\theoremstyle{change}
\newcommand{\A}{{\mathbb A}}
\newcommand{\Q}{{\mathbb Q}}
\newcommand{\Z}{{\mathbb Z}}
\newcommand{\R}{{\mathbb R}}
\newcommand{\C}{{\mathbb C}}
\newcommand{\p}{\mathfrak p}
\newcommand{\OF}{{\mathfrak o}}
\newcommand{\GL}{{\rm GL}}
\newcommand{\SL}{{\rm SL}}
\newcommand{\SO}{{\rm SO}}
\newcommand{\GSp}{{\rm GSp}}
\newcommand{\sgn}{{\rm sgn}}
\newcommand{\St}{{\rm St}}
\newcommand{\I}{{\rm I}}
\renewcommand{\P}{\mathfrak{P}}
\newcommand{\aaa}{{\bf a}}
\newcommand{\bb}{{\bf b}}
\newcommand{\cc}{{\bf c}}
\newcommand{\dd}{{\bf d}}
\newcommand{\mat}[4]{{\setlength{\arraycolsep}{0.5mm}\left[
\begin{array}{cc}#1&#2\\#3&#4\end{array}\right]}}
\newcommand{\qed}{\hspace*{\fill}\rule{1ex}{1ex}}
\newcommand{\forget}[1]{}
\def\qdots{\mathinner{\mkern1mu\raise0pt\vbox{\kern7pt\hbox{.}}\mkern2mu
\raise3.4pt\hbox{.}\mkern2mu\raise7pt\hbox{.}\mkern1mu}}
\newenvironment{proof}{\vspace{1ex}\noindent{\it Proof.}\hspace{0.1em}}
	{\hfill\qed\vspace{2ex}}
\newtheorem{lemma}{Lemma.}[section]
\newtheorem{theorem}[lemma]{Theorem.}
\newtheorem{corollary}[lemma]{Corollary.}
\newtheorem{proposition}[lemma]{Proposition.}
\newtheorem{remark}[lemma]{Remark.}
\begin{document}
\title{Restrictions of Eisenstein series and Rankin-Selberg convolution}
\author{Rodney Keaton,  Ameya Pitale}
\date{}
\maketitle
\tableofcontents

\section{Introduction}
\subsection{Triple product $L$-functions}
Central values of $L$-functions play an important role in number theory. If $\pi_1, \pi_2$ and $\pi_3$ are three cuspidal automorphic representations of $\GL(2,\A_F)$, for a number field $F$, then one can consider the central value $L(1/2, \pi_1 \times \pi_2 \times \pi_3)$ of the degree $8$ triple product $L$-function. It was conjectured by Jacquet that this central value is non-zero if and only if there is a quaternion algebra $D$ over $F$ such that 
$$\int\limits_{\A_F^\times D^\times(F) \backslash D^\times(\A_F)} f_1^D(x) f_2^D(x) f_3^D(x) \, d^\times x \neq 0.$$
Here, $f_i^D$ are cusp forms in $\pi_i^D$, the cuspidal automorphic representation of $D^\times(\A)$ obtained by Jacquet-Langlands correspondence from $\pi_i$. This was proven by Harris and Kudla \cite{HK}. One can look at the more general situation of a cubic extension $E$ of $F$ and consider an irreducible cuspidal automorphic representation $\Pi$ of $\GL(2,\A_E)$. In this context, the analogue of Jacquet's conjecture has been proven by Prasad and Schulze-Pillot in \cite{PSP}. Explicit formulas relating the $L$-value and the integral have been obtained by several authors (see \cite{BSP}, \cite{GK}, \cite{I}, \cite{NPS} and \cite{W}).

In this paper, we look at the special case of $E = L \times F$, where $L$ is a quadratic extension of $F$. We consider a cuspidal automorphic representation $\pi$ on $\GL(2, \A_F)$ and an induced representation ${\rm Ind_B^{\GL(2)}}(\Omega_1, \Omega_2)$ on $\GL(2, \A_L)$. Here, $\Omega_1, \Omega_2$ are characters in $L^\times \backslash \A_L^\times$ and $B$ is the Borel subgroup of $\GL(2)$. Given a smooth section $f$ in the induced representation, let $E(g,s; f)$  be the standard Eisenstein series (see (\ref{Eis-ser-defn})). For $\phi \in \pi$, we consider the pullback of the Eisenstein series given by the integral 
\begin{equation}\label{Int-defn-intro}
Z(s, f, \phi) = \int\limits_{\A_F^\times \GL(2,F) \backslash \GL(2, \A_F)} E(h,s;f) \phi(h) dh.
\end{equation}
We compute the above integral under certain  assumptions on the ramification of the local representations and characters. We assume that $\pi_\infty$ is a holomorphic discrete series. For an unramified place $v$, we assume that $\pi_v$ is either unramified or an unramified twist of the Steinberg representation. Let $\Omega_1, \Omega_2$ be such that $c(\Omega_{2,v}) = 0$ and  $c(\Omega_{1,v}) = 0$ or $1$. In the latter case,  $\pi_v$ is ramified. Also, assume that the archimedean components of the characters match with the archimedean component of $\pi$ (See Section \ref{global-section} for details). It turns out that $Z(s,f,\phi) = 0$ if the central $L$-value $L(1/2, {\rm BC}(\pi) \times \Omega) =0$. Here, ${\rm BC}(\pi)$ is the base change of $\pi$ to $\GL(2, \A_L)$ and $\Omega$ is the character on $\A_L^\times$ defined by $\Omega(z) := \Omega_1^{-1}(\bar{z}) \Omega_2^{-1}(z)$. This vanishing condition follows from the criteria for existence of Waldspurger models and we will explain it in the next section.  Assuming $L(1/2, {\rm BC}(\pi) \times \Omega)  \neq 0$, we can choose $f$ and $\phi$ (see Section \ref{global-section}) so that
\begin{equation}\label{Global-formula-intro}
Z(s, f, \bar\phi) = \frac{L(2s+\frac 12, \tilde\pi \times \Omega_1|_{\A^\times})}{L(2s+1, \Omega_1 \Omega_2^{-1})} \prod\limits_{p \leq \infty} Y_p(s).
\end{equation}
Here, $\tilde\pi$ is the contragredient representation of $\pi$. The values of $Y_p(s)$ are explicitly computed and, for almost all finite $p$, the term $Y_p(s) = 1$. The exact value of $Y_p(s)$ is given in Theorem  \ref{global-int-thm}.

\subsection{Waldspurger models}
Unwinding the integral (\ref{Int-defn-intro}), we can deduce that $Z(s, f, \phi)$ is Eulerian from the following formula.
$$Z(s, f, \phi) = \int\limits_{T(\A_F)\backslash \GL(2, \A_F)} f(\eta h,s) B_\phi(h) dh.$$
Here, $\eta$ is the non-trivial representative of $B(L) \backslash \GL(2,L) / \GL(2,F)$. The torus $T(F)$ is the subgroup of $\GL(2, F)$ isomorphic to $L^\times$.  $B_\phi$ is the period defined by
$$B_\phi(g) := \int\limits_{Z(\A_F)T(F) \backslash T(\A_F)} \phi(tg)\Omega^{-1}(t)dt.$$
It has been shown in \cite{Wald85} that a necessary condition for $B_\phi$ to be non-zero is that $L(1/2, {\rm BC}(\pi) \times \Omega) \neq 0$. We assume this non-vanishing condition. The map $\phi \mapsto B_\phi$ gives a global $\Omega$-Waldspurger model for $\pi$. The Waldspurger model gives a realization of the representation $\pi$ in terms of $\C$-valued functions on the group, which transform by the character $\Omega$ upon left translation by the torus $T$. 

We can now choose a factorizable section $f(h, s) = \prod_v f_v(h_v, s)$, and the uniqueness of the Waldspurger models allows us to write $B_\phi(h) = \prod_v B_v(h_v)$. This gives us $Z(s, f, \phi) = \prod_v Z_v(s)$, where 
\begin{equation}\label{local-int-intro}
Z_v(s) = \int\limits_{T(F_v) \backslash \GL(2, F_v)} f_v(\eta h_v, s) B_v(h_v) dh_v.
\end{equation}

\subsection{Explicit formulas for new-forms in local Waldspurger models}
The key to computing (\ref{local-int-intro}) is choosing appropriate local vectors $f_v$ and $B_v$. There are two reasonable choices for $B_v$ in the non-archimedean case -- the new-form or the Gross-Prasad test vector. For the application that we have in mind towards the conjectures of Tonghai Yang, we will choose the new-form. For more on the Gross-Prasad test vectors see \cite{FMP}. In the archimedean case, we will assume that $\pi_\infty$ is a holomorphic discrete series with lowest non-negative weight $\ell$. The vector $B_\infty$ will be chosen to be the weight $\ell$ vector. The choice for $f_v$ is more straightforward. We choose the vector in the local induced representation that is right invariant under an appropriate compact subgroup so that the integral $Z_v$ is not trivially zero. 

In order to actually compute $Z_v$ we need explicit formulas for the local vectors $B_v$. One of the main contributions of this paper is explicit formulas for certain distinguished vectors in the Waldspurger models for local representations of $\GL(2)$. 

{\it Unramified non-archimedean case}: When $\pi_v$ is unramified, we obtain explicit formulas for the spherical vector $B_0$ in the Waldspurger model. The vector is determined by its values on $\{ \mat{\varpi^m}{}{}{1} : m \geq 0\}$. We use the fact that the spherical vector is an eigenfunction of the local Hecke algebra to get recurrence relations on the above values. This allows us to obtain 
$$\sum\limits_{m \geq c(\Omega)} B_0(\mat{\varpi^m}{}{}{1}) x^m = \frac{(q-\kappa x)x^{c(\Omega)}}{\omega_\pi(\varpi)x^2- \lambda x + q}B_0(\mat{\varpi^{c(\Omega)}}{}{}{1}).$$
Here, $\lambda$ is the eigenvalue of $B_0$. Also, $\kappa$ is an explicit constant depending on the conductor $c(\Omega)$ and the ramification of $L_v/F_v$. See Proposition \ref{main-result-prop} for details. Note that the unramified computations put no restriction on the character $\Omega$ or the field extension $L$. This extends results of \cite{BFF}. 

{\it Ramified non-archimedean case}: We obtain the explicit formulas for the new-form in the Waldspurger model for the twist of the Steinberg representation of $\GL(2)$ by an unramified character $\chi_v$. When $L_v/F_v$ is a field extension, this was done in \cite{FMP}. We compute the remaining case when $L_v = F_v \oplus F_v$. Note that a necessary and sufficient condition for a local Waldspurger model to exist is that $\Omega_v \neq \chi_v \circ N_{L_v/F_v}$. We use the fact that the new-form is right invariant under the Iwahori subgroup and is an eigenfunction of the Atkin-Lehner operator and the Hecke operator.

{\it Archimedean case}: We assume that $\pi_\infty$ is the holomorphic discrete series of $\GL(2,\R)$ with lowest non-negative weight $\ell$. We compute the explicit formulas for the weight $\ell$ vector $B_0$ in the Waldspurger model for $\pi_\infty$. The key property of $B_0$ is that it is annihilated by the lowering operator in the complexified Lie algebra of $\SL(2, \R)$. We consider the action of the lowering operator on vectors in $\pi_\infty$. The criteria that $B_0$ is annihilated by the lowering operator reduces to a first order linear ordinary differential equation satisfied by $B_0$. This leads to the explicit formulas in both the cases when $L_\infty$ is split or non-split over $F_\infty = \R$. In the split case, we use these formulas to compute the local archimedean integral $Z_\infty(s)$ as follows
\begin{equation}\label{Arch-int-intro}
Z_\infty(s) = \begin{cases} iD^{-1/2} \pi & \text{ if } \ell = 2, s= 0; \\ 
2^{2-2s-\ell_2} D^{-\frac{\ell}4-s} i^{\frac{\ell}2} \pi \frac{\Gamma(2s+\frac{\ell}2-1)}{\Gamma(s) \Gamma(\frac{\ell}2+s)} & \text{ if } {\rm Re}(2s+\frac{\ell}2) > 1.\end{cases}
\end{equation}
Here, $\ell_2$ depends on $\Omega_\infty$ and $D$ is the fundamental discriminant for $L/F$. We do the split computation here because we want to apply this to the case of Hilbert modular forms. If one considers Bianchi modular forms, then one can use the explicit formulas for the weight vector in the non-split case to compute the integral as well. In that case, one has to deal with the further complication that the maximal compact is not abelian and hence, we have $K$-types of higher dimensions. We have not done that case here.

The local computations in the archimedean and non-archimedean case lead to the calculation of the local integrals $Z_v(s)$ leading to the formula (\ref{Global-formula-intro}).

\subsection{Application to Tonghai Yang's conjectures}
In \cite{Yang}, Yang constructs a Hilbert Eisenstein series  $\mathcal E((\tau_1, \tau_2), s, f)$ over a totally real extension $L/\Q$ associated to an imaginary quadratic extension $K/L$. This involves choosing the characters $\Omega_1 = \chi_{K/L}$, the character corresponding to the extension $K/L$ by class field theory, $\Omega_2 = 1$, and choosing a square-free ideal $\mathcal N$. As a function of $(\tau_1, \tau_2) \in \mathcal H^2$, Yang shows that 
$\mathcal E((\tau_1, \tau_2), s, f)$ is a Hilbert Eisenstein series  of weight $(1,1)$, of square-free level $\mathcal N d_{K/L}$ and Nebentypus character corresponding to $\chi_{K/L}$. He gives explicit formulas for the Fourier coefficients of $\mathcal E((\tau_1, \tau_2), s, f)$, a criteria for non-vanishing and shows that it is holomorphic for $s=0$. By restriction to the diagonal, we get that $\mathcal E((\tau, \tau), 0, f)$ is a holomorphic modular form of weight $2$, square-free level $N$ and Nebentypus character $\psi$. Here, $N$ and $\psi$ depend on $K, L$ and $\mathcal N$. 

By allowing $K$ and $L$ to vary subject to certain conditions, Yang obtains a family of such Hilbert Eisenstein series. He conjectures that the restriction of these Eisenstein series to the diagonal forms a spanning set for the space of holomorphic modular forms of weight $2$, level $N$ and Nebentypus character $\psi$. One of the key steps towards this conjecture is the following --

\emph{Given a cusp form $\Phi \in S_2(N, \psi)$, does there exist a choice of $K, L, \mathcal N$, such that the corresponding Hilbert Eisenstein series $\mathcal E((\cdot, \cdot), s, f)$ satisfies}
$$\langle \mathcal E((\cdot, \cdot), s, f)|_{\Delta \mathcal H}, \Phi \rangle \neq 0.$$
Here, $\langle \,,\, \rangle$ is the Petersson inner product. Suppose $\Phi$ is a Hecke eigenform, then let $\phi$ be the function on $\GL(2, \A)$ corresponding to $\Phi$ and let $\pi$ be the irreducible, cuspidal automorphic representation of $\GL(2, \A)$ corresponding to $\Phi$. Let $E(g, s, f)$ be the Eisenstein series on $\GL(2, \A_L)$ corresponding to $\mathcal E$. Then, we show in Proposition \ref{Petersson-norm-prop} that 
\begin{equation}\label{innerprod-reln-intro}
Z(s, f, \bar\phi) = {\rm vol}(\Gamma_0(N) \backslash \mathcal H) \langle \mathcal E((\cdot, \cdot), s, f)|_{\Delta \mathcal H}, \Phi \rangle.
\end{equation}
Using (\ref{Global-formula-intro}), we get, in Corollary \ref{classical-cor}
$$\langle \mathcal E((\cdot, \cdot), 0, f)|_{\Delta \mathcal H}, \Phi \rangle \neq 0 \text{ if and only if } L(1/2, \pi) \neq 0 \text{ and } L(1/2, {\rm BC}(\pi) \times \chi_{K/L}) \neq 0.$$
If $L(1/2, \pi) = 0$, we immediately get that $\Phi$ cannot be in the span of the Hilbert Eisenstein series. In case $L(1/2, \pi) \neq 0$, then using the results of Friedberg and Hoffstein in \cite{FH}, one can obtain characters $\chi_{K/L}$ such that $L(1/2, {\rm BC}(\pi) \times \chi_{K/L}) \neq 0$. If we expand the family of Hilbert Eisenstein series by allowing more general choices of $\Omega_1, \Omega_2$, then the criteria of non-vanishing of the inner product changes from $L(1/2, \pi) \neq 0$ to a twist $L(1/2, \pi \times \chi) \neq 0$, for a suitable character $\chi$. Again using \cite{FH}, there is now a chance to achieve this. This is the advantage of computing the global integral $Z(s, f, \phi)$ for as general a choice of data as possible. 

The formula (\ref{innerprod-reln-intro}) relating the inner product to the global integral is the reason for choosing new-forms for local vectors in the Waldspurger models of the local representations. Also, we have not considered highly ramified local representations $\pi_v$ because they do not appear in considerations for the application to Tonghai Yang's conjecture. 

Let us also remark that it is not possible to extend the ideas of Tonghai Yang in a naive manner to obtain spanning sets for modular forms of weight $\ell > 2$. This is because (\ref{Arch-int-intro}) easily gives us $Z_\infty(0) = 0$ for $s = 0$ and $\ell > 2$. 

\subsection{Previous work}
Observe that the computations mentioned above work only when the holomorphic cusp form $\Phi$ is a Hecke eigenform. Even if we get non-vanishing of Petersson inner product for all Hecke eigenforms, it does not imply non-vanishing for non-Hecke eigenforms. In a certain special case, Yingkun Li in \cite{Li} has obtained a complete answer. Fix an odd, square-free integer $N$. Consider any two coprime, negative, fundamental discriminants $d_1, d_2$ such that $\Big(\frac{d_1}p\Big) = \Big(\frac{d_2}p\Big) = -1$, for all $p | N$. Let $K = \Q(\sqrt{d_1}, \sqrt{d_2})$ and $L = \Q(\sqrt{d_1d_2})$ and  let $\mathcal N$ be a square-free ideal in $L$ with an odd number of prime divisors such that $\mathcal N \cap \Z = N \Z$. Li proves that the span of the restriction of the Hilbert Eisenstein series corresponding to $d_1, d_2, \mathcal N$, varying under the above restrictions, is precisely the space spanned by the Eisenstein series $E_{2,N} \in M_2(N)$ and all cuspidal eigenforms $\Phi \in M_2(N)$ satisfying $L(1/2, \Phi) \neq 0$. The key to obtaining this result is once again the computation of the Petersson inner product. In this particular case, Li makes use of the explicit Fourier coefficients of the Hilbert Eisenstein series to show that the restriction is a Shimura lift of a weight $3/2$ modular form. This leads to an explicit formula for the inner product in terms of the central value of the $L$-function and certain Fourier coefficients of the half integral weight modular form. 

These classical methods cannot be easily generalized to other choices of $K, L, \mathcal N$ from Yang's conjectures. In fact, a question from Li to compute the inner product in an adelic setting was the starting point of this current paper. 

It should be remarked that this inner product has been considered in \cite{Hu}. In \cite{Hu} the author also considers certain non-squarefree level cases by choosing the Gross-Prasad test vector. Let us point out that the computation technique in \cite{Hu} is completely different to that used in the present paper. 

\subsection{Structure of the paper}

In Section \ref{Prelim}, we introduce our basic objects of study as well as state and prove that the global integral is Eulerian. In Section \ref{ExplicitValues} we present the calculation of explicit values of a new vector in the Waldspurger model of an unramified principal series (Section \ref{ExplicitValuesUnram}) and in the Waldspurger model of an unramified twist of a Steinberg representation (Section \ref{ExplicitValuesSteinberg}). In Section \ref{IntegralCalculation} we perform the local integral calculations needed for our inner product. In Section \ref{ExplicitValuesArch} we present the analogous explicit values of the Waldspurger model at the archimedean places, and also compute the local inner product in the split case, i.e., when the quadratic field extension is totally real. Finally, in Section \ref{GlobalIntegral}, we combine our local calculations to obtain the calculation of the global integral. Also, in this section we relate our integral calculation to the inner product mentioned above and give a case which is relevant to the conjectures of Yang.

\section{Preliminaries}\label{Prelim}
\subsection{Eisenstein series and Waldspurger models}
Let $F$ be a number field. Let $\aaa,\bb,\cc \in F$ such that $\dd:=\bb^2-4\aaa\cc \neq 0$. Let $L = F(\sqrt{\dd})$ be a subfield of $\C$. Let $\A$ be the ring of adeles of $F$ and $\A_L$ be the ring of adeles of $L$. Let $H$ be defined by $H(R) = \GL_2(R)$ for a ring $R$. Let $B$ be the standard Borel subgroup of $H$. Let $\pi$ be an irreducible cuspidal automorphic representation of $H(\A)$ with central character $\omega_\pi$. Let $\Omega_1, \Omega_2$ be characters of $\A_L^\times/L^\times$ such that $\Omega_1 \Omega_2 |_{\A^\times} = \omega_\pi^{-1}$. For $s \in \C$, let $I(\Omega_1, \Omega_2, s) = {\rm Ind}_{B(\A_L)}^{H(\A_L)}(\Omega_1, \Omega_2, \delta_B^s)$. Here, $\delta_B$ is the modulus character $\delta_B(\mat{u}{v}{}{w}) = |u/w|_{\A_L}$. Hence, for $f \in I(\Omega_1, \Omega_2, s)$, we have
\begin{equation}\label{ind-rep-defn}
f(\mat{u}{v}{}{w} g, s) = \Omega_1(u) \Omega_2(w) |u/w|_{\A_L}^{s+1/2}f(g,s).
\end{equation}
For any section $f \in I(\Omega_1, \Omega_2, s)$, define the Eisenstein series
\begin{equation}\label{Eis-ser-defn}
E(g,s) = E(g,s; f) = \sum\limits_{\gamma \in B(L)\backslash H(L)} f(\gamma g, s).
\end{equation}
This series is absolutely convergent for ${\rm Re}(s) > 1/2$ and has a meromorphic continuation to all of $\C$ (see \cite{L-Eisenstein}). 

For $\aaa,\bb,\cc$ as above, set
$$S = \mat{\aaa}{\bb/2}{\bb/2}{\cc}, \text{ and } \xi=\mat{\frac{\mathbf{b}}2}{\mathbf{c}}{-\mathbf{a}}{\frac{-\mathbf{b}}2}.
$$
Let $F(\xi) = \{x I_2+y \xi : x, y \in F\} \subset M_2(F)$. We have the isomorphism
$$F(\xi) \ni x I_2+y\xi\mapsto
x+y\frac{\sqrt{\mathbf{d}}}2 \in L.$$
Let 
$$T(F) = \{g \in H(F) : {}^{t}gSg = \det(g)S\}.$$
Then $T(F) = F(\xi)^\times$ and hence, $T(F) \simeq L^\times$.  Note that $T(F)$
consists of all matrices
\begin{equation}\label{TFgeq1}
g = t(x,y)=\mat{x+y\frac{\mathbf{b}}2}{\mathbf{c}y}{-\mathbf{a}y}{x-y\frac{\mathbf
{b}}2},\qquad x, y \in F, \,
\det(g)=x^2-\frac14y^2(\mathbf{b}^2-4\mathbf{a}\mathbf{c})\neq0.
\end{equation}
Let $\Omega$ be a character of $T(\A)/T(F) \simeq \A_L^\times/L^\times$ defined by
\begin{equation}\label{Omega-defn}
\Omega(z) := \Omega_1^{-1}(\bar{z}) \Omega_2^{-1}(z), \qquad \text{ for all } z \in \A_L^\times.
\end{equation}
Hence,  $\Omega |_{\A^\times} = \omega_\pi$. For $\phi \in V_\pi$, define
\begin{equation}\label{global-wald-defn}
B_\phi(g) = \int\limits_{Z_H(\A)T(F) \backslash T(\A)} \phi(tg)\Omega^{-1}(t)dt.
\end{equation}
The $\C$-vector space spanned by $\{B_\phi : \phi \in V_\pi\}$ is called the global Waldspurger model of $\pi$ of type $(S, \Omega)$. The uniqueness and criteria for existence for having such a Waldspurger model is known by \cite{Sai}, \cite{Tu}, and \cite{Wald85}. We will assume that such a Waldspurger model exists.

Let $\phi \in V_\pi$. We wish to study the integral
\begin{equation}\label{global-integral-defn}
Z(s) = Z(s, f, \phi) = \int\limits_{H(F)Z_H(\A)\backslash H(\A)} E(h,s;f) \phi(h) dh.
\end{equation}

\subsection{Basic Identity}
The first step is to show that the above integral is Eulerian. Using the Bruhat decomposition of $\GL(2)$, we get the following lemma.
\begin{lemma}\label{double-coset-decomp}
The representatives for the double cosets $B(L)\backslash H(L)/H(F)$ are given by $ I_2$ and $\eta=\mat{1}{}{\beta}{1}$, with $\beta = (\bb+\sqrt{\dd})/(2\cc)$.
\end{lemma}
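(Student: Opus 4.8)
The plan is to analyze the double cosets $B(L)\backslash H(L)/H(F)$ by a Bruhat-type argument, keeping careful track of the quadratic extension $L/F$. Recall that $H(L) = \GL_2(L)$, that $B$ is the standard Borel, and that $H(F) = \GL_2(F)$ sits inside $H(L)$ via the inclusion $F \subset L$. The guiding principle is that $B(L)\backslash H(L)$ is the projective line $\mathbb{P}^1(L)$, on which $H(L)$ acts by fractional linear transformations and on which $B(L)$ is the stabilizer of the point at infinity. So the double coset space $B(L)\backslash H(L)/H(F)$ is identified with the orbit space $\mathbb{P}^1(L)/H(F)$, where $H(F) = \GL_2(F)$ acts through its embedding.

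First I would make this identification precise: send $B(L)gH(F) \mapsto$ the $H(F)$-orbit of the line $[v]$ where $v = g^{-1} e_2$ (or $g \cdot \infty$, depending on convention), so that the problem reduces to classifying $\GL_2(F)$-orbits on $\mathbb{P}^1(L)$. The key observation is that $\GL_2(F)$ acts transitively on $\mathbb{P}^1(F) \subset \mathbb{P}^1(L)$, giving one orbit — this yields the representative $I_2$. Any point of $\mathbb{P}^1(L)$ not in $\mathbb{P}^1(F)$ is of the form $[\alpha : 1]$ with $\alpha \in L \setminus F$; since $L = F(\sqrt{\dd})$ has degree $2$, such an $\alpha$ generates $L$ over $F$, and one checks that $\GL_2(F)$ acts transitively on these points as well: given $\alpha, \alpha' \in L \setminus F$, the map sending the $F$-basis $\{1,\alpha\}$ of $L$ to $\{1, \alpha'\}$ (suitably normalized) is realized by an element of $\GL_2(F)$ acting on $\mathbb{P}^1(L)$. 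This gives the second orbit, represented by $\eta = \mat{1}{}{\beta}{1}$ for an appropriate $\beta$; the explicit value $\beta = (\bb + \sqrt{\dd})/(2\cc)$ comes from choosing $\eta$ so that the line $\eta \cdot \infty$ corresponds to a convenient generator of $L/F$ — indeed $\beta$ is (up to normalization) the ratio attached to the eigenvector of $\xi$, tying the representative to the torus $T(F) = F(\xi)^\times$. Finally I would verify the two cosets are genuinely distinct, which is immediate since $\mathbb{P}^1(F)$ is $\GL_2(F)$-stable and $\beta \notin F$ (as $\dd$ is not a square in $F$, because $L/F$ is a quadratic field extension).

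The main obstacle is not conceptual but bookkeeping: one must match the abstract orbit representatives with the specific matrix $\eta$ and the specific scalar $\beta$ stated in the lemma, and confirm that this $\beta$ is the one making $\eta$ compatible with the subsequent unfolding of the integral (\ref{global-integral-defn}) — i.e., that $\eta^{-1} T(F) \eta$ normalizes things appropriately. Concretely this amounts to checking that the vector $(1, \beta)^t$ spans an eigenline for the action of $\xi$ (equivalently, that $\beta$ solves $-\aaa + \beta(-\bb/2) = \beta(\bb/2 + \beta\cc) \cdot(\text{scalar})$, which rearranges to $\cc\beta^2 - \bb\beta + \aaa \cdot(\dots)$ giving $\beta = (\bb+\sqrt{\dd})/(2\cc)$), a short quadratic-formula computation. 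I would present the orbit argument cleanly and relegate this verification to a direct calculation. Note also that the degenerate case where $\dd$ is a square is excluded here since we assumed $L$ is a field; when $L = F \oplus F$ the analysis changes, but that is treated separately in the paper.
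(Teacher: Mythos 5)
Your proof is correct and is essentially the detailed version of what the paper merely alludes to with ``Using the Bruhat decomposition of $\GL(2)$'': the Bruhat decomposition identifies $B(L)\backslash H(L)$ with $\mathbb{P}^1(L)$, and the lemma is exactly the statement that $\GL_2(F)$ has two orbits there, namely $\mathbb{P}^1(F)$ and its complement, which is what you show. One small sign slip in your closing remark: the relevant eigenvector of $\xi$ is $(1,-\beta)^t=\eta^{-1}e_1$, not $(1,\beta)^t$ (this is the vector used in the paper's Lemma \ref{Delta0=T}), though this is immaterial to the lemma itself since any $\beta\in L\setminus F$ serves as a representative of the second double coset.
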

Let us denote by $\Delta(F) = B(L) \cap H(F)$ and $\Delta_0(F) = \eta^{-1}B(L)\eta \cap H(F)$, subgroups of $H(F)$. 
\begin{lemma}\label{Delta0=T}
We have
$$\Delta_0(F) = T(F).$$
\end{lemma}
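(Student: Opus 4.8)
The plan is to establish both inclusions $T(F)\subseteq\Delta_0(F)$ and $\Delta_0(F)\subseteq T(F)$ by a direct matrix computation with $\eta=\mat{1}{}{\beta}{1}$, the only algebraic inputs being the minimal equation of $\beta$ over $F$ and the fact that $1,\beta$ span $L$ over $F$. As a preliminary, note that $\cc\neq 0$ (implicit in the formula $\beta=(\bb+\sqrt{\dd})/(2\cc)$), and that squaring $2\cc\beta-\bb=\sqrt{\dd}$ and substituting $\dd=\bb^2-4\aaa\cc$ yields the quadratic relation $\cc\beta^2-\bb\beta+\aaa=0$; since $\beta\notin F$, the pair $\{1,\beta\}$ is an $F$-basis of $L$, so any relation $\alpha+\beta\alpha'=0$ with $\alpha,\alpha'\in F$ forces $\alpha=\alpha'=0$.

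For the inclusion $T(F)\subseteq\Delta_0(F)$: by (\ref{TFgeq1}) every element of $T(F)$ has the form $t(x,y)=xI_2+y\xi$ with $x,y\in F$ and $\det\neq 0$, so it is enough to check that $\eta\xi\eta^{-1}$ is upper triangular over $L$; for then $\eta\,t(x,y)\,\eta^{-1}=xI_2+y\,\eta\xi\eta^{-1}$ is upper triangular and invertible, hence lies in $B(L)$, which gives $t(x,y)\in\eta^{-1}B(L)\eta\cap H(F)=\Delta_0(F)$. Multiplying out, the $(2,1)$-entry of $\eta\xi\eta^{-1}$ equals $-(\cc\beta^2-\bb\beta+\aaa)$, which is $0$ by the preliminary remark.

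For the inclusion $\Delta_0(F)\subseteq T(F)$: let $g=\mat{p}{q}{r}{t}\in H(F)$ satisfy $\eta g\eta^{-1}\in B(L)$, i.e. the $(2,1)$-entry of $\eta g\eta^{-1}$ vanishes. That entry works out to $\beta p+r-\beta^2 q-\beta t$; replacing $\beta^2 q$ via $\cc\beta^2=\bb\beta-\aaa$ and multiplying through by $\cc$ rewrites the condition as $\beta(\cc p-\bb q-\cc t)+(\cc r+\aaa q)=0$, whence by the preliminary remark $\cc r+\aaa q=0$ and $\cc(p-t)=\bb q$. Setting $y:=q/\cc\in F$ and $x:=(p+t)/2\in F$, these two equations say precisely that $q=\cc y$, $r=-\aaa y$, $p=x+\tfrac{\bb}{2}y$, $t=x-\tfrac{\bb}{2}y$, i.e. $g=t(x,y)$ in the notation of (\ref{TFgeq1}); since $g$ is invertible, $g\in T(F)$.

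Everything here is a routine $2\times 2$ computation, so there is no genuine obstacle; the only point to watch is that $\beta$ is \emph{quadratic} over $F$, so one must first reduce $\beta^2$ using $\cc\beta^2=\bb\beta-\aaa$ and only afterwards separate the $1$- and $\beta$-components, which is legitimate because $p,q,r,t\in F$. (Alternatively, one can argue conceptually: $\xi$ has eigenvectors $(1,-\beta)^t$ and its Galois conjugate $(1,-\bar\beta)^t$, and $\eta^{-1}B(L)\eta$ is the stabilizer of the line through $(1,-\beta)^t$; an $F$-rational matrix fixing that line also fixes its conjugate, hence is simultaneously diagonalized by this pair and therefore lies in $F(\xi)=\{xI_2+y\xi\}$, giving $\Delta_0(F)=F(\xi)^\times=T(F)$.)
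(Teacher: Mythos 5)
Your proof is correct and takes essentially the same approach as the paper: a direct $2\times 2$ matrix computation that exploits the $F$-linear independence of $\{1,\beta\}$ (equivalently $\{1,\sqrt{\dd}\}$) to split the vanishing of the $(2,1)$-entry of $\eta g\eta^{-1}$ into two $F$-rational equations, which then pin down $g$ as $t(x,y)$. The paper phrases the same computation via the eigenvector $\eta^{-1}e_1$ (i.e.\ $h\eta^{-1}e_1 = \gamma\,\eta^{-1}e_1$ with $\gamma = x+y\sqrt{\dd}$) and leaves the reverse inclusion to the reader, while you handle both inclusions explicitly and add a clean conceptual remark; the differences are cosmetic.
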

\begin{proof}
Let $e_1={}^{t}[1,0]$ and $e_2={}^{t}[0,1]$. Let $h \in \Delta_0$. Hence, $h\eta^{-1}e_1 = \gamma \eta^{-1}e_1$ for some $\gamma \in L^\times$, since $B(F)$ fixes the line generated by $e_1$. Let $\gamma = x+y\sqrt{\dd}$ for $x, y \in F$. We have $\eta^{-1} e_1 = e_1 - \beta e_2$. Hence $he_1 - \beta he_2 = (x+y\sqrt{\dd}) e_1 - (x+y\sqrt{\dd}) \beta e_2$. Since $h \in H(F)$, we get two equations by comparing the coefficient of $\sqrt{d}$ and the coefficient of $1$ on both sides. This gives us $he_1 = (x-\bb y) e_1 + 2 \aaa y e_2$ and $he_2 = -2\cc y e_1 + (x+\bb y)e_2$. Hence, $h = x-2y\xi \in T(F)$. The reverse implication can also be worked out similarly.
\end{proof}

By Lemmas \ref{double-coset-decomp} and \ref{Delta0=T}, we have
$$E(g,s;f) = \sum\limits_{\gamma \in B(L)\backslash H(L)} f(\gamma g, s) = \sum\limits_{\gamma \in \Delta(F)\backslash H(F)} f(\gamma g, s) + \sum\limits_{\gamma \in T(F)\backslash H(F)} f(\eta \gamma g, s).$$
Hence
$$Z(s) = \int\limits_{\Delta(F) Z_H(\A) \backslash H(\A)} f(h) \phi(h) dh + \int\limits_{T(F) Z_H(\A) \backslash H(\A)} f(\eta h) \phi(h) dh.$$
Using cuspidality of $\pi$, we have
$$\int\limits_{\Delta(F) Z_H(\A) \backslash H(\A)} f(h) \phi(h) dh = 0.$$
Thus,
$$Z(s)=\int\limits_{T(F) Z_H(\A) \backslash H(\A)} f(\eta h) \phi(h) dh,$$
which will be needed in the proof of the following proposition.
\begin{proposition}\label{Basic-identity-prop}
Let $\pi$ be an irreducible cuspidal automorphic representation of $\GL_2(\A)$ with central character $\omega_\pi$. Let $\Omega_1, \Omega_2$ be characters of $\A_L^\times/L^\times$ such $\Omega_1 \Omega_2 |_{\A^\times} = \omega_\pi$. Let $f \in I(\Omega_1, \Omega_2, s)$ and $\phi \in V_\pi$. Then we have
$$Z(s, f ,\phi) = \int\limits_{H(F)Z_H(\A)\backslash H(\A)} E(h,s;f) \phi(h) dh =  \int\limits_{T(\A)\backslash H(\A)} f(\eta h,s) B_\phi(h) dh.$$
Here, $B_\phi$ is as defined in (\ref{global-wald-defn}) with $\Omega$ defined in (\ref{Omega-defn}). Also, $\eta = \mat{1}{}{\beta}{1}$ with $\beta = (\bb+\sqrt{\dd})/(2 \cc)$. 
\end{proposition}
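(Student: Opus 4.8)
The plan is to pick up from the identity already derived just before the statement, namely
$$Z(s,f,\phi)=\int\limits_{T(F)Z_H(\A)\backslash H(\A)} f(\eta h,s)\,\phi(h)\,dh,$$
and to \emph{unfold} the quotient $T(F)Z_H(\A)\backslash H(\A)$ against the definition of $B_\phi$. Since $Z_H\subset T$, one has $Z_H(\A)T(\A)=T(\A)$, so $T(F)Z_H(\A)\backslash H(\A)$ fibers over $T(\A)\backslash H(\A)$ with fibre $T(F)Z_H(\A)\backslash T(\A)$. Choosing compatible Haar measures, the integral above becomes
$$\int\limits_{T(\A)\backslash H(\A)}\Big(\int\limits_{T(F)Z_H(\A)\backslash T(\A)} f(\eta t h,s)\,\phi(th)\,dt\Big)\,dh.$$

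The key point is the left-translation behaviour of $f(\eta\,\cdot\,,s)$ under $t\in T(\A)$. Because $\eta$ is the representative from Lemma \ref{double-coset-decomp} and $\Delta_0(F)=T(F)$ by Lemma \ref{Delta0=T}, the torus $T$ is conjugated by $\eta$ into $B$ over $L$; repeating the eigenvector computation from the proof of Lemma \ref{Delta0=T} shows that if $z\in\A_L^\times$ corresponds to $t\in T(\A)$ under $T(\A)\simeq\A_L^\times$, then $\eta t\eta^{-1}=\mat{\bar z}{*}{}{z}\in B(\A_L)$. Since $|z|_{\A_L}=|\bar z|_{\A_L}$, the modulus factor in (\ref{ind-rep-defn}) is trivial, so by (\ref{ind-rep-defn}) and the definition (\ref{Omega-defn}) of $\Omega$,
$$f(\eta t h,s)=\Omega_1(\bar z)\,\Omega_2(z)\,f(\eta h,s)=\Omega^{-1}(t)\,f(\eta h,s).$$
Substituting this into the inner integral and pulling $f(\eta h,s)$ out leaves exactly $\int_{Z_H(\A)T(F)\backslash T(\A)}\phi(th)\,\Omega^{-1}(t)\,dt=B_\phi(h)$, which is (\ref{global-wald-defn}); here one checks as usual that $\phi(th)\Omega^{-1}(t)$ is genuinely $Z_H(\A)T(F)$-invariant in $t$, using left $H(F)$-invariance of $\phi$, the relation $\phi(zg)=\omega_\pi(z)\phi(g)$ for $z\in Z_H(\A)$, triviality of $\Omega$ on $T(F)$, and the compatibility of $\Omega|_{\A^\times}$ with $\omega_\pi$. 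The same bookkeeping shows $h\mapsto f(\eta h,s)B_\phi(h)$ descends to $T(\A)\backslash H(\A)$ (the factors $\Omega^{-1}(t_0)$ and $\Omega(t_0)$ cancel), so we obtain $Z(s,f,\phi)=\int_{T(\A)\backslash H(\A)}f(\eta h,s)B_\phi(h)\,dh$, as claimed.

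The only genuine issues requiring care are analytic rather than formal. First, the iterated integral must be assembled via Fubini, which needs absolute convergence of all the pieces; this is where cuspidality of $\pi$ re-enters, giving rapid decay of $\phi$ on $Z_H(\A)H(F)\backslash H(\A)$, so that for ${\rm Re}(s)$ in the range of absolute convergence of the Eisenstein series (${\rm Re}(s)>1/2$) everything converges and the interchange is legitimate; the identity of meromorphic functions then extends to all $s$ by analytic continuation. Second, one must pin down the precise diagonal entries $(\bar z,z)$ of $\eta t\eta^{-1}$ (rather than $(z,\bar z)$) so that $\Omega_1$ and $\Omega_2$ assemble into $\Omega^{-1}$ and not some other character — this is exactly the compatibility between the choices of $\eta$ and $\beta$ and the definition (\ref{Omega-defn}), and it is the step I would check most carefully. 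Everything else is the routine unfolding.
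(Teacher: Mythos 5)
Your proof is correct and follows essentially the same route as the paper's: starting from the unfolded expression $Z(s)=\int_{T(F)Z_H(\A)\backslash H(\A)}f(\eta h,s)\phi(h)\,dh$, fibering over $T(\A)\backslash H(\A)$, conjugating the torus through $\eta$ into the Borel to get $f(\eta th,s)=\Omega^{-1}(t)f(\eta h,s)$, and recognizing the inner integral as $B_\phi(h)$. The paper states the conjugation identity without computation, whereas you correctly verify $\eta t\eta^{-1}=\left[\begin{smallmatrix}\bar z&*\\&z\end{smallmatrix}\right]$ and triviality of the modulus factor, and you also spell out the convergence and well-definedness checks that the paper leaves implicit; these are accurate additions but do not change the argument.
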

\begin{proof}
We have
\begin{align*}
Z(s) &= \int\limits_{T(F) Z_H(\A) \backslash H(\A)} f(\eta h) \phi(h) dh \\
&= \int\limits_{T(\A) \backslash H(\A)} \int\limits_{T(F) Z_H(\A) \backslash T(\A)} f(\eta t h) \phi(th) dt dh.
\end{align*}
For $t = x I_2+y\xi \in T(\A)$, 
we get
$$f(\eta t h, s) = f(\eta t \eta^{-1} \eta h, s) = \Omega^{-1}(x+y\sqrt{d}/2) f(\eta h,s).$$
Hence,
$$Z(s) = \int\limits_{T(\A) \backslash H(\A)} f(\eta h,s) \Big(\int\limits_{T(F) Z_H(\A) \backslash T(\A)}  \Omega^{-1}(t) \phi(th) dt \Big) dh = \int\limits_{T(\A)\backslash H(\A)} f(\eta h,s) B_\phi(h) dh,$$
as required.
\end{proof}

By the uniqueness of the Waldspurger model, we have
$$B_\phi(h) = \prod_v B_v(h_v), \qquad f(h, s) = \prod_v f_v(h_v, s)$$
where $h = \otimes' h_v$. Hence, $Z(s) = \prod_v Z_v(s)$, where
\begin{equation}\label{local-integral}
Z_v(s) = \int\limits_{T(F_v) \backslash H(F_v)} f_v(\eta_v h_v, s) B_v(h_v) dh_v.
\end{equation}

\section{Values of the newform in the Waldspurger model}\label{ExplicitValues}
In this section, we will compute the explicit values of the new vector in the Waldspurger model when the $\GL(2)$ representation is either unramified or an unramified twist of a Steinberg representation. Note, in the latter, we will recall the values computed in \cite{FMP} as well as a new calculation when the local extension $L/F$ is split.

\subsection{Set-up}
Let $F$ be a local non-archimedean field of characteristic zero. We will drop the subscript $v$ in this section. Let $\OF$ be the ring of integers of $F$, $\p$ the unique maximal ideal, $\varpi$ a uniformizer and let $q$ be the residue characteristic. Let $K = H(\OF)$ be the maximal compact subgroup of $H(F)$. 

We have fixed three elements $\mathbf{a},\mathbf{b},\mathbf{c}\in F$
such that $\mathbf{d}=\mathbf{b}^2-4\mathbf{a}\mathbf{c}\neq0$. We have
$L=F(\sqrt{\mathbf{d}})$ if $\mathbf{d}\notin F^{\times2}$, and $L=F\oplus F$
otherwise. In the latter case we consider $F$ diagonally embedded. Let
$z\mapsto\bar z$ be the obvious involution on $L$ whose fixed point set is $F$.
We define the Legendre symbol as
\begin{equation}\label{legendresymboldefeq}
 \Big(\frac L\p\Big)=\begin{cases}
                      -1&\text{if $L/F$ is an unramified field extension},\\
                      0&\text{if $L/F$ is a ramified field extension},\\
                      1&\text{if }L=F\oplus F.
                     \end{cases}
\end{equation}
We will make the following assumptions:
\begin{itemize}
 \item $\mathbf{a},\mathbf{b}\in\OF$ and $\mathbf{c}\in\OF^\times$.
 \item If $\mathbf{d}\notin F^{\times2}$, then $\mathbf{d}$ is a generator of
the discriminant of $L/F$.
 \item If $\mathbf{d}\in F^{\times2}$, then $\mathbf{d}\in\OF^\times$.
\end{itemize}
We define elements $\beta$ and $\xi_0$ of $L$ by
\begin{equation}\label{alphadefeq}
 \beta=\left\{\begin{array}{l@{\qquad\text{if }L}l}
 \displaystyle\frac{\mathbf{b}+\sqrt{\mathbf{d}}}{2\mathbf{c}}&\text{ is a
field},\\[2ex]
 \displaystyle\Big(\frac{\mathbf{b}+\sqrt{\mathbf{d}}}{2\mathbf{c}},\frac{
\mathbf{b}-\sqrt{\mathbf{d}}}{2\mathbf{c}}\Big)&=F\oplus F.
 \end{array}\right.
\end{equation}
\begin{equation}\label{xi0defeq}
 \xi_0=\left\{\begin{array}{l@{\qquad\text{if }L}l}
 \displaystyle\frac{-\mathbf{b}+\sqrt{\mathbf{d}}}{2}&\text{ is a field},\\[2ex]
 \displaystyle\Big(\frac{-\mathbf{b}+\sqrt{\mathbf{d}}}{2},\frac{-\mathbf{b}
-\sqrt{\mathbf{d}}}{2}\Big)&=F\oplus F.
 \end{array}\right.
\end{equation}
If $L$ is a field, let $\OF_L$ be its ring of integers, $\varpi_L$ a uniformizer, and $v_L$
the normalized valuation.  If $L = F \oplus F$, put $\OF_L = \OF \oplus \OF$ and $\varpi_L = (\varpi,1)$.
By Lemma 3.1.1 of \cite{PS1}, in either case,
\begin{equation}\label{integralbasiseq}
 \OF_L = \OF +\OF\beta=\OF+\OF\xi_0.
\end{equation}
Fix the ideal in $\OF_L$ given by
\begin{equation}\label{ideal defn}\renewcommand{\arraystretch}{1.3}
 \P_L := \p\OF_L = \left\{
                  \begin{array}{l@{\qquad\text{if }}l}
                    \p_L & \big(\frac L{\p}\big) = -1,\\
                    \p_L^2 & \big(\frac L{\p}\big) = 0,\\
                    \p \oplus \p & \big(\frac L{\p}\big) = 1.
                  \end{array}
                \right.
\end{equation}
Here $\p_L$ is the maximal ideal of $\OF_L$ when $L$ is a field. 
We have $\P_L^n\cap\OF=\p^n$ for all $n\geq0$.

Let us recall the embedding of $L^\times$ as a torus in $H(F)$ for convenience of
calculations. 
With $\mathbf{a},\mathbf{b},\mathbf{c}$ as above, let
$$
 S=\mat{\mathbf{a}}{\frac{\mathbf{b}}2}{\frac{\mathbf{b}}2}{\mathbf{c}},\qquad
 \xi=\mat{\frac{\mathbf{b}}2}{\mathbf{c}}{-\mathbf{a}}{\frac{-\mathbf{b}}2}.
$$
Then $F(\xi)=F \cdot I_2 +F \cdot \xi$ is a two-dimensional $F$-algebra isomorphic to
$L$. If $L$ is a field, then an isomorphism is given by $x+y\xi\mapsto
x+y\frac{\sqrt{\mathbf{d}}}2$. If $L=F\oplus F$, then an isomorphism is given by
$x+y\xi\mapsto(x+y\frac{\sqrt{\mathbf{d}}}2,x-y\frac{\sqrt{\mathbf{d}}}2)$. The
determinant map on $F(\xi)$ corresponds to the norm map on $L$. Let
\begin{equation}\label{TFdefeq}
 T(F)=\{g\in H(F):\:^tgSg=\det(g)S\}.
\end{equation}
One can check that $T(F)=F(\xi)^\times$. Note that $T(F)\cong L^\times$ via the
isomorphism $F(\xi)\cong L$. Under the same isomorphism the group
$T(\OF):=T(F)\cap K$ is isomorphic to $\OF_L^\times$. Note that $T(F)$
consists of all matrices
\begin{equation}\label{TFgeq}
g=t(x,y)=\mat{x+y\frac{\mathbf{b}}2}{\mathbf{c}y}{-\mathbf{a}y}{x-y\frac{\mathbf
{b}}2},\qquad x, y \in F, \,
\det(g)=x^2-\frac14y^2(\mathbf{b}^2-4\mathbf{a}\mathbf{c})\neq0.
\end{equation}
Let $\Omega$ be any character of $L^\times$, which we may view as a character of the torus $T(F)$.  Define
\begin{equation}\label{conductor-of-Omega}
c(\Omega) := \text{ min }\{m \geq 0 : \Omega |_{(1+\P_L^m) \cap \OF_L^\times}
\equiv 1\}.
\end{equation}
Note that this is the conductor of $\Omega$ only in the case $L/F$ is an unramified field extension. Let $\mathcal{B}(\Omega)$ be the space of all locally constant functions $B :
H(F) \rightarrow \C$ satisfying 
\begin{equation}\label{Wald-trans-prop}
B(tg) = \Omega(t) B(g) \qquad \text{ for all } t \in T(F), g \in H(F).
\end{equation}
Let $(\pi, V)$ be any infinite dimensional, irreducible, admissible representation of $H(F)$. We
say that $\pi$ has an {\it $(S, \Omega)$-Waldspurger model} if $\pi$ is isomorphic to
a subrepresentation of $\mathcal{B}(\Omega)$. We call a linear functional $\ell$
on $\pi$ an {\it $(S, \Omega)$-Waldspurger functional} if it satisfies
\begin{equation}\label{Wald-fnal-defn}
\ell(\pi(t)v) = \Omega(t) \ell(v) \qquad \text{ for all } t \in T(F), v \in V.
\end{equation}
If $\pi$ has an $(S, \Omega)$-Waldspurger model then we obtain a $(S, \Omega)$-Waldspurger
functional $\ell$ by $\ell(B) = B(1)$. On the other hand, if $\pi$ has an
$(S, \Omega)$-Waldspurger functional, we obtain an $(S, \Omega)$-Waldspurger model for
$\pi$ by the map $v \mapsto B_v$, where $B_v(g) = \ell(\pi(g)v)$. Observe that a necessary condition for a $(S, \Omega)$-Waldspurger model or functional to exist is that $\Omega |_{F^\times} = \omega_\pi$, the central character of $\pi$.

\subsection{The unramified case}\label{ExplicitValuesUnram}
Throughout this subsection, we suppose that $\pi$ is unramified.
\subsubsection{Preliminaries on the spherical vector in the Waldspurger model}
As $\pi$ is unramified, we have that $\pi = \chi_1 \times \chi_2$ where $\chi_1, \chi_2$  are unramified characters of $F^\times$. Let $\Omega$ be any character of $L^\times$ such that $\Omega |_{F^\times} = \chi_1 \chi_2$. By Saito \cite{Sai} and Tunnell \cite{Tu} or Gross-Prasad \cite{GP} or \cite{FMP}, it is known that $\pi$ has a $(S, \Omega)$-Waldspurger model for any such $\Omega$. Let $B_0$ be the spherical vector in the $(S, \Omega)$-Waldspurger model of $\pi$. Our first task is to give explicit formulas for the values of $B_0(g)$ for all $g \in H(F)$. This is done in the case $\Big(\frac L{\p}\Big) = \pm 1$ and both $c(\Omega) = c(\pi) = 0$ in \cite{BFF}. We will answer this for all $\Omega$ and also for $\Big(\frac L{\p}\Big)=0$. Also, our methods are different from those of \cite{BFF}.  The assumptions on the torus gives the following useful decomposition (see \cite{Su})
\begin{equation}\label{GL2-decomp}
H(F) = \bigsqcup\limits_{m \geq 0} T(F) \mat{\varpi^m}{}{}{1} K.
\end{equation}
Since $B_0$ is the spherical vector in a $(S, \Omega)$-Waldspurger model, we see that $B_0$ is completely determined by its values on $\mat{\varpi^m}{}{}{1}$ with $m\geq 0$. We have the following vanishing result depending on $c(\Omega)$.
\begin{lemma}\label{vanishing-lemma}
Let $c(\Omega) > 0$. Then for all $0 \leq m < c(\Omega)$, we have
$$B_0(\mat{\varpi^m}{}{}{1}) = 0.$$
\end{lemma}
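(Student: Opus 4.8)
The plan is to exploit the transformation property \eqref{Wald-trans-prop} together with the fact that $B_0$ is spherical, i.e. right $K$-invariant. Suppose $0 \le m < c(\Omega)$. The idea is to find an element $t \in T(F) \cap K = T(\OF)$ (identified with a unit in $\OF_L^\times$) and an element $k \in K$ such that
$$t \mat{\varpi^m}{}{}{1} = \mat{\varpi^m}{}{}{1} k,$$
while $\Omega(t) \neq 1$. Given such $t$ and $k$, the transformation law yields
$$B_0\Big(\mat{\varpi^m}{}{}{1}\Big) = B_0\Big(t\mat{\varpi^m}{}{}{1}\Big) \cdot \Omega(t)^{-1} \cdot \Omega(t) \ \text{---}$$
more precisely, $\Omega(t) B_0(\mat{\varpi^m}{}{}{1}) = B_0(t \mat{\varpi^m}{}{}{1}) = B_0(\mat{\varpi^m}{}{}{1} k) = B_0(\mat{\varpi^m}{}{}{1})$, forcing $(\Omega(t) - 1) B_0(\mat{\varpi^m}{}{}{1}) = 0$, hence the value vanishes.

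First I would pin down the stabilizer condition: $t \mat{\varpi^m}{}{}{1} \in \mat{\varpi^m}{}{}{1} K$ is equivalent to $\mat{\varpi^{-m}}{}{}{1} t \mat{\varpi^m}{}{}{1} \in K = H(\OF)$, i.e. conjugating $t$ by $\mathrm{diag}(\varpi^m,1)$ keeps the entries integral. Writing $t = t(x,y)$ as in \eqref{TFgeq}, the conjugate $\mat{\varpi^{-m}}{}{}{1} t(x,y) \mat{\varpi^m}{}{}{1}$ has off-diagonal entries $\varpi^{-m}\mathbf{c}y$ and $-\mathbf{a}y\varpi^{m}$; since $\mathbf{a}\in\OF$ the bottom-left is automatically integral, and since $\mathbf{c}\in\OF^\times$ the top-right is integral exactly when $y \in \p^m$. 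The diagonal entries $x \pm y\mathbf{b}/2$ are integral once $x, y \in \OF$ (using $\mathbf{b}\in\OF$). So the relevant subgroup is
$$T_m := \{t(x,y) : x \in \OF,\ y \in \p^m,\ \det t(x,y) \in \OF^\times\},$$
which under $T(F) \cong L^\times$ corresponds precisely to $(\OF + \p^m \xi_0)^\times = (\OF + \P_L^m \cap \OF_L)^\times$; by \eqref{integralbasiseq} and the fact that $\P_L^m \cap \OF = \p^m$ this is exactly the group $(\OF + \P_L^m)^\times \subseteq \OF_L^\times$ — equivalently, the units congruent to an element of $\OF$ modulo $\P_L^m$. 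The next step is to observe that $\OF_L^\times \cdot (1 + \P_L^{c(\Omega)-1}) \supsetneq T_m$ is not quite what I want; rather, since $m < c(\Omega)$, the definition \eqref{conductor-of-Omega} of $c(\Omega)$ says $\Omega$ is nontrivial on $(1 + \P_L^m) \cap \OF_L^\times$, and one checks $(1+\P_L^m)\cap\OF_L^\times \subseteq T_m$ (an element $1 + \lambda$ with $\lambda \in \P_L^m$ is congruent to $1 \in \OF$ mod $\P_L^m$). Hence $\Omega$ is nontrivial on $T_m$, so we may pick $t \in T_m$ with $\Omega(t) \ne 1$, and the argument above applies.

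The main obstacle — really the only subtle point — is the bookkeeping identifying the matrix group $T_m$ with the arithmetic subgroup $(\OF + \P_L^m)^\times$ of $\OF_L^\times$ correctly in all three cases $\big(\frac{L}{\p}\big) = -1, 0, 1$, and in particular verifying that $(1+\P_L^m)\cap\OF_L^\times$ sits inside it and that conjugation by $\mathrm{diag}(\varpi^m,1)$ genuinely lands in $K$ (not just in $H(F)$ with the right valuations but with determinant a unit, which holds since $\det$ is conjugation-invariant and $\det t \in \OF_L^\times \cap F^\times = \OF^\times$ for $t \in T(\OF)$). Once the correspondence between $y \in \p^m$ and membership in $1 + \P_L^m$ up to the $\OF$-part is nailed down — using \eqref{integralbasiseq}, $\OF_L = \OF + \OF\xi_0$, and $\xi_0$ corresponding to $\sqrt{\mathbf d}$-type elements — the proof is immediate. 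I would present it as: reduce to finding $t$, solve the stabilizer condition explicitly via \eqref{TFgeq}, translate to $\OF_L$, invoke \eqref{conductor-of-Omega}.
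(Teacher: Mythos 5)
Your proof is correct and follows essentially the same route as the paper's. The paper goes directly: pick $t(x,y)\in(1+\P_L^m)\cap\OF_L^\times$ with $\Omega(t(x,y))\neq1$ (possible since $m<c(\Omega)$), observe that membership in $1+\P_L^m$ forces $y\in\p^m$ so that $\mat{\varpi^{-m}}{}{}{1}t(x,y)\mat{\varpi^m}{}{}{1}\in K$, and then invoke the left $T$-transformation and right $K$-invariance of $B_0$ to get $B_0=\Omega(t)^{-1}B_0$. You reach the identical conclusion; your extra step of first characterizing the full stabilizer group $T_m$ and then showing $(1+\P_L^m)\cap\OF_L^\times\subseteq T_m$ is a mild elaboration of the paper's more economical pick-and-verify, but the key idea — exploit an element of $(1+\P_L^m)\cap\OF_L^\times$ on which $\Omega$ is nontrivial, and note that its conjugate by $\mathrm{diag}(\varpi^m,1)$ is still integral because the $\xi_0$-coefficient lies in $\p^m$ — is the same.
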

\begin{proof}
Let $t(x,y) \in (1 + \P^m)\cap\OF_L^\times$ be such that $\Omega(t(x,y)) \neq 1$. Note that this implies that $x+\bb y/2+\cc y \beta \in (1 + \P^m)\cap\OF_L^\times$, which means that $y \in \p^m$. Hence, we have
\begin{align*}
B_0(\mat{\varpi^m}{}{}{1}) &= \Omega(t(x,y))^{-1} B_0(t(x,y)\mat{\varpi^m}{}{}{1}) \\
&= \Omega(t(x,y))^{-1} B_0(\mat{\varpi^m}{}{}{1} \underbrace{\mat{\varpi^{-m}}{}{}{1}t(x,y)\mat{\varpi^m}{}{}{1}}_{\in K}) \\
&= \Omega(t(x,y))^{-1} B_0(\mat{\varpi^m}{}{}{1}),
\end{align*}
which completes the proof.
\end{proof}

\subsubsection{Hecke operator}
The spherical vector is an eigenfunction of the Hecke operator $T(\varpi)$, which corresponds to the characteristic function of the double coset $K\mat{\varpi}{}{}{1} K$ in the Hecke algebra of $K$-bi-invariant functions on $H(F)$. We have the following eigenvalue relation
\begin{equation}\label{Hecke-e-val}
T(\varpi) B_0 = \lambda B_0, \qquad \qquad \lambda = q^{1/2}\big(\chi_1(\varpi)+\chi_2(\varpi)\big).
\end{equation}
Note that the above eigenvalue can be easily checked by using the coset decomposition below and applying $T(\varpi)$ to the spherical vector in the induced model of $\pi$. We have the following decomposition of the double coset into a disjoint union of single cosets.
$$K\mat{\varpi}{}{}{1} K = \bigsqcup\limits_{u \in \OF/\p} \mat{\varpi}{u}{}{1} K \sqcup \mat{1}{}{}{\varpi} K.$$
Hence, we get the key relation to obtain the explicit values of $B_0$. For all $g \in H(F)$, we have
\begin{equation}\label{Hecke-cond-eqn}
\sum\limits_{u \in \OF/\p} B_0(g\mat{\varpi}{u}{}{1}) + B_0(g \mat{1}{}{}{\varpi}) = \lambda B_0(g), \qquad \lambda = q^{1/2}\big(\chi_1(\varpi)+\chi_2(\varpi)\big).
\end{equation}
We wish to use the above equation with $g = \mat{\varpi^m}{}{}{1}$. As will be clear, the case $m=0$ is the most complicated and uses a lot of information regarding the underlying number theory. Of course, that case occurs only if $\Omega$ is also unramified. \vskip 0.2in

\begin{lemma}\label{case1-lemma}
We have
\begin{equation}\label{case1-m=0-eqn}
B_0(\mat{\varpi^m}{}{}{1} \mat{1}{}{}{\varpi}) = \begin{cases} \omega_\pi(\varpi) B_0(\mat{\varpi^{m-1}}{}{}{1}) & \text{ if } m > 0;\\
B_0(\mat{\varpi}{}{}{1}) & \text{ if } m = 0, \aaa \in \OF^\times; \\
\Omega(\varpi_L) B_0(1) & \text{ if } m = 0, \aaa \in \p, \Big(\frac L{\p}\Big) = 0;\\
\Omega(1, \varpi) B_0(1) & \text{ if } m = 0, \aaa \in \p, \Big(\frac L{\p}\Big) = 1.\end{cases}
\end{equation}
\end{lemma}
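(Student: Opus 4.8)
\emph{Proof sketch (plan).} In every case the mechanism is the same: write the group element in question as $t\,\mat{\varpi^{m'}}{}{}{1}\,k$ with $t\in T(F)$, $k\in K$ and $m'\geq0$ — which is possible and essentially unique by the decomposition (\ref{GL2-decomp}) — then use right $K$-invariance of $B_0$ together with the transformation law (\ref{Wald-trans-prop}) to get $B_0(\cdot)=\Omega(t)B_0(\mat{\varpi^{m'}}{}{}{1})$, and finally read off $\Omega(t)$ through the isomorphism $F(\xi)^\times\cong L^\times$.

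The case $m>0$ is immediate: $\mat{\varpi^m}{}{}{1}\mat{1}{}{}{\varpi}=\mat{\varpi^m}{}{}{\varpi}=(\varpi I_2)\mat{\varpi^{m-1}}{}{}{1}$, and $\varpi I_2=t(\varpi,0)$ is a scalar in $T(F)$ corresponding to $\varpi\in F^\times\subseteq L^\times$. Hence (\ref{Wald-trans-prop}) gives the value $\Omega(\varpi I_2)B_0(\mat{\varpi^{m-1}}{}{}{1})=\omega_\pi(\varpi)B_0(\mat{\varpi^{m-1}}{}{}{1})$, using $\Omega|_{F^\times}=\omega_\pi$.

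For $m=0$ I have to locate $\mat{1}{}{}{\varpi}$ in (\ref{GL2-decomp}). Concretely, for a trial torus element $t=t(x,y)$ the requirement $\mat{\varpi^{-m'}}{}{}{1}\,t(x,y)^{-1}\,\mat{1}{}{}{\varpi}\in K$ unwinds into divisibility conditions on $x$, $y$ and $\det t(x,y)$, which I satisfy by exhibiting an explicit solution; equivalently one can first determine $m'$ as the invariant attached to the $\OF_L$-multiplier order of the lattice $\mat{1}{}{}{\varpi}\OF^2=\OF e_1+\p e_2$ and then produce the torus element moving this lattice to $\mat{\varpi^{m'}}{}{}{1}\OF^2$. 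The hypotheses on $\aaa,\bb,\cc$ enter exactly here. First, the fourth a priori possibility $\aaa\in\p$ with $\Big(\frac L{\p}\Big)=-1$ is vacuous: $\aaa\in\p$ forces $\dd\equiv\bb^2\pmod\p$, and since $\dd\in\OF^\times$ this makes $\bb\in\OF^\times$ and hence $\dd$ a nonzero square modulo $\p$, contradicting $L/F$ unramified. In the three remaining cases: (i) if $\aaa\in\OF^\times$, then $m'=1$, witnessed by $t=\mat{\bb}{\cc}{-\aaa}{0}=t(\tfrac{\bb}{2},1)$ together with $k=\tfrac{1}{\aaa\cc}\mat{0}{-\cc}{\aaa}{\bb\varpi}$ (the hypothesis $\aaa,\cc\in\OF^\times$ is precisely what makes $k\in K$); here $t\leftrightarrow\cc\beta\in\OF_L^\times$, and since $\cc\in\OF^\times$ and in the regime where this case is applied $\Omega$ is unramified, $\Omega(\cc\beta)=1$, giving $B_0(\mat{\varpi}{}{}{1})$. (ii) If $\aaa\in\p$ and $\Big(\frac L{\p}\Big)=0$, a short valuation count on $\dd=\bb^2-4\aaa\cc$ (which must have valuation $1$) forces $v(\aaa)=1$ and $\bb\in\p$; then $t=\xi=t(0,1)$ works with $m'=0$, because $\det\xi=-\dd/4$ has valuation $1$ and the resulting $k$ is integral, and $\xi\leftrightarrow\tfrac{\sqrt{\dd}}{2}$, a uniformizer of $\OF_L$, yielding $\Omega(\varpi_L)B_0(1)$. (iii) If $\aaa\in\p$ and $L=F\oplus F$, then $\bb$ is a unit; diagonalizing $\xi$ one checks that $\mat{1}{}{}{\varpi}$ sits on $m'=0$ and that the torus element carrying it there is, modulo $\OF_L^\times=\OF^\times\times\OF^\times$, the element $(1,\varpi)\in L^\times$, giving $\Omega(1,\varpi)B_0(1)$.

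The genuine work is the $m=0$ case analysis: deciding, in each configuration of $\aaa,\bb$ relative to $\p$ and each splitting type of $L/F$, which double coset $T(F)\mat{\varpi^{m'}}{}{}{1}K$ contains $\mat{1}{}{}{\varpi}$, and then correctly identifying the torus multiplier as an element of $L^\times$ up to the relevant unit group. The arithmetic inputs — the vacuity of the unramified-field case with $\aaa\in\p$, and the facts $v(\aaa)=1$ and $\bb\in\p$ in the ramified case — are exactly what make the matrix bookkeeping close up; once the representative has been guessed, the rest is a routine verification.
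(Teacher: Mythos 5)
Your proposal follows the same approach the paper uses: locate $\mat{1}{}{}{\varpi}$ in the decomposition (\ref{GL2-decomp}), write it as $t\mat{\varpi^{m'}}{}{}{1}k$ with $t\in T(F)$, $k\in K$, and read off $\Omega(t)$. The $m>0$ case and the $m=0$, $\aaa\in\OF^\times$ case are handled correctly; your witness $t(\bb/2,1)=\mat{\bb}{\cc}{-\aaa}{0}$ is in fact cleaner than the paper's displayed torus element (which has a sign inconsistency with \eqref{TFgeq} but reaches the same conclusion).

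There is, however, a genuine gap in your treatment of the residue-characteristic-$2$ case. The paper defers to Lemma 2.1 of \cite{FMP} at exactly the points where you substitute direct valuation arguments, and those arguments are only valid for odd residual characteristic. Specifically: (a) to dismiss $\aaa\in\p$ with $\Big(\frac L\p\Big)=-1$ you argue that $\dd\equiv\bb^2\pmod\p$ makes $\dd$ a nonzero square mod $\p$, contradicting $L/F$ unramified — but when the residue field has characteristic $2$ every element is a square mod $\p$, so this gives no contradiction (one would need to work modulo a higher power of $\p$ and a more careful Hensel argument); (b) in the ramified case you assert $v(\dd)=1$, which is false when $q$ is even, since the discriminant of a ramified quadratic extension of a dyadic field can be $\p^2$ or $\p^3$, and your subsequent valuation count ($v(\aaa)=1$, $\bb\in\p$, $\xi^{-1}\mat{1}{}{}{\varpi}\in K$) relies on $v(\dd)=1$. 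Finally, your split case (iii) is only a sketch ("diagonalizing $\xi$ one checks..."); the paper's proof actually requires a case distinction between $v(\aaa)=1$ and $v(\aaa)>1$, with two different explicit matrix identities, and your outline does not make clear that you have verified both branches. To close these gaps you should either cite Lemma 2.1 of \cite{FMP} for the statements $v_L(\beta)=v(\aaa)$ and the exclusion of the inert case, or supply arguments that remain valid in residue characteristic $2$, and you should carry out the split-case matrix verification explicitly.
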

\begin{proof}
The $m > 0$ case is clear. Let $m=0$. By Lemma \ref{vanishing-lemma}, we can assume that $c(\Omega) = 0$. Let $\aaa \in \OF^\times$. We have the matrix identity
$$t(x,y) \mat{1}{}{}{\varpi} = \mat{\varpi}{}{}{1} \underbrace{\mat{}{\cc}{-\aaa}{\bb \varpi}}_{\in K}, \qquad t(x,y) = \mat{}{\cc}{-\aaa}{\bb}.$$
Note that, in this case, we have $t(x,y) \in \OF_L^\times$ and hence $\Omega(t(x,y)) = 1$. This gives us the $m = 0, \aaa \in \OF^\times$ case. 

Now let $\aaa \in \p$. By Lemma 2.1 of \cite{FMP}, we see that this implies $\Big(\frac L{\p}\Big) = 0$ or $1$. First let $\Big(\frac L{\p}\Big) = 0$. Again by Lemma 2.1 of \cite{FMP}, we have $v_L(\beta) = v(\aaa) = 1$, which implies that $\bb \in \p$.  We have the matrix identity
$$t(x,y) \mat{1}{}{}{\varpi}  = \mat{}{\cc}{-\frac{\aaa}{\varpi}}{-\bb} \in K, \qquad \text{ with } x = -\frac{\bb}{2 \varpi}, y = \frac 1{\varpi}.$$
We have $\Omega(t(x,y)) = \Omega(\varpi^{-1}(-\bb+\cc\beta)) = \omega_\pi(\varpi)^{-1} \Omega(\bar\beta) = \omega_\pi(\varpi)^{-1} \Omega(\varpi_L) = \Omega(\varpi_L)^{-1}$. Here, we have again used that $c(\Omega) =0$. Hence, we get the $m=0, \aaa \in \p, \Big(\frac L{\p}\Big) = 0$ case. 

Now, let $\Big(\frac L{\p}\Big) = 1$. Since $\dd \in \OF^\times$ and $\frac{\bb+\sqrt{\dd}}{2\cc} \frac{\bb-\sqrt{\dd}}{2\cc} = \frac{\aaa}{\cc}$, we have $v(\frac{\bb-\sqrt{\dd}}{2\cc})=v(\aaa)$. If $v(\aaa)=1$, then the same matrix identity as above is valid. In this case $\Omega(t(x,y)) = \omega_\pi(\varpi)^{-1} \Omega(\bar\beta) = \omega_\pi(\varpi)^{-1} \Omega(\frac{\bb-\sqrt{\dd}}{2\cc}, \frac{\bb+\sqrt{\dd}}{2\cc}) = \omega_\pi(\varpi)^{-1} \Omega(\varpi,1) =  \Omega(1,\varpi)^{-1}$. If $v(\aaa) > 1$, then we have the matrix identity
$$t(x,y) \mat{1}{}{}{\varpi} = \mat{1}{\cc}{-\aaa/\varpi}{-\bb+\varpi} \in K, \qquad \text{ with }  x = 1-\frac{\bb}{2 \varpi}, y = \frac 1{\varpi}.$$
In this case, 
$\Omega(t(x,y)) = \Omega(\varpi^{-1}(\varpi-\cc\bar\beta)) = \omega_\pi(\varpi)^{-1} \Omega(\varpi-\cc\frac{\bb-\sqrt{\dd}}{2\cc}, \varpi-\cc\frac{\bb+\sqrt{\dd}}{2\cc}) = \omega_\pi(\varpi)^{-1} \Omega(\varpi,1) = \Omega(1, \varpi)^{-1}$, since $\frac{\bb+\sqrt{\dd}}{2\cc} \in \OF^\times$ and $v(\frac{\bb-\sqrt{\dd}}{2\cc})=v(\aaa)>1$. This completes the proof of the lemma.
\end{proof}

\begin{lemma}\label{case2-lemma}
Let $u \in (\OF/\p)^\times, m \geq c(\Omega)$. We have
\begin{equation}\label{case2-eqn}
B_0(\mat{\varpi^m}{}{}{1}\mat{\varpi}{u}{}{1}) = \begin{cases} 
\Omega(\varpi_L) B_0(1) & \text{ if } m=0, \Big(\frac L{\p}\Big)=0, \aaa \in \OF^\times, u = u_0;\\
\Omega(\varpi,1) B_0(1) & \text{ if } m=0, \Big(\frac L{\p}\Big)=1, \aaa \in \OF^\times, u = (-\bb+\sqrt{\dd})/(2\aaa);\\
\Omega(1,\varpi) B_0(1) & \text{ if } m=0, \Big(\frac L{\p}\Big)=1, \aaa \in \OF^\times, u = (-\bb-\sqrt{\dd})/(2\aaa);\\
\Omega(\varpi,1) B_0(1) & \text{ if } m=0, \Big(\frac L{\p}\Big)=1, \aaa \in \p, u = -\cc/\bb;\\
B_0(\mat{\varpi^{m+1}}{}{}{1}) & \text{ otherwise.}
\end{cases}
\end{equation}
Here, in the $\Big(\frac L{\p}\Big)=0$ case $u_0$ is the unique element of $\OF/\p$ such that $u_0 + \beta \not\in \OF_L^\times$. 
\end{lemma}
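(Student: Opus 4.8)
The plan is to mimic the structure of the proof of Lemma~\ref{case1-lemma}: for each asserted equality I will exhibit an explicit matrix identity of the form $t(x,y)\mat{\varpi^m}{}{}{1}\mat{\varpi}{u}{}{1} = \mat{\varpi^{m'}}{}{}{1}\kappa$ with $t(x,y)\in T(F)$ and $\kappa\in K$, and then apply the transformation property \eqref{Wald-trans-prop} together with \eqref{GL2-decomp} and Lemma~\ref{vanishing-lemma}. Concretely, $B_0(t(x,y)\mat{\varpi^{m'}}{}{}{1}\kappa) = \Omega(t(x,y))B_0(\mat{\varpi^{m'}}{}{}{1})$, so once the matrix identity is written down the value is $\Omega(t(x,y))^{-1}B_0(\mat{\varpi^{m'}}{}{}{1})$, and I just need to compute the scalar $\Omega(t(x,y))$ using the dictionary $t(x,y)\leftrightarrow x+y\sqrt{\mathbf d}/2\in L^\times$ (or its split analogue) from \eqref{TFgeq}. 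Note that since we assume $m\geq c(\Omega)$ and $u\in(\OF/\p)^\times$, the entry $\varpi^{m}\cdot u\varpi^{m}$ manipulations stay integral, which is what lets the conjugating factors land in $K$.

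First I would dispose of the generic case, ``$m\geq 1$'' or ``$m=0$ with $u$ not one of the exceptional residues.'' Here one checks directly that $\mat{\varpi^m}{}{}{1}\mat{\varpi}{u}{}{1}$ already lies in $T(F)\mat{\varpi^{m+1}}{}{}{1}K$ with trivial torus part: one writes $\mat{\varpi^m}{}{}{1}\mat{\varpi}{u}{}{1} = \mat{\varpi^{m+1}}{u\varpi^m}{}{1}$ and, using $u\varpi^m\in\OF$, absorbs the upper-triangular unipotent into $K$ after factoring out $\mat{\varpi^{m+1}}{}{}{1}$ (the unipotent $\mat{1}{u\varpi^m}{}{1}$ is already in $K$), giving $B_0(\mat{\varpi^{m+1}}{}{}{1})$ with torus scalar $1$. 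The only subtlety is to confirm that for $m=0$ there is no element of $T(F)$ with negative valuation hiding in the coset unless $u$ hits an exceptional residue; this is exactly the content of Lemma~2.1 of \cite{FMP} on $v(\mathbf a)$, $v_L(\beta)$, which I will invoke.

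The bulk of the work is the four exceptional sub-cases at $m=0$, which parallel the $\mathbf a\in\p$ analysis in Lemma~\ref{case1-lemma} but now with the unipotent entry $u$ chosen precisely so that $t(x,y)\mat{\varpi}{u}{}{1}\in K$ after removing $\mat{\varpi}{}{}{1}$. For the $\big(\frac L\p\big)=0$, $\mathbf a\in\OF^\times$ case, the prescription $u=u_0$ (the unique residue with $u_0+\beta\notin\OF_L^\times$) corresponds to $t(x,y)$ with $x=-\mathbf b/(2\varpi)$-type data so that $x+y\sqrt{\mathbf d}/2$ is a uniformizer of $L$, giving $\Omega(t(x,y))^{-1}=\Omega(\varpi_L)$ after the $c(\Omega)=0$ normalization and the relation $\Omega|_{F^\times}=\omega_\pi$; for the split cases one plays the same game componentwise with $\beta=\big(\frac{\mathbf b+\sqrt{\mathbf d}}{2\mathbf c},\frac{\mathbf b-\sqrt{\mathbf d}}{2\mathbf c}\big)$, and the two choices $u=(-\mathbf b\pm\sqrt{\mathbf d})/(2\mathbf a)$ annihilate one or the other coordinate of the line, landing $\Omega(t(x,y))$ equal to $\Omega(\varpi,1)^{-1}$ or $\Omega(1,\varpi)^{-1}$. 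Throughout I use $c(\Omega)=0$ in these exceptional cases, which is legitimate because Lemma~\ref{vanishing-lemma} forces $B_0(\mat{\varpi^m}{}{}{1})=0$ for $0\le m<c(\Omega)$, so the only interesting exceptional cases have $c(\Omega)=0$.

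The main obstacle I anticipate is purely bookkeeping: pinning down, for each of $\big(\frac L\p\big)\in\{0,1\}$ and each parity of $v(\mathbf a)$, exactly which residue $u$ produces a $t(x,y)$ that is (i) in $T(F)$, (ii) of the claimed $L$-valuation, and (iii) such that the leftover matrix is genuinely in $K$ and not just in $\GL_2(\OF[1/\varpi])$; getting the $\det$ valuations and the $(1+\P_L)$-coset of $x+y\sqrt{\mathbf d}/2$ right so that the $\Omega$-value is exactly $\Omega(\varpi_L)$ or $\Omega(\varpi,1)$ (rather than off by a unit character value) is where care is needed. This is the same type of computation already carried out in Lemma~\ref{case1-lemma}, so no new ideas are required — only a careful case split keyed to \eqref{legendresymboldefeq}, \eqref{alphadefeq}, and Lemma~2.1 of \cite{FMP}.
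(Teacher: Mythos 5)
Your overall plan—exhibit matrix identities $t(x,y)\mat{\varpi^m}{}{}{1}\mat{\varpi}{u}{}{1} = \mat{\varpi^{m'}}{}{}{1}\kappa$ with $t(x,y)\in T(F)$ and $\kappa\in K$, then apply the transformation law—is exactly the paper's strategy, and your treatment of the four exceptional $m=0$ sub-cases matches the paper in spirit. But your handling of the generic case contains a real error. You claim $\mat{\varpi^m}{}{}{1}\mat{\varpi}{u}{}{1}=\mat{\varpi^{m+1}}{u\varpi^m}{}{1}$ lies in $T(F)\mat{\varpi^{m+1}}{}{}{1}K$ ``with trivial torus part'' by ``absorbing the unipotent into $K$.'' This does not work: factoring $\mat{\varpi^{m+1}}{}{}{1}$ out on the left leaves $\mat{1}{\varpi^{-1}u}{}{1}$, which is \emph{not} in $K$; factoring it out on the right leaves $\mat{1}{\varpi^m u}{}{1}$ on the left, which is in $K$ but not in $T(F)$, and $B_0$ is left-equivariant under $T(F)$, not under $K$. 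So neither manipulation gives what you want.

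The correct argument requires a genuinely nontrivial torus element. Setting $\alpha_{u,m}=\cc+\bb\varpi^m u+\aaa\varpi^{2m}u^2$ (a unit whenever $m\geq 1$, or when $m=0$ and $u$ is not exceptional), one takes $x=1+\frac{\bb u\varpi^m}{2\cc}$, $y=-\frac{u\varpi^m}{\cc}$; then $t(x,y)\mat{\varpi^m}{}{}{1}\mat{\varpi}{u}{}{1}=\mat{\varpi^{m+1}}{}{}{1}\kappa$ with $\kappa\in K$, and crucially $t(x,y)=1+u\varpi^m\bar\beta\in 1+\P_L^{c(\Omega)}$ because $m\geq c(\Omega)$, which is what forces $\Omega(t(x,y))=1$. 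Notice that the hypothesis $m\geq c(\Omega)$ is \emph{used} at precisely this step; your proposed argument never invokes it, which is a signal that the reasoning is too weak. Once this is repaired, the rest of your outline (keying the exceptional residues to $\alpha_{u,0}\in\p$, invoking Lemma~2.1 of \cite{FMP}, and computing $\Omega(t(x,y))$ via the dictionary $t(x,y)\leftrightarrow x+y\sqrt{\dd}/2$) matches the paper's proof.
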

\begin{proof}
For $u \in (\OF/\p)^\times$ and $m \geq 0$, set $\alpha_{u,m} := \cc + \bb \varpi^m u + \aaa \varpi^{2m}u^2$. First assume that $\alpha_{u,m} \in \OF^\times$. Then we have the matrix identity
$$t(x,y) \mat{\varpi^m}{}{}{1}\mat{\varpi}{u}{}{1} = \mat{\varpi^{m+1}}{}{}{1} \underbrace{\mat{1}{}{\frac{\aaa\varpi^{2m+1}u}{\cc}}{\frac{\alpha_{u,m}}{\cc}}}_{\in K}, \qquad \text{ with } x = 1+\frac{\bb u \varpi^m}{2 \cc}, y \in -\frac{u \varpi^m}{\cc}.$$
Note that, in this case, $t(x,y) = 1+u\varpi^m\bar\beta \in 1+\P^{c(\Omega)}$, since $m \geq c(\Omega)$. Hence, $\Omega(t(x,y))=1$. 

Now, suppose that $\alpha_{u,m} \in \p$. This implies that $m=0$ and $\Big(\frac L{\p}\Big)=0,1$. Hence, $c(\Omega) = 0$. First assume that $\Big(\frac L{\p}\Big)=0$. If $\aaa \in \p$, then $\bb \in \p$ and hence $\alpha_{u,0} \in \OF^\times$ for all $u \in (\OF/\p)^\times$. So we are in the previous case. If $\aaa \in \OF^\times$, then there is a unique $u_0 \in (\OF/\p)^\times$ such that $\alpha_{u_0,0} \in \varpi \OF^\times$. Hence, $\aaa + \bb (\aaa u_0/\cc) + \cc (\aaa u_0/\cc)^2 \in \varpi\OF^\times$, which implies that $v_L(\aaa u_0/\cc + \beta) = 1$. We have the following matrix identity
$$t(x,y) \mat{\varpi}{u_0}{}{1} = \mat{1+\frac{\bb}{\aaa u_0}}{\frac{\alpha_{u_0,0}}{\aaa \varpi u_0}}{-u_0^{-1}}{} \in K, \qquad \text{ with } y = \frac 1{\aaa \varpi u_0}, x = \varpi^{-1}+\bb y/2.$$
Note that $\Omega(t(x,y)) = \Omega(\varpi^{-1}+\bb y/2 + y\sqrt{\dd}/2) = \Omega(\varpi^{-1}+\cc/(\varpi \aaa u_0) \beta) = \omega_\pi(\varpi)^{-1} \Omega(\aaa u_0/\cc + \beta) = \omega_\pi(\varpi)^{-1} \Omega(\varpi_L) = \Omega(\varpi_L)^{-1}$. The other cases are computed similarly. 
\end{proof}

\subsubsection{Values of the spherical vector in the Waldspurger model}
We have the following result for the explicit values of $B_0$.
\begin{proposition}\label{main-result-prop}
Let $\pi = \chi_1 \times \chi_2$ with $\chi_1 \chi_2^{-1} \neq | \,|^{\pm 1}$ and $\chi_1, \chi_2$ unramified. Let $\Omega$ be a character of $L^\times$ such that $\Omega |_{F^\times} = \omega_\pi$ and $c(\Omega)$ as defined in (\ref{conductor-of-Omega}). Let $\pi$ be given by its $(S, \Omega)$-Waldspurger model and let $B_0$ be a spherical vector in $\pi$. Let
$$R(x):= \sum\limits_{m \geq c(\Omega)} B_0(\mat{\varpi^m}{}{}{1}) x^m$$ 
be a formal power series. Let $\lambda = q^{1/2}\big(\chi_1(\varpi)+\chi_2(\varpi)\big)$. Then we have the following formula
\begin{equation}\label{main-result-formula}
R(x) = \frac{(q-\kappa x)x^{c(\Omega)}}{\omega_\pi(\varpi)x^2- \lambda x + q}B_0(\mat{\varpi^{c(\Omega)}}{}{}{1}) ,
\end{equation}
where
\begin{equation}\label{num-formula}
\kappa = \begin{cases} 0 & \text{ if } c(\Omega) >  0;\\
\frac{\lambda}{q+1} & \text{ if } c(\Omega)=0, \Big(\frac L{\p}\Big) = -1;\\
\Omega(\varpi_L) & \text{ if } c(\Omega)=0, \Big(\frac L{\p}\Big) = 0;\\
-\frac{\lambda}{q-1} + \frac q{q-1}(\Omega(\varpi,1)+\Omega(1,\varpi)) & \text{ if } c(\Omega)=0, \Big(\frac L{\p}\Big) = 1.
\end{cases}
\end{equation}
\end{proposition}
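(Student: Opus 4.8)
The plan is to convert the Hecke eigenvalue relation (\ref{Hecke-cond-eqn}) into a three-term recurrence for the numbers $b_m := B_0(\mat{\varpi^m}{}{}{1})$, read off from it the rational form of the generating function $R(x)$, and then determine the single undetermined coefficient (which turns out to be $-\kappa\,b_{c(\Omega)}$) by inspecting the recurrence at the boundary index $m = c(\Omega)$.

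First I would specialize (\ref{Hecke-cond-eqn}) to $g = \mat{\varpi^m}{}{}{1}$. The term with $u = 0$ in the sum over $\OF/\p$ is exactly $b_{m+1}$; the terms with $u \in (\OF/\p)^\times$ are evaluated by Lemma \ref{case2-lemma}, and the term $B_0(\mat{\varpi^m}{}{}{1}\mat{1}{}{}{\varpi})$ by Lemma \ref{case1-lemma}. For every $m \geq \max(1, c(\Omega))$ all of the exceptional alternatives in those two lemmas are vacuous, since each requires $m = 0$; hence each of the $q - 1$ units $u$ contributes $b_{m+1}$, the term $u = 0$ contributes $b_{m+1}$, and the lower right term contributes $\omega_\pi(\varpi)\,b_{m-1}$ (using also Lemma \ref{vanishing-lemma} to set $b_j = 0$ for $j < c(\Omega)$ where needed). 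This yields
$$q\,b_{m+1} - \lambda\,b_m + \omega_\pi(\varpi)\,b_{m-1} = 0 \qquad \text{for all } m \geq \max(1, c(\Omega)).$$
Multiplying $R(x) = \sum_{m \geq c(\Omega)} b_m x^m$ by $\omega_\pi(\varpi) x^2 - \lambda x + q$ and reindexing, this recurrence annihilates the coefficient of $x^n$ for every $n \geq c(\Omega) + 2$, so the product is a polynomial of degree at most $c(\Omega) + 1$ whose coefficient of $x^{c(\Omega)}$ is $q\,b_{c(\Omega)}$. Since $q \neq 0$ the denominator is invertible in $\C[[x]]$, so matching with the claimed (\ref{main-result-formula}) reduces to identifying the coefficient of $x^{c(\Omega)+1}$, i.e. to proving $\kappa = \lambda - q\,b_{c(\Omega)+1}/b_{c(\Omega)}$ (here $b_{c(\Omega)} \neq 0$, otherwise the recurrence and Lemma \ref{vanishing-lemma} would force $R \equiv 0$ and hence $B_0 = 0$).

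When $c(\Omega) \geq 1$ this is immediate: the recurrence is already valid at $m = c(\Omega)$, where $b_{c(\Omega)-1} = 0$ by Lemma \ref{vanishing-lemma}, so $q\,b_{c(\Omega)+1} = \lambda\,b_{c(\Omega)}$ and therefore $\kappa = 0$, matching (\ref{num-formula}). The substance of the proof is the case $c(\Omega) = 0$, where one instead evaluates (\ref{Hecke-cond-eqn}) at $m = 0$; now the exceptional terms of Lemmas \ref{case1-lemma} and \ref{case2-lemma} are precisely what contribute the arithmetic quantities $\Omega(\varpi_L)\,B_0(1)$, $\Omega(\varpi,1)\,B_0(1)$ and $\Omega(1,\varpi)\,B_0(1)$. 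Collecting the contributions according to the value of $\Big(\frac L{\p}\Big)$ — and, within each, according to whether $\aaa \in \OF^\times$ or $\aaa \in \p$ — produces in every case a single linear relation between $b_0 = B_0(1)$ and $b_1 = B_0(\mat{\varpi}{}{}{1})$: schematically $(q+1)\,b_1 = \lambda\,b_0$ when $\Big(\frac L{\p}\Big) = -1$, then $q\,b_1 + \Omega(\varpi_L)\,b_0 = \lambda\,b_0$ when $\Big(\frac L{\p}\Big) = 0$, and $(q-1)\,b_1 + \big(\Omega(\varpi,1)+\Omega(1,\varpi)\big)\,b_0 = \lambda\,b_0$ when $\Big(\frac L{\p}\Big) = 1$. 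Solving for $b_1/b_0$ and substituting into $\kappa = \lambda - q\,b_1/b_0$ gives the four values in (\ref{num-formula}).

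The main obstacle is this last $m = 0$ analysis. One has to track precisely which residues $u \in (\OF/\p)^\times$ are exceptional in Lemma \ref{case2-lemma} (two of them, the roots of $\cc + \bb u + \aaa u^2$, in the split case; one, namely $-\cc/\bb$, in the ramified case with $\aaa \in \p$; and $u_0$ in the ramified case with $\aaa \in \OF^\times$), count them correctly against the $q - 1$ units, pair them with the appropriate output of Lemma \ref{case1-lemma} for the $\mat{1}{}{}{\varpi}$ term, and verify that the resulting linear relation — hence $\kappa$ — is insensitive to whether $\aaa$ is a unit. Once that is done, the passage through the three-term recurrence and the generating-function identity is routine formal-power-series bookkeeping.
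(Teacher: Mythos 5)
Your argument is the same as the paper's: derive the three-term recurrence $q\,A_{m+1}+\omega_\pi(\varpi)\,A_{m-1}=\lambda\,A_m$ from the Hecke-eigenvalue relation for $m\geq\max(1,c(\Omega))$, pass to the generating function, and then pin down the one remaining degree of freedom ($\kappa$) from the $m=0$ boundary relation when $c(\Omega)=0$; and your three boundary relations $(q+1)b_1=\lambda b_0$, $qb_1+\Omega(\varpi_L)b_0=\lambda b_0$, $(q-1)b_1+(\Omega(\varpi,1)+\Omega(1,\varpi))b_0=\lambda b_0$ are exactly what the paper obtains, from which the four values of $\kappa$ follow. One small bookkeeping slip in your final paragraph: in the ramified case with $\aaa\in\p$ there are \emph{no} exceptional $u\in(\OF/\p)^\times$ in Lemma \ref{case2-lemma} (since $\bb\in\p$ forces $\alpha_{u,0}\in\OF^\times$ for all units $u$); the residue $u=-\cc/\bb$ is instead the single exceptional one in the \emph{split} case with $\aaa\in\p$, the missing $\Omega(\varpi_L)$ (resp.\ $\Omega(1,\varpi)$) contribution being supplied by the $\mat{1}{}{}{\varpi}$ term of Lemma \ref{case1-lemma}. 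This does not affect the linear relations you wrote down, which are correct.
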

\begin{proof}
For $m\geq 0$, we set $A_m = B_0(\mat{\varpi^m}{}{}{1})$. Using (\ref{Hecke-cond-eqn}) with $g = \mat{\varpi^m}{}{}{1}$ and Lemmas \ref{vanishing-lemma},  \ref{case1-lemma}, \ref{case2-lemma}, we get for $m \geq c(\Omega), m > 0$
\begin{equation}\label{recurrence-reln-m>0}
qA_{m+1} + \omega_\pi(\varpi) A_{m-1} = \lambda A_m.
\end{equation}
From this we get the following relation between the generating series.
\begin{equation}\label{gen-ser-reln}
q \sum\limits_{\text{max}(c(\Omega),1)}^\infty A_{m+1} x^m + \omega_\pi(\varpi) \sum\limits_{\text{max}(c(\Omega),1)}^\infty A_{m-1} x^m = \lambda \sum\limits_{\text{max}(c(\Omega),1)}^\infty A_m x^m.
\end{equation}
Let us first consider the case where $c(\Omega) > 0$. Then (\ref{gen-ser-reln}) gives us 
$$\frac qx \sum\limits_{c(\Omega)}^\infty A_{m+1} x^{m+1} + \omega_\pi(\varpi) x \sum\limits_{c(\Omega)}^\infty A_{m-1} x^{m-1} = \lambda \sum\limits_{c(\Omega)}^\infty A_m x^m, $$
which implies
$$  q\big(R(x) - A_{c(\Omega)} x^{c(\Omega)}\big) + \omega_\pi(\varpi) x^2 R(x) = \lambda x R(x).
$$
Solving for $R(x)$ we get the $c(\Omega) > 0$ case of the proposition.

Next, let $c(\Omega) = 0$. We get the following relation from (\ref{gen-ser-reln})
$$\frac qx \sum\limits_2^\infty A_m x^m + \omega_\pi(\varpi) x \sum\limits_0^\infty A_m x^m = \lambda \sum\limits_1^\infty A_m x^m.$$
Hence, we get
$$q\big(R(x) - A_0 - A_1 x\big) + \omega_\pi(\varpi) x^2 R(x) = \lambda x \big(R(x) - A_0\big).$$
Solving for $R(x)$ we have
$$R(x) = \frac{qA_0+qA_1x-\lambda A_0 x}{\omega_\pi(\varpi) x^2-\lambda x + q}.$$
We obtain the following information regarding the above numerator from Lemmas \ref{case1-lemma}, \ref{case2-lemma} using (\ref{Hecke-cond-eqn}) with $g=1$.
\begin{description}
\item[$\Big(\frac L{\p}\Big) = -1$:] We have $(q+1) A_1 = \lambda A_0$. Hence
$$qA_0+qA_1x-\lambda A_0 x = A_0(q-\frac{\lambda}{q+1}x).$$

\item[$\Big(\frac L{\p}\Big) = 0$:] We have $qA_1 + \Omega(\varpi_L)A_0 = \lambda A_0$. Hence
$$qA_0+qA_1x-\lambda A_0 x = A_0(q-\Omega(\varpi_L)x).$$

\item[$\Big(\frac L{\p}\Big) = 1$:] We have $(q-1)A_1 + \big(\Omega(\varpi,1) + \Omega(1,\varpi)\big) A_0 = \lambda A_0$. Hence
$$qA_0+qA_1x-\lambda A_0 x = A_0(q - (-\frac{\lambda}{q-1} + \frac q{q-1}(\Omega(\varpi,1)+\Omega(1,\varpi)))x).$$
\end{description}
This competes the proof of the proposition.
\end{proof}

\subsection{Explicit values for an unramified twist of the Steinberg representation}\label{ExplicitValuesSteinberg}

Throughout this section we assume that the representation $\pi$ is an unramified twist of the Steinberg representation, i.e., $\pi=\chi\St_{\GL_2}$, where $\chi$ is an unramified character of $F^{\times}$. We let $\Omega$ be any character of $L^{\times}$ such that $\Omega|_{F^{\times}}=\omega_{\pi}=\chi^2$. For the field case, \cite{Wald85} states that $\pi$ has an $(S,\Omega)$-Waldspurger model if and only if $\Omega\neq\chi\circ N_{L/F}$.  Note, if $B_0$ is a new form  in the $(S,\Omega)$-Waldspurger model of $\pi$, then $B_0$ is right invariant under the Iwahori subgroup $I = \mat{\OF}{\OF}{\p}{\OF} \cap K$, 
\begin{equation}\label{trace-Steinberg}\sum_{u\in\mathfrak{o}/\mathfrak{p}}B_0(g\mat{1}{}{u}{1})=-B_0(gw)\text{, for } w=\mat{0}{1}{-1}{0},\end{equation}
and
\begin{equation}\label{AL-Steinberg}B_0(g\mat{}{1}{\varpi}{})=-\chi(\varpi)B_0(g).\end{equation}
Using (\ref{GL2-decomp}), we have the following double coset decomposition.
\begin{align}\label{Iwahori-GL2-decomp}
H(F) &= \bigsqcup\limits_{m > 0} \Big(T(F) \mat{\varpi^m}{}{}{1} I \sqcup T(F) \mat{\varpi^m}{}{}{1} w I\Big) \\
& \qquad \bigsqcup \begin{cases} T(F) w I & \text{ if } \Big(\frac{L}{\p}\Big) = -1;\\ 
T(F) w I \sqcup T(F) \mat{1}{}{u_0}{1} I & \text{ if } \Big(\frac{L}{\p}\Big) = 0;\\
T(F) w I \sqcup T(F) \mat{1}{}{u_1}{1} I \sqcup T(F) \mat{1}{}{u_2}{1} I & \text{ if } \Big(\frac{L}{\p}\Big) = 1.\end{cases} \nonumber
\end{align}
In the ramified case, $u_0$ is the unique element of $\OF/\p$ such that $\aaa + \bb u_0 + \cc u_0^2 \in \p$. In the split case, $u_1, u_2$ are the two distinct elements of $\OF/\p$ such that $\aaa + \bb u_i + \cc u_i^2 \in \p$. 
We will begin by restating the relevant portions of Lemma 4.4 in \cite{FMP}. 
\begin{lemma}\cite[Lemma 4.4]{FMP}\label{FMPlemma}
Suppose that $B_0$ is a new form in the $(S,\Omega)$-Waldspurger model of $\pi$. Then,
\begin{enumerate}
\item
For $m>0$, we have
$$B_0(\mat{\varpi^{m}}{}{}{1}w)=\frac{\chi(\varpi)^m}{q^{m}}B_0(w).$$
\item
For $m>0$, we have
$$B_0(\mat{\varpi^m}{}{}{1})=\begin{cases}-\frac{\chi(\varpi)^m}{q^{m-1}}B_0(w)&\text{ if }m\geq c(\Omega)\\ 0&\text{ if }m<c(\Omega).\end{cases}$$
\item
If $L/F$ is ramified, then
$$B_0(\mat{1}{}{u_0}{1})=\begin{cases}-q B_0(w)&\text{ if }c(\Omega)=0\\0&\text{ if }c(\Omega)>0.\end{cases}$$

\end{enumerate}
\end{lemma}
We note that this lemma is only stated for fields in \cite{FMP}, but the proof of part i) and part ii) in the split case follows from exactly the same argument.

We will also need the following analogue of part iii) of the previous lemma in the split case. Note, from Thm. 1.6 in \cite{FMP}, we know that $\pi$ always admits an $(S,\Omega)$-Waldspurger model when $L/F$ is split. 

\begin{lemma}\label{split-Steinberg}
Suppose that $\Big(\frac L\p\Big)=1$. Let $u_1,u_2\in\mathfrak{o}$ be inequivalent modulo $\mathfrak{p}$ and satisfy $cu_i^2+bu_i+a\in\mathfrak{p}$ for $i=1,2$. Then,
\begin{enumerate}
\item If $c(\Omega) > 0$, then we have, for $i=1,2$,
$$B_0(\mat{1}{}{u_i}{1}) = 0.$$ 

\item Let $c(\Omega) = 0$. Assume that $\Omega(1,\varpi) = \chi(\varpi)$. Then $B_0(w) = 0$ and 
$$B_0(\mat{1}{}{u_1}{1}) = -B_0(\mat{1}{}{u_2}{1}).$$

\item Let $c(\Omega) = 0$. Assume that $\Omega(1,\varpi) \neq \chi(\varpi)$. Then $B_0(w) \neq 0$ and
$$B_0(\mat{1}{}{u_1}{1})=\frac{q-1}{\chi(\varpi)\Omega(1,\varpi)^{-1}-1}B_0(w), B_0(\mat{1}{}{u_2}{1})=\frac{q-1}{\chi(\varpi)^{-1}\Omega(1,\varpi)-1} B_0(w).$$
\end{enumerate}
\end{lemma}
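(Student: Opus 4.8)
The plan is to work entirely within the $(S,\Omega)$-Waldspurger model of $\pi=\chi\St_{\GL_2}$ in the split case $L=F\oplus F$, exploiting the three structural properties of the newform $B_0$: right $I$-invariance, the trace relation (\ref{trace-Steinberg}), and the Atkin-Lehner relation (\ref{AL-Steinberg}), together with the double coset decomposition (\ref{Iwahori-GL2-decomp}). The key observation is that in the split case the torus $T(F)$ contains, besides $Z_H(F)$ and $T(\OF)\cong\OF_L^\times$, an element of mixed valuation, namely $t_0=t(x_0,y_0)$ with the property that $t_0\mat{1}{}{u_1}{1}$ lies in the coset $\mat{1}{}{u_2}{1}I$ (or in $wI$), so that the transformation law $B_0(t g)=\Omega(t)B_0(g)$ directly links the values $B_0(\mat{1}{}{u_i}{1})$ to each other and to $B_0(w)$. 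Concretely, since $cu_i^2+bu_i+a\in\p$, the element $-a/(cu_i)-u_i$ has positive valuation exactly at one of the two places of $L$, which is what produces the asymmetry between $\Omega(1,\varpi)$ and $\Omega(\varpi,1)=\Omega(1,\varpi)^{-1}$ (mod $\omega_\pi(\varpi)=\chi(\varpi)^2$) in parts (ii) and (iii).

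The steps, in order. First, for part (i): when $c(\Omega)>0$ I would run the same averaging argument as in Lemma \ref{vanishing-lemma}, finding $t(x,y)\in(1+\P_L)\cap\OF_L^\times$ with $\Omega(t(x,y))\ne1$ and $\mat{1}{}{u_i}{1}^{-1}t(x,y)\mat{1}{}{u_i}{1}\in I$; since $\mat{1}{}{u_i}{1}$ normalizes $I$ conjugation by it preserves $I$, and the usual contradiction forces $B_0(\mat{1}{}{u_i}{1})=0$. (One must check the conjugate lands in $I$ rather than merely in $K$ — this uses $u_i\in\OF$.) Second, I would apply the trace relation (\ref{trace-Steinberg}) with $g=I_2$: the sum $\sum_{u\in\OF/\p}B_0(\mat{1}{}{u}{1})=-B_0(w)$. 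Now every $u$ with $\mat{1}{}{u}{1}\notin\{$the special ones$\}$ satisfies $\mat{1}{}{u}{1}\in T(F)\cdot\eta'\cdot I$ for some coset already accounted for, in fact by (\ref{GL2-decomp})/(\ref{Iwahori-GL2-decomp}) such $\mat{1}{}{u}{1}$ lies in $T(F)\mat{\varpi^{m}}{}{}{1}I$ with $m\ge1$ up to right $I$, or can be absorbed — I would identify precisely which $u$ contribute. For the split case, exactly two values $u_1,u_2$ give the "new" cosets $T(F)\mat{1}{}{u_i}{1}I$, and the remaining $q-2$ values together with the bookkeeping collapse, by the matrix identity $t(x,y)\mat{1}{}{u}{1}=\mat{\varpi}{*}{}{1}(\text{something in }I)$ when $cu^2+bu+a\in\OF^\times$, to a single term involving $B_0(\mat{\varpi}{}{}{1})$, which by Lemma \ref{FMPlemma}(ii) equals $-\chi(\varpi)B_0(w)$ (when $c(\Omega)=0$) or $0$. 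Assembling: $B_0(\mat{1}{}{u_1}{1})+B_0(\mat{1}{}{u_2}{1})+(q-2)(\cdots)=-B_0(w)$.

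Third, and this is the crux, I would find the torus element linking $B_0(\mat{1}{}{u_1}{1})$ and $B_0(\mat{1}{}{u_2}{1})$. Take $t=t(x,y)$ with $y$ chosen so that $t\mat{1}{}{u_1}{1}=\mat{1}{}{u_2}{1}\cdot\kappa$ with $\kappa\in I$ — solving the linear equations as in the proofs of Lemmas \ref{case1-lemma} and \ref{case2-lemma} — and compute $\Omega(t)$ explicitly in terms of $\chi(\varpi)$ and $\Omega(1,\varpi)$ using (\ref{Omega-defn}) and the split identification of $L^\times$. This gives $B_0(\mat{1}{}{u_2}{1})=\Omega(t)B_0(\mat{1}{}{u_1}{1})$. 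Similarly a torus element $t'$ with $t'\mat{1}{}{u_1}{1}=w\kappa'$, $\kappa'\in I$, gives $B_0(w)=\Omega(t')B_0(\mat{1}{}{u_1}{1})$. Substituting both into the trace identity from step two, and separately using the Atkin-Lehner relation (\ref{AL-Steinberg}) with $g=w$, yields a small linear system in $B_0(w),B_0(\mat{1}{}{u_1}{1}),B_0(\mat{1}{}{u_2}{1})$ whose solution is exactly (ii) and (iii) — the dichotomy $B_0(w)=0$ versus $B_0(w)\ne0$ being forced by whether the coefficient $\chi(\varpi)\Omega(1,\varpi)^{-1}-1$ vanishes, i.e.\ whether $\Omega(1,\varpi)=\chi(\varpi)$. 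The formulas $\frac{q-1}{\chi(\varpi)\Omega(1,\varpi)^{-1}-1}$ and $\frac{q-1}{\chi(\varpi)^{-1}\Omega(1,\varpi)-1}$ should drop out after clearing denominators, with the $q-1$ (rather than $q$, as in the ramified case of Lemma \ref{FMPlemma}(iii)) reflecting that two cosets $u_1,u_2$ rather than one are split off.

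The main obstacle I anticipate is the explicit bookkeeping in step two: correctly counting which of the $q$ matrices $\mat{1}{}{u}{1}$ (for $u$ ranging over $\OF/\p$, including $u=0$) fall into which double coset of (\ref{Iwahori-GL2-decomp}), getting the $I$-coset representatives right so that right $I$-invariance of $B_0$ can be invoked, and tracking the torus twist $\Omega(t(x,y))$ through each identity without sign or inverse errors (the split identification $x+y\sqrt{\dd}/2\mapsto(x+y\sqrt{\dd}/2,\,x-y\sqrt{\dd}/2)$ and the definition $\Omega(z)=\Omega_1^{-1}(\bar z)\Omega_2^{-1}(z)$ make it easy to swap the two components). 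The conceptual content — three relations, one unknown $B_0(w)$ plus two values, a $3\times3$ system — is routine linear algebra once the coset combinatorics is pinned down.
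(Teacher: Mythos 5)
Your strategy for part (i) and your appeal to the trace relation (\ref{trace-Steinberg}) with $g=I_2$ are on target and match the paper, but there are two substantive problems with the rest of the plan.

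The most serious gap is in your step three. You propose to find $t\in T(F)$ and $\kappa\in I$ with $t\mat{1}{}{u_1}{1}=\mat{1}{}{u_2}{1}\kappa$, and similarly $t'\mat{1}{}{u_1}{1}=w\kappa'$ with $\kappa'\in I$. Neither identity can exist: the double coset decomposition (\ref{Iwahori-GL2-decomp}) exhibits $T(F)\mat{1}{}{u_1}{1}I$, $T(F)\mat{1}{}{u_2}{1}I$ and $T(F)wI$ as three pairwise \emph{disjoint} $T(F)$-$I$ double cosets, and an identity of the form $t g_1 = g_2\kappa$ with $\kappa\in I$ would put $g_1$ and $g_2$ in the same one. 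The torus element that actually links $\mat{1}{}{u_1}{1}$ to $\mat{1}{}{u_2}{1}$ has non-unit determinant — in the paper it is $t(x,y)$ with $x=\sqrt{\mathbf{d}}/2+\varpi$, $y=1$, whose determinant is $\varpi(\sqrt{\mathbf{d}}+\varpi)$ — so the right-hand side must absorb a factor of determinant $\varpi$. That factor is the Atkin-Lehner element $\mat{1}{}{}{\varpi}w$, not an element of $I$, and this is precisely where (\ref{AL-Steinberg}) enters: the paper's identity is
\begin{equation*}
t(\tfrac{\sqrt{\mathbf{d}}}2+\varpi,1)\,\mat{1}{}{u_2}{1}=\mat{1}{}{u_1}{1}\mat{1}{}{}{\varpi}w\cdot(\text{elements of }I),
\end{equation*}
and applying $B_0$ and the Atkin-Lehner relation gives the crucial $\Omega(\sqrt{\mathbf{d}}+\varpi,\varpi)B_0(\mat{1}{}{u_2}{1})=-\chi(\varpi)B_0(\mat{1}{}{u_1}{1})$. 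You do mention the AL relation but with $g=w$, which is not what is needed; it has to be applied at $g=\mat{1}{}{u_1}{1}$ to bridge the two $u_i$-cosets. Once this one relation is in hand, together with the trace identity, the two-equation system is overdetermined in exactly the way that forces the dichotomy on $B_0(w)$ and gives the formulas in (ii) and (iii).

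Secondly, your bookkeeping in step two is wrong in detail: for $u\in\OF/\p$ with $\mathbf{c}u^2+\mathbf{b}u+\mathbf{a}\in\OF^\times$, the matrix $\mat{1}{}{u}{1}$ lies in $T(F)wI$, not in $T(F)\mat{\varpi}{}{}{1}I$, so each of the $q-2$ remaining terms equals $\Omega(u+\beta)B_0(w)=B_0(w)$ directly (using $c(\Omega)=0$ and Lemma 3.4 of \cite{P-Steinberg} to identify $\{u+\beta\}\cup\{1\}$ with a complete set of representatives for $\OF_L^\times/(\OF^\times+\P_L)$); $B_0(\mat{\varpi}{}{}{1})$ never appears. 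The resulting relation $B_0(\mat{1}{}{u_1}{1})+B_0(\mat{1}{}{u_2}{1})=-(q-1)B_0(w)$ is correct, but you would not have reached it by the route you describe.
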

\begin{proof}
First, set $x=\sqrt{{\bf d}}/2+\varpi$ and $y=1$. Then, one can check that
$$t(x,y)\mat{1}{}{u_2}{1}=\mat{1}{}{u_1}{1}\mat{1}{}{}{\varpi}w\mat{-1}{}{}{1}\mat{-\sqrt{{\bf d}}/{\bf c}}{1}{\varpi}{{\bf c}}.$$
Note, the last two matrices on the right hand side are in $I$. Thus, we have
$$\Omega(\sqrt{{\bf d}}+\varpi,\varpi)B_0(\mat{1}{}{u_2}{1})=B_0(\mat{1}{}{u_1}{1}\mat{1}{}{}{\varpi}w).$$
Also, by (\ref{AL-Steinberg}) we have
$$B_0(\mat{1}{}{u_1}{1}\mat{1}{}{}{\varpi}w)=-\chi(\varpi)B_0(\mat{1}{}{u_1}{1}).$$
Hence, we get
\begin{equation}\label{eqn11}
\Omega(\sqrt{{\bf d}}+\varpi,\varpi)B_0(\mat{1}{}{u_2}{1}) = -\chi(\varpi)B_0(\mat{1}{}{u_1}{1}).
\end{equation}
Now, let us assume that $c(\Omega)>0$. Let $(a_1,a_2)\in\mathfrak{o}^{\times}\oplus\mathfrak{o}^{\times}$ satisfy $\Omega((a_1,a_2))\neq 1$, which is possible since $c(\Omega)>0$. Using this, we set 
$$x=\frac{a_1+a_2}{2},y=\frac{a_1-a_2}{\sqrt{d}}.$$
Then, we have that
$$\mat{1}{}{-u_1}{1}t(x,y)\mat{1}{}{u_1}{1}\in I.$$
Thus,
$$\Omega((a_1,a_2))B(\mat{1}{}{u_1}{1})=B(t(x,y)\mat{1}{}{u_1}{1})=B(\mat{1}{}{u_1}{1}),$$
and since $\Omega((a_1,a_2))\neq 1$, we have $B(\mat{1}{}{u_1}{1})=0$. By (\ref{eqn11}), we also get $B(\mat{1}{}{u_2}{1})=0$. This completes the proof of part i).

Next, if we set $x={\bf b}/2+{\bf c}u$ and $y=1$ for any $u\in\mathfrak{o}/\mathfrak{p}$ with $u$ not equivalent to $u_1$ or $u_2$ modulo $\mathfrak{p}$, then we have
$$\mat{1}{}{u}{1}\mat{-{\bf c}}{{\bf b}+{\bf c}u}{}{-\beta_{u,0}}=t(x,y)w,$$
where $\beta_{u,0}$ is defined in Lemma 3.2 of \cite{P-Steinberg}. From this, it follows that
$$B_0(\mat{1}{}{u}{1})=\Omega(u+\beta)B_0(w).$$
Applying this to (\ref{trace-Steinberg}) we have
$$B_0(\mat{1}{}{u_1}{1})+B_0(\mat{1}{}{u_2}{1})=-B_0(w)\left[\sum_{\substack{u\in\mathfrak{o}/\mathfrak{p}\\u\neq u_1,u_2}}\Omega(u+\beta)+\Omega(1)\right].$$
By  Lemma 3.4 in \cite{P-Steinberg}, the summation on the right hand side is over a complete set of representatives for $\OF_L^\times/(\OF^\times + \P)$, and hence is equal to $q-1$ since $c(\Omega)=0$. So, we get
$$B_0(\mat{1}{}{u_1}{1})+B_0(\mat{1}{}{u_2}{1})=-(q-1)B_0(w).$$
Combining this with (\ref{eqn11}), we have
$$(\chi(\varpi)^{-1}\Omega(1,\varpi)-1)B_0(\mat{1}{}{u_2}{1})=(q-1)B_0(w),$$
where we have used the fact that $\Omega$ is unramified and that $\sqrt{{\bf d}}\in\mathfrak{o}^{\times}$. Parts ii) and iii) now follow.
\end{proof}

When $B_0(w)\neq 0$, we will choose $B_0$ to be normalized so that $B_0(w)=1$. Note, if $\pi$ admits a non-zero $(S,\Omega)$-Waldspurger model and $B_0(w)=0$, then it is necessarily the case that $L/F$ is split and $c(\Omega) = 0$. In that case, we normalize so that
$$B_0(\mat{1}{}{u_1}{1})=-B_0(\mat{1}{}{u_2}{1})=1.$$

\section{Local non-archimedean zeta integral}\label{IntegralCalculation}
In this section, we will compute the local integral (\ref{local-integral}) in the non-archimedean case. 
We will first compute the zeta integral when the $\GL(2)$ representation is unramified. Finally, we will compute the zeta integral in several cases when the $\GL(2)$ representation is an unramified twist of the Steinberg representation.

\subsection{The local unramified integral}
Now, we will compute the local integral \eqref{local-integral} given by
$$Z(s) = \int\limits_{T(F) \backslash H(F)} f(\eta h, s) B(h) dh.$$
The measure is normalized so that 
$$\int\limits_{T(F)\backslash T(F) K} dt = 1.$$
Let us assume that $\Omega_1, \Omega_2$ are unramified characters and $\pi$ is unramified. This implies that $c(\Omega) = 0$. Choose the unramified section $f$ given by
$$f(\mat{u}{v}{}{w}k) = \Omega_1(u) \Omega_2(w) |u/w|_L^{s+1/2}, \text{ for } \mat{u}{v}{}{w} \in B(L) \text{ and } k \in H(\OF_L).$$
Let $B = B_0$ the spherical vector in $\pi$ normalized so that $B_0(1) = 1$. This is possible by Proposition \ref{main-result-prop}. Hence, we have
\begin{align*}
Z(s) &= \sum\limits_{m = 0}^\infty \int\limits_{T(F)\backslash T(F) \mat{\varpi^m}{}{}{1}K} f(\eta h, s) B_0(h) dh \\
&= \sum\limits_{m = 0}^\infty V_m f(\mat{\varpi^m}{}{}{1}, s) B_0(\mat{\varpi^m}{}{}{1}),
\end{align*}
where, by Lemma 3.5.3 of \cite{Fu}, we have
$$V_m := \int\limits_{T(F)\backslash T(F) \mat{\varpi^m}{}{}{1}K} dt = \begin{cases} (1 - \Big(\frac L\p\Big)q^{-1}) q^m & \text{ if } m \geq 1; \\
1 & \text{ if } m = 0.\end{cases}
$$
We have also used that $\mat{\varpi^{-m}}{}{}{1} \eta \mat{\varpi^m}{}{}{1} \in H(\OF_L)$ since $\beta \in \OF_L$. Hence, we get
\begin{align*}
Z(s) &= \sum\limits_{m = 0}^\infty (1 - \Big(\frac L\p\Big)q^{-1}) q^m \Omega_1(\varpi^m) |\varpi^m|_L^{s+1/2} B_0(\mat{\varpi^m}{}{}{1}) + \Big(\frac L\p\Big)q^{-1}\\
&= (1 - \Big(\frac L\p\Big)q^{-1}) \sum\limits_{m = 0}^\infty \big(\Omega_1(\varpi) q^{-2s}\big)^m B_0(\mat{\varpi^m}{}{}{1}) + \Big(\frac L\p\Big)q^{-1}\\
&= (1 - \Big(\frac L\p\Big)q^{-1})  R(\Omega_1(\varpi) q^{-2s}) + \Big(\frac L\p\Big)q^{-1}.
\end{align*}
Using the formula for $R(x)$ from Proposition \ref{main-result-prop}, after some computation, we get the following result.
\begin{theorem}\label{local-unram-int-thm}
Let $\pi, \Omega_1$ and $\Omega_2$ be unramified. We have
\begin{equation}\label{local-unram-int-eqn}
Z(s) = \frac{L(2s+1/2, \pi \times \Omega_1|_{F^\times})}{L(2s+1, \Omega_1 \Omega_2^{-1})},
\end{equation}
where
$$L(s, \Omega_1 \Omega_2^{-1}) = \begin{cases} \big(1- \Omega_1 \Omega_2^{-1}(\varpi)q^{-2s}\big)^{-1} & \text{ if } \Big(\frac L\p\Big) = -1; \\ \big(1- \Omega_1 \Omega_2^{-1}(\varpi_L)q^{-s}\big)^{-1} & \text{ if } \Big(\frac L\p\Big) = 0; \\
\big(1- \Omega_1 \Omega_2^{-1}(\varpi,1)q^{-s}\big)^{-1} \big(1- \Omega_1 \Omega_2^{-1}(1,\varpi)q^{-s}\big)^{-1} & \text{ if } \Big(\frac L\p\Big) = 1.\end{cases}$$
\end{theorem}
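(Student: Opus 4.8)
The plan is to start from the explicit expression for $Z(s)$ derived just above the theorem statement, namely
\[
Z(s) = \Big(1 - \big(\tfrac L\p\big)q^{-1}\Big)\,R\big(\Omega_1(\varpi)q^{-2s}\big) + \big(\tfrac L\p\big)q^{-1},
\]
and to substitute the closed form for the generating series $R(x)$ from Proposition \ref{main-result-prop}. Since $c(\Omega)=0$ in the present situation, that formula reads $R(x) = (q - \kappa x)\big/\big(\omega_\pi(\varpi)x^2 - \lambda x + q\big)$ with $B_0(1)=1$, where $\lambda = q^{1/2}(\chi_1(\varpi)+\chi_2(\varpi))$ and $\kappa$ takes one of the three values in (\ref{num-formula}) according to $\big(\tfrac L\p\big)$. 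First I would record that $\omega_\pi(\varpi) = \chi_1(\varpi)\chi_2(\varpi)$, so that the denominator factors as $\omega_\pi(\varpi)x^2 - \lambda x + q = \big(q^{1/2} - \chi_1(\varpi)q^{1/2}x\big)\big(q^{1/2}-\chi_2(\varpi)q^{1/2}x\big)$... more carefully, as $q\big(1 - \chi_1(\varpi)x\big)\big(1 - \chi_2(\varpi)x\big)$ after checking the constant and linear terms. Evaluating at $x = \Omega_1(\varpi)q^{-2s}$ and using that $\Omega_1|_{F^\times} = \Omega_1\Omega_2|_{\A^\times}\cdot(\Omega_2|_{F^\times})^{-1}$ is linked to the Satake parameters appropriately, this denominator becomes (up to the factor $q$) exactly $L(2s+1/2,\pi\times\Omega_1|_{F^\times})^{-1}$, which is the desired numerator of $Z(s)$.

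Next I would handle the numerator of $Z(s)$. Combining the two terms over the common denominator $q(1-\chi_1(\varpi)x)(1-\chi_2(\varpi)x)$ evaluated at $x=\Omega_1(\varpi)q^{-2s}$, the numerator becomes
\[
\Big(1 - \big(\tfrac L\p\big)q^{-1}\Big)(q - \kappa x) + \big(\tfrac L\p\big)q^{-1}\big(q(1-\chi_1(\varpi)x)(1-\chi_2(\varpi)x)\big).
\]
The claim is that this equals $q\cdot\big(1 - \Omega_1\Omega_2^{-1}(\varpi)q^{-2s}\big)$ when $\big(\tfrac L\p\big)=-1$, $q\cdot\big(1 - \Omega_1\Omega_2^{-1}(\varpi_L)q^{-s}\big)$ when $\big(\tfrac L\p\big)=0$, and $q\cdot\big(1-\Omega_1\Omega_2^{-1}(\varpi,1)q^{-s}\big)\big(1-\Omega_1\Omega_2^{-1}(1,\varpi)q^{-s}\big)$ when $\big(\tfrac L\p\big)=1$. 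Each of these is a case-by-case algebraic identity: plug in the corresponding value of $\kappa$, expand, and simplify using $\lambda = q^{1/2}(\chi_1(\varpi)+\chi_2(\varpi))$, $\omega_\pi(\varpi)=\chi_1(\varpi)\chi_2(\varpi) = \Omega(\varpi)\ (\text{on }F^\times)$, and the relation $\Omega_1\Omega_2|_{\A^\times}=\omega_\pi^{-1}$ (hence $\Omega_1\Omega_2|_{F^\times}(\varpi)=\omega_\pi(\varpi)^{-1}$). The key number-theoretic inputs are the norm relations: in the unramified field case $N_{L/F}(\varpi_L)$ generates $\p^2$, so $\Omega_1\Omega_2^{-1}(\varpi)q^{-2s}$ pairs with $q^{-2s}$; in the ramified case $N_{L/F}(\varpi_L)$ generates $\p$ so one gets $q^{-s}$; and in the split case $\varpi_L=(\varpi,1)$ has norm generating $\p$, and $(1,\varpi)$ its conjugate.

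I expect the main obstacle to be purely bookkeeping: making sure the definition $\Omega(z) = \Omega_1^{-1}(\bar z)\Omega_2^{-1}(z)$ in (\ref{Omega-defn}) is consistently translated into the quantities $\Omega(\varpi_L)$, $\Omega(\varpi,1)$, $\Omega(1,\varpi)$ appearing in $\kappa$, and then re-expressed in terms of $\Omega_1\Omega_2^{-1}$ as they appear in the statement of the theorem. In particular, in the split case one must track which of the two embeddings $L=F\oplus F$ corresponds to $z$ versus $\bar z$, so that $\Omega(\varpi,1)$ unwinds to $\Omega_1^{-1}(1,\varpi)\Omega_2^{-1}(\varpi,1)$ or the reverse; getting this right is what makes the $\big(\tfrac L\p\big)=1$ numerator factor as a product of two Euler-like factors rather than collapsing. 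There is also a small subtlety in checking that the $\kappa=\frac{\lambda}{q+1}$ case genuinely produces $L(2s+1,\Omega_1\Omega_2^{-1})^{-1} = 1 - \Omega_1\Omega_2^{-1}(\varpi)q^{-2s}$ with the $q^{-2s}$ (not $q^{-s}$) exponent, which is exactly where the unramified-field norm-square relation enters. Once these conventions are pinned down, each of the three identities is a short expansion, and dividing numerator by denominator yields (\ref{local-unram-int-eqn}).
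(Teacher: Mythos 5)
Your overall strategy---substitute the closed form for $R(x)$ from Proposition \ref{main-result-prop} into $Z(s) = (1-(\tfrac L\p)q^{-1})R(\Omega_1(\varpi)q^{-2s}) + (\tfrac L\p)q^{-1}$, factor the quadratic denominator, combine over a common denominator, and simplify case by case in $\kappa$---is exactly the ``some computation'' the paper leaves unstated, so the route is the same.

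There is, however, a concrete error in the key step. Both of your proposed factorizations of the denominator, $(q^{1/2}-\chi_1(\varpi)q^{1/2}x)(q^{1/2}-\chi_2(\varpi)q^{1/2}x)$ and $q(1-\chi_1(\varpi)x)(1-\chi_2(\varpi)x)$, expand to $q - q(\chi_1+\chi_2)(\varpi)x + q\chi_1\chi_2(\varpi)x^2$, while the actual denominator is $\omega_\pi(\varpi)x^2-\lambda x+q = \chi_1\chi_2(\varpi)x^2 - q^{1/2}(\chi_1+\chi_2)(\varpi)x + q$. The correct factorization is
$$\omega_\pi(\varpi)x^2-\lambda x+q = q\bigl(1-\chi_1(\varpi)q^{-1/2}x\bigr)\bigl(1-\chi_2(\varpi)q^{-1/2}x\bigr),$$
and this $q^{-1/2}$ is precisely what produces the $1/2$ shift: at $x=\Omega_1(\varpi)q^{-2s}$ this gives $q\cdot L(2s+\tfrac12,\pi\times\Omega_1|_{F^\times})^{-1}$, whereas your version would give $q\cdot L(2s,\pi\times\Omega_1|_{F^\times})^{-1}$. (Your phrase ``linked to the Satake parameters appropriately'' is where this slipped.) The same wrong polynomial then appears in the piece you recombine into the numerator, so the error propagates there too; also your claimed closed forms for the numerator cases (e.g.\ $q(1-\Omega_1\Omega_2^{-1}(\varpi)q^{-2s})$ for $(\tfrac L\p)=-1$) should carry $q^{-4s-2}$, $q^{-2s-1}$, etc., once the argument $2s+1$ of the $L$-factor is substituted correctly. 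With the corrected factorization, the case-by-case simplification you outline does go through: the $(\tfrac L\p)=-1$ numerator collapses to $q(1-q^{-2}\omega_\pi(\varpi)x^2) = q(1-\Omega_1\Omega_2^{-1}(\varpi)q^{-4s-2})$ using $\omega_\pi(\varpi)=(\Omega_1\Omega_2)^{-1}(\varpi)$; the ramified case uses $\Omega_1(\varpi)=\Omega_1(\varpi_L)^2$; and the split case factors as $(1-\Omega_1\Omega_2^{-1}(\varpi,1)q^{-2s-1})(1-\Omega_1\Omega_2^{-1}(1,\varpi)q^{-2s-1})$ exactly as you anticipate.
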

\subsection{The local integral for the unramified twist of a Steinberg representation}
We now proceed to the case that $\pi=\chi\St$ is the unramified twist of a Steinberg representation, and we let $B_0$ denote the new-form   in the $(S,\Omega)$-Waldspurger model of $\pi$ which was introduced in Section \ref{ExplicitValuesSteinberg}.
\subsubsection{Preliminaries}

For the calculation of $Z(s)$ we will require certain volume calculations throughout. Note, for a subgroup $K'\subset K$ we set
$$V_{K',m}=\int\limits_{T(F)\backslash T(F)\mat{\varpi^m}{}{}{1}K'}dh,$$
where we have normalized the measure so that $V_{K,0}=1$. In what follows, we set $I\subset K$ to be the Iwahori subgroup.

\begin{lemma}\label{Steinberg-volumes}
For $m\geq 0$ we have
\begin{enumerate}
\item
$$V_{w I,m}=\frac{q^{m+1}(1-\left(\frac L\p\right)q^{-1})}{q+1}.$$
\item
For $m\geq 1$,
$$V_{I,m}=\frac{q^{m}(1-\left(\frac L\p\right)q^{-1})}{q+1}.$$
\item
If $\left(\frac L\p\right)=0$, then
$$V_{\mat{1}{}{u_0}{1} I,0}=\frac{1}{q+1}.$$
\item
If $\left(\frac L\p\right)=1$, then
$$V_{\mat{1}{}{u_1}{1} I,0}=V_{\mat{1}{}{u_2}{1} I,0}=\frac{1}{q+1}.$$
\end{enumerate}
\end{lemma}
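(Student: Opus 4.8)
The plan is to compute each volume directly from the coset decomposition $H(F) = \bigsqcup_{m\geq 0} T(F)\mat{\varpi^m}{}{}{1}K$ in (\ref{GL2-decomp}), refined by the Iwahori-level decomposition (\ref{Iwahori-GL2-decomp}). The strategy throughout is to compare a given volume with the already-known volume $V_{K,m} = \int_{T(F)\backslash T(F)\mat{\varpi^m}{}{}{1}K}dh$, which by Lemma 3.5.3 of \cite{Fu} (quoted in the unramified integral computation above) equals $(1-(\frac L\p)q^{-1})q^m$ for $m\geq 1$ and $1$ for $m=0$. Since $K = I \sqcup wI \sqcup \bigsqcup_{u\in\OF/\p}\mat{1}{}{u}{1}I$ (the standard decomposition of $\GL_2(\OF)$ into $q+1$ Iwahori cosets indexed by $\mathbf{P}^1(\OF/\p)$), we have $V_{K,m} = V_{I,m} + V_{wI,m} + \sum_{u\in\OF/\p}V_{\mat{1}{}{u}{1}I,m}$, so everything reduces to understanding how $T(F)$ interacts with these individual Iwahori cosets at each level $m$.

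For part i), I would show that for each $m\geq 0$ the double coset $T(F)\mat{\varpi^m}{}{}{1}wI$ is a single genuine coset in the sense that the stabilizer $\mat{\varpi^m}{}{}{1}wI(\mat{\varpi^m}{}{}{1}w)^{-1}\cap T(F)$ has the same volume normalization for all $m$; more precisely, I expect $V_{wI,m} = q^{m+1}\cdot V_{wI,0}$ to follow from the translation behavior under $\mat{\varpi}{}{}{1}$ together with the observation that conjugating $I$ into $wIw^{-1}$ swaps the roles of the upper and lower triangular entries, and then pin down $V_{wI,0}$ by summing over the full decomposition at level $m=0$. Similarly for part ii), the coset $T(F)\mat{\varpi^m}{}{}{1}I$ for $m\geq 1$ has volume $q^m V_{I,1}$ by the same scaling argument, and $V_{I,1}$ is determined once $V_{wI,1}$ is known. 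The cleanest route is probably: first establish the scaling relations $V_{wI,m+1} = qV_{wI,m}$ and $V_{I,m+1}=qV_{I,m}$ for $m\geq 1$ (and $V_{I,1}$ related to $V_{wI,0}$); then observe that at every level $m\geq 1$ the cosets $T(F)\mat{1}{}{u}{1}I$ collapse into the $\mat{\varpi^{m}}{}{}{1}K$ stratum only via $I$ and $wI$ (because for $m\geq 1$ all the matrices $\mat{\varpi^m}{}{}{1}\mat{1}{}{u}{1}$ with $u\in\OF$ already lie in $T(F)\mat{\varpi^m}{}{}{1}I\sqcup T(F)\mat{\varpi^m}{}{}{1}wI$, which is exactly what (\ref{Iwahori-GL2-decomp}) encodes); hence $V_{K,m} = V_{I,m}+V_{wI,m}$ for $m\geq 1$, giving two equations that, combined with the factor-of-$q$ ratio $V_{wI,m} = qV_{I,m}$ (which I expect from the relative index $[I:I\cap wIw^{-1}]$ type count), solve to $V_{I,m} = \frac{q^m(1-(\frac L\p)q^{-1})}{q+1}$ and $V_{wI,m} = \frac{q^{m+1}(1-(\frac L\p)q^{-1})}{q+1}$.

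For parts iii) and iv), at level $m=0$ the decomposition (\ref{Iwahori-GL2-decomp}) tells us precisely which of the $q+1$ Iwahori cosets $\mat{1}{}{u}{1}I$ (plus $wI$) are genuinely new representatives: in the ramified case only $\mat{1}{}{u_0}{1}I$ survives alongside $wI$, and in the split case $\mat{1}{}{u_1}{1}I$ and $\mat{1}{}{u_2}{1}I$ survive alongside $wI$; all other $\mat{1}{}{u}{1}$ with $u\not\equiv u_i$ lie in $T(F)wI$ by the matrix identities used in the proofs of Lemma \ref{FMPlemma} and Lemma \ref{split-Steinberg} (namely $\mat{1}{}{u}{1}\cdot(\text{element of }I) = t(x,y)w$ for appropriate $x,y$). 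So I would: count how many $u$'s fall into $T(F)wI$ (namely $q-1$ in the ramified case, $q-1$ in the split case), each contributing the "same" volume as $wI$ does relative to... — actually the cleaner bookkeeping is that $V_{K,0} = 1 = V_{I,0} + V_{wI,0} + (\#\text{surviving }u\text{-cosets})\cdot V_{\mat{1}{}{u_i}{1}I,0}$, where by homogeneity under $T(F)$ the surviving $u$-cosets all have equal volume; combined with $V_{wI,0} = \frac{q(1-(\frac L\p)q^{-1})}{q+1}$ from part i) and the value of $V_{I,0}$ (obtainable from part ii) at $m=1$ via scaling, or directly), one solves for $V_{\mat{1}{}{u_i}{1}I,0} = \frac{1}{q+1}$. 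The main obstacle I anticipate is the careful justification of the scaling relations $V_{\bullet,m+1} = qV_{\bullet,m}$ and the index count giving $V_{wI,m}=qV_{I,m}$: these require tracking the intersection $g I g^{-1}\cap T(F)$ for $g = \mat{\varpi^m}{}{}{1}$ and $g = \mat{\varpi^m}{}{}{1}w$ against the lattice structure $\OF_L = \OF+\OF\beta$, and in particular using that $\mat{\varpi^{-m}}{}{}{1}\eta\mat{\varpi^m}{}{}{1}\in K$ while the $w$-conjugate pushes $\beta$ down by $\varpi^m$, which is exactly where the $(\frac L\p)$-dependence and the factor $(1-(\frac L\p)q^{-1})$ (the measure of $T(\OF)\backslash T(\OF)\cdot(\text{stuff})$, i.e.\ of $\OF_L^\times\backslash$ something) enters. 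Once the level-$m\geq 1$ relations and the level-$0$ sum are in hand, everything is forced by linear algebra.
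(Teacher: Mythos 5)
The paper itself offers no direct proof: parts i) and ii) are delegated to Lemmas 3.7.1–3.7.3 of \cite{PS1}, and parts iii), iv) to Lemma 4.1 of \cite{PS3}. Your proposal is therefore a genuinely different, self-contained route, and its broad strategy is sound: express $V_{K,m}$ (known from Lemma 3.5.3 of \cite{Fu}) as a sum of the constituent Iwahori-coset volumes predicted by (\ref{Iwahori-GL2-decomp}), and pin down those constituents via the stabilizer intersections $T(F)\cap gIg^{-1}$ and the resulting scaling relations. For $m\geq 1$ the identity $V_{K,m}=V_{I,m}+V_{wI,m}$ together with the ratio $V_{wI,m}=qV_{I,m}$ does close the system, and the ratio really does come from an index computation: using $t(x,y)$ and $\cc\in\OF^\times$, one finds $T(F)\cap\mat{\varpi^m}{}{}{1}I\mat{\varpi^{-m}}{}{}{1}$ is cut out by $y\in\p^m$ while $T(F)\cap\mat{\varpi^m}{}{}{1}wIw^{-1}\mat{\varpi^{-m}}{}{}{1}$ is cut out by $y\in\p^{m+1}$, giving index $q$. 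Note that this is a statement about $T(F)\cap gIg^{-1}$, not about $[I:I\cap wIw^{-1}]$ as you initially suggest; your later remark about ``tracking the intersection $gIg^{-1}\cap T(F)$'' is the correct one to pursue. The same computation with $m=0$ gives $V_{wI,1}=qV_{wI,0}$ and hence the $m=0$ case of part i).

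There is, however, a genuine error in your $m=0$ bookkeeping. You write $V_{K,0}=1=V_{I,0}+V_{wI,0}+(\#\text{surviving})\cdot V_{\mat{1}{}{u_i}{1}I,0}$, and propose to recover $V_{I,0}$ ``from part ii) at $m=1$ via scaling.'' But (\ref{Iwahori-GL2-decomp}) has no separate $T(F)I$ piece at level $m=0$; that coset is absorbed into $T(F)wI$ (along with $T(F)\mat{1}{}{u}{1}I$ for $u\not\equiv u_i$). The correct identities are $1=V_{wI,0}+V_{\mat{1}{}{u_0}{1}I,0}$ in the ramified case and $1=V_{wI,0}+V_{\mat{1}{}{u_1}{1}I,0}+V_{\mat{1}{}{u_2}{1}I,0}$ in the split case. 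Including the phantom $V_{I,0}$ term and assigning it the value $V_{I,1}/q$ actually yields wrong answers (e.g.\ $V_{\mat{1}{}{u_0}{1}I,0}=0$ in the ramified case). Relatedly, the decomposition ``$K=I\sqcup wI\sqcup\bigsqcup_{u\in\OF/\p}\mat{1}{}{u}{1}I$'' lists $q+2$ cosets, double-counting $I=\mat{1}{}{0}{1}I$; it should have $q+1$ terms. Finally, the claim that the split cosets $\mat{1}{}{u_1}{1}I$ and $\mat{1}{}{u_2}{1}I$ have equal volume ``by homogeneity under $T(F)$'' is not a consequence of inner $T(F)$-symmetry (which would identify the cosets, not just their volumes); the clean fix here is to compute the stabilizer directly: for $g=\mat{1}{}{u_i}{1}$, the $(2,1)$-entry of $g^{-1}t(x,y)g$ is $-y(\aaa+\bb u_i+\cc u_i^2)$, which lies in $\p$ automatically since $\aaa+\bb u_i+\cc u_i^2\in\p$, so $T(F)\cap gIg^{-1}=T(\OF)$ and thus $V_{\mat{1}{}{u_i}{1}I,0}=\mathrm{vol}(I)/\mathrm{vol}(K)=1/(q+1)$ directly, in both the ramified and split cases.
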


\begin{proof}
Parts i) and ii) follow from similar arguments as in the proof of Lemmas 3.7.1, 3.7.2 and 3.7.3 in \cite{PS1}. Parts iii) and iv) follow from part i) by applying Lemma 4.1 in \cite{PS3}.
\end{proof}

Throughout, we will use the following expression for $Z(s)$ which is obtained by applying (\ref{Iwahori-GL2-decomp}),
\begin{equation}\label{zeta-Steinberg}
Z(s)=\sum\limits_{m = 1}^\infty \left(\int\limits_{T(F)\backslash T(F) \mat{\varpi^m}{}{}{1}\I}\!\!\!\!\!\!\!\!\!\!\!\!\!\!\!\!\!\!\!\! f(\eta h, s) B_0(h) dh+\!\!\!\!\!\!\!\!\!\!\!\!\!\!\!\!\!\!\!\!\int\limits_{T(F)\backslash T(F) \mat{\varpi^m}{}{}{1}w\I}\!\!\!\!\!\!\!\!\!\!\!\!\!\!\!\!\!\!\!\! f(\eta h, s) B_0(h) dh\right)+\!\!\!\!\!\int\limits_{T(F)\backslash T(F)K} \!\!\!\!\!f(\eta h, s) B_0(h) dh.\end{equation}
\subsubsection{Integrating against a ramified principal series}

In this section we assume that $c(\Omega_1)=1$ and $c(\Omega_2)=0$, so that $c(\Omega)=1$, which implies that $\pi$ has an $(S,\Omega)$-Waldspurger model. We choose the section $f\in I(\Omega_1,\Omega_2,s)$ given by the formula
$$f(h,s)= \begin{cases} \Omega_1(a)\Omega_2(d)\left|\frac{a}{d}\right|_L^{s+1/2} & \text{ if } h\in\mat{a}{\ast}{}{d}\mat{1}{0}{1}{1}K_1( \P_L),\\
0 & \text{ o.w.} \end{cases}$$
where
$$\mat{a}{\ast}{}{d}\in B(L),$$
and
$$K_1( \P_L)=\left\{\mat{a}{b}{c}{d}\in H(\mathfrak{o}_L):c\in \P_L,d\in 1+ \P_L\right\}.$$
Note, when $L$ is a field, one can easily show that
$$B(L)\mat{1}{0}{1}{1}K_1( \P_L)=B(L)\mat{1}{0}{1}{1}K_1( \mathfrak{p}_L),$$
which justifies our choice of section $f$.

We will need the following lemma for evaluating the zeta integral $Z(s)$.

\begin{lemma}\label{SteinbergIwahoriInvariance}
Let $f$ be as above. Then, $f$ is right invariant with respect to $I$.
\end{lemma}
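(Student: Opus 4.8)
The plan is to verify right-$I$-invariance of $f$ by checking it against a set of generators of $I$ modulo the subgroup under which $f$ is manifestly invariant, namely $K_1(\P_L)$. Recall $I = \mat{\OF}{\OF}{\p}{\OF} \cap K$ inside $H(F)$, while $f$ is constructed to be right-invariant under the much smaller group $K_1(\P_L) \subset H(\OF_L)$. The subtlety is that $I$ is a subgroup of $H(F)$, and its image under the inclusion $H(F) \hookrightarrow H(F) \subset H(\A_L)$-factor (equivalently, its relation to $H(\OF_L)$) must be understood: since $\OF \subset \OF_L$ and $\p \subset \P_L$, we do have $I \subset H(\OF_L)$, but $I$ is generally not contained in $K_1(\P_L)$. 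So the content of the lemma is that the finitely many extra cosets are still absorbed by $f$ via the Borel transformation law.

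Concretely, I would argue as follows. The group $I$ is generated by $K_1(\p) = \mat{\OF}{\OF}{\p}{1+\p}$ (over $F$) together with elements like $\mat{u}{}{}{1}$ for $u \in \OF^\times$ and the element $n_- = \mat{1}{}{1}{1}$ (and torus elements $\mat{}{}{}{d}$, $d\in\OF^\times$). For $g$ ranging over these generators, and for $h$ in the support of $f$, I want to show $f(hg,s) = f(h,s)$. By definition $f$ is supported on $B(L)\mat{1}{}{1}{1}K_1(\P_L)$ and transforms on the left by the character of $B(L)$; so it suffices to show that right multiplication by each generator $g$ of $I$ preserves this double coset and, when one moves the resulting element back into the standard form $b \cdot \mat{1}{}{1}{1} \cdot k_1$ with $b \in B(L)$, $k_1 \in K_1(\P_L)$, the Borel part $b$ contributes trivially to the character $\Omega_1(a)\Omega_2(d)|a/d|_L^{s+1/2}$. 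For $g \in K_1(\p) \subset K_1(\P_L)$ there is nothing to prove. For $g = \mat{1}{}{1}{1}$: one computes $\mat{1}{}{1}{1}\mat{1}{}{1}{1} = \mat{1}{}{2}{1}$, and since $2 \in \OF^\times$ in our setting (residue characteristic is odd is not assumed — but note $\mathbf d \in \OF^\times$ or generates the discriminant forces care; actually one should just use that $\mat{1}{}{2}{1} = \mat{1}{}{}{2}^{?}$ — more cleanly, absorb via the Iwasawa-type decomposition $\mat{1}{}{t}{1} = \mat{(1)}{}{}{(1)}\cdot(\text{unipotent upper})\cdot(\text{element of }K_1(\P_L))$ valid because $t\in\OF \subset \OF_L$). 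The torus and diagonal generators $\mat{u}{}{}{1}$, $u\in\OF^\times$, land in $H(\OF_L)^\times$ and one checks directly they move $\mat{1}{}{1}{1}K_1(\P_L)$ back into itself up to a Borel factor $\mat{u}{}{}{1}$ which, since $u \in \OF^\times \subset \OF_L^\times$ and $\Omega_1$ has conductor $\P_L$, satisfies $\Omega_1(u) = 1$ — wait, $\Omega_1$ has conductor $1$ meaning trivial on $1 + \P_L$, not on all of $\OF_L^\times$; so here I must instead verify that the Borel factor produced is actually in $1 + \P_L$ or has $|a/d|_L = 1$ with $\Omega_1\Omega_2$-value $1$, which is exactly where the specific shape of $K_1(\P_L)$ (lower-left in $\P_L$, lower-right in $1+\P_L$) is used.

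So the key computational step — and the main obstacle — is the explicit coset-bookkeeping: for each generator $g$ of $I$, write $\mat{1}{}{1}{1}\, g = b_g\, \mat{1}{}{1}{1}\, k_g$ with $b_g \in B(L)$ upper-triangular and $k_g \in K_1(\P_L)$, and check that the diagonal entries of $b_g$ lie in the right subgroups so that $f$ is genuinely unchanged. I expect this to reduce to a handful of $2\times 2$ matrix identities over $\OF_L$, each routine but requiring attention to whether entries land in $\OF_L^\times$, in $\P_L$, or in $1+\P_L$; the split case $L = F\oplus F$ and the two field cases $(\frac L\p) = -1, 0$ may need to be handled separately since $\P_L$ is $\p_L$, $\p_L^2$, or $\p\oplus\p$ respectively, but the argument is uniform in spirit. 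Once right-invariance under a generating set of $I$ is established, right-$I$-invariance of $f$ follows, completing the proof.
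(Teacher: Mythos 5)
Your overall plan — reduce right-$I$-invariance to the built-in right-$K_1(\P_L)$-invariance of $f$, and control what is left over via the Borel transformation law — is the right one, but the execution has both an error and a missing key idea. The error: you list $\mat{1}{}{1}{1}$ among the generators of $I$, but that matrix is not in the Iwahori subgroup, whose lower-left entry must lie in $\p$. Likewise, $\mat{u}{}{}{1}$ for $u \in \OF^\times$ is already an element of $K_1(\P_L)$ (lower-left $0 \in \P_L$, lower-right $1 \in 1+\P_L$), so there is nothing extra to check there; the element you should be worrying about is $\mat{1}{}{}{d}$ with $d\in\OF^\times$, which you do not address. Your discussion of whether $\Omega_1(u)=1$ is a symptom of this confusion: indeed $\Omega_1$ is \emph{not} trivial on $\OF_L^\times$ (it has conductor exponent $1$), and the argument cannot rely on that.

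The missing idea that makes the paper's proof one line is a simple factorization of $I$. Take any $g = \mat{a}{b}{c}{d} \in I$. Since $c\in\p$ and $ad-bc\in\OF^\times$, one gets $a,d\in\OF^\times$. Then
$$g = \mat{d}{}{}{d}\,\mat{a/d}{b/d}{c/d}{1},$$
and the second factor lies in $K_1(\P_L)$: its lower-left entry $c/d$ is in $\p\subset\P_L$, its lower-right entry is $1$, and its determinant is in $\OF^\times\subset\OF_L^\times$. Thus every element of $I$ is a central element of $Z(\OF^\times)$ times an element of $K_1(\P_L)$. The $K_1(\P_L)$ part is absorbed by the definition of $f$, while the central part $z\in\OF^\times$ contributes $(\Omega_1\Omega_2)(z)=\omega_\pi^{-1}(z)=1$, since $\omega_\pi=\chi^2$ is unramified. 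No case analysis on $\big(\tfrac{L}{\p}\big)$ and no double-coset bookkeeping with $\mat{1}{}{1}{1}$ is needed. Your proposal gestures toward the ``$\Omega_1\Omega_2$-value $1$'' phenomenon at the end, but never isolates this central factorization, which is precisely the content of the lemma.
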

\begin{proof}
Any element of $I$  can be written as the product of an element of  $Z(\OF^\times)$ and an element in $K_1(\P_L)$. Now, using the relation $(\Omega_1 \Omega_2)|_{F^\times} = \omega_\pi^{-1}$ and the fact that $\omega_\pi$ is unramified, we get the lemma.
%
\end{proof}

Using this lemma, we obtain the following result which simplifies our zeta integral.
\begin{proposition}\label{SteinbergPrinSeries}
With notation as above we have,
\begin{enumerate}
\item
For $m>0$,
$$\int\limits_{T(F)\backslash T(F) \mat{\varpi^m}{}{}{1} I} f(\eta h, s) B_0(h) dh=0.$$
\item
$$\int\limits_{T(F)\backslash T(F)K} f(\eta h, s) B_0(h) dh=V_{w I,0}.$$
\end{enumerate}
\end{proposition}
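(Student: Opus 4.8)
The plan is to analyze the integral over $T(F)\backslash T(F)\mat{\varpi^m}{}{}{1}K'$ in \eqref{zeta-Steinberg} by examining where the section $f(\eta h,s)$ is supported as $h$ ranges over such a coset, and using the explicit values of $B_0$ from Lemma \ref{FMPlemma} together with the volume computations of Lemma \ref{Steinberg-volumes}. For part i), I would fix $m>0$ and a representative $h = t\,\mat{\varpi^m}{}{}{1}k$ with $k \in I$; by Lemma \ref{SteinbergIwahoriInvariance}, $f(\eta h,s) = f(\eta t\,\mat{\varpi^m}{}{}{1},s)$, so the $I$-dependence drops out and I only need to understand $f(\eta t\,\mat{\varpi^m}{}{}{1},s)$ for $t\in T(F)$. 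The transformation property \eqref{ind-rep-defn} of $f$ together with the definition of the section (supported on $B(L)\mat{1}{0}{1}{1}K_1(\P_L)$) means $f(\eta t\,\mat{\varpi^m}{}{}{1},s)$ is nonzero only when $\eta t\,\mat{\varpi^m}{}{}{1}$ lies in that support set. I expect that, combined with $B_0$ being supported (for the relevant cosets) only where $m \geq c(\Omega) = 1$ — which is exactly the range here — the key point is that on the coset $T(F)\mat{\varpi^m}{}{}{1}I$ the section $f(\eta\,\cdot\,,s)$ is forced to pick up a nontrivial value of $\Omega_1$ (equivalently $\Omega$), causing cancellation in the integral. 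Concretely, I would find an element $t_0 \in T(\OF)$ normalizing the coset (i.e. $\mat{\varpi^{-m}}{}{}{1} t_0 \mat{\varpi^m}{}{}{1} \in I$ and $\eta^{-1} t_0 \eta$ acting by a scalar) such that $\Omega_1(\text{scalar}) \neq 1$; then the substitution $h \mapsto t_0 h$ shows the integral equals itself times a nontrivial root of unity, hence vanishes. This is essentially the same mechanism as in Lemma \ref{vanishing-lemma}, adapted to the ramified section.

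For part ii), the coset is $T(F)K$, and I would use the decomposition $K = I \sqcup wI \sqcup \bigsqcup_{u} \mat{1}{}{u}{1} I$ (the standard coset decomposition of $K$ over $I$, analogous to what appears implicitly in \eqref{Iwahori-GL2-decomp}) to split the integral into pieces indexed by these $I$-cosets. On each piece $f(\eta\,\cdot\,,s)$ is again $I$-invariant on the right by Lemma \ref{SteinbergIwahoriInvariance}, so it is constant (as a function of the $I$-coset representative) up to the torus action, and I must determine for which representatives $r \in \{w\} \cup \{\mat{1}{}{u}{1}\}$ the value $f(\eta r,s)$ is nonzero and what it equals. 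I expect that, because of the defining support condition on $f$ and the explicit form of $\eta = \mat{1}{}{\beta}{1}$, the section is nonzero on essentially one of these cosets — the one forcing $B_0$ to be evaluated at $w$ — and the surviving contribution is $f(\eta w, s) \cdot B_0(w) \cdot V_{wI,0}$, which with the normalization $f(\eta w, 0\text{-part}) = 1$ (from the choice $\mat{1}{0}{1}{1}$ in the support, since $\eta w$ lands in $B(L)\mat{1}{0}{1}{1}K_1(\P_L)$ up to the relevant torus factor, contributing trivially) and $B_0(w) = 1$ collapses to $V_{wI,0}$. I would verify the matrix bookkeeping: that $\eta w \in B(L) \mat{1}{0}{1}{1} K_1(\P_L)$ with trivial $\Omega_1\Omega_2$-twist, and that the other representatives $\mat{1}{}{u}{1}$ either land outside the support of $f$ or contribute values that cancel against each other or vanish by the $c(\Omega)=1$ vanishing in Lemma \ref{FMPlemma} and Lemma \ref{split-Steinberg}(i).

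The main obstacle I anticipate is the explicit matrix-coset arithmetic: precisely identifying, for each case $\big(\frac{L}{\p}\big) = -1, 0, 1$, which $I$-cosets inside $K$ and which $T(F)$-translates of $\mat{\varpi^m}{}{}{1}I$ land in the support $B(L)\mat{1}{0}{1}{1}K_1(\P_L)$ after left multiplication by $\eta$, and checking that the nonvanishing contributions are exactly accounted for. This requires careful use of the integral basis \eqref{integralbasiseq}, the description \eqref{TFgeq} of $T(F)$, and the valuations of $\aaa,\bb,\cc$; the split case $\big(\frac{L}{\p}\big)=1$, with its two extra cosets $\mat{1}{}{u_i}{1}I$, is likely the most delicate and will lean on Lemma \ref{split-Steinberg}(i) to kill those terms when $c(\Omega)>0$. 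Once the support analysis is done, the volume factors come directly from Lemma \ref{Steinberg-volumes} and the rest is bookkeeping.
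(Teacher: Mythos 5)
There is a genuine gap in your argument for part~(i). Your proposed mechanism is a character-cancellation argument (``essentially the same mechanism as in Lemma~\ref{vanishing-lemma}''), but that mechanism produces nothing here. For $m\geq 1$, any $t_0=t(x,y)\in T(F)$ with $\mat{\varpi^{-m}}{}{}{1}t_0\mat{\varpi^m}{}{}{1}\in I$ is forced to have $x\in\OF^\times$ and $y\in\p^m$, so under $T(F)\cong L^\times$ it corresponds to an element of $\OF^\times\cdot(1+\P_L^m)$. Since $\Omega|_{\OF^\times}=1$ (because $\Omega|_{F^\times}=\omega_\pi$ is unramified) and $\Omega|_{1+\P_L^m}=1$ (because $m\geq c(\Omega)=1$), the scalar you hope to be a nontrivial root of unity is always equal to $1$. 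What actually makes the integral vanish is a \emph{support} argument of the simplest kind: for $m\geq 1$ one has $\eta\mat{\varpi^m}{}{}{1}=\mat{\varpi^m}{}{}{1}\mat{1}{}{\beta\varpi^m}{1}\in B(L)K_1(\P_L)$, and $B(L)K_1(\P_L)$ is disjoint from the support $B(L)\mat{1}{0}{1}{1}K_1(\P_L)$ of $f$ (these are distinct $K_1(\P_L)$-orbits in $B(L)\backslash H(L)$), hence $f(\eta\mat{\varpi^m}{}{}{1},s)=0$. You in fact flagged the support condition in your first sentence, but then discarded it in favor of the cancellation idea, which is the wrong direction.

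For part~(ii), your plan is broadly aligned with the paper's, but one of your expectations is false and would trip up the bookkeeping. You say you expect $f(\eta\,\cdot\,,s)$ to be nonzero ``on essentially one of these cosets''; in fact $\eta\mat{1}{}{u}{1}=\mat{1}{}{\beta+u}{1}$ lies in the support of $f$ whenever $\beta+u\in\OF_L^\times$, i.e.\ for all but at most two residues $u$. The calculation works not because $f$ kills those cosets but because the paper organizes $T(F)\backslash T(F)K$ by the $T(F)$--$I$ double coset decomposition~(\ref{Iwahori-GL2-decomp}): all the ``generic'' $\mat{1}{}{u}{1}I$ with $\beta+u\in\OF_L^\times$ are already contained in $T(F)wI$ (via the identity $\mat{1}{}{u}{1}\mat{-\cc}{\bb+\cc u}{}{-\beta_{u,0}}=t(x,y)w$), and the only other pieces are $T(F)\mat{1}{}{u_0}{1}I$ (ramified case) or $T(F)\mat{1}{}{u_i}{1}I$, $i=1,2$ (split case). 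Those pieces are killed not by $f$ but by $B_0$: since $c(\Omega)=1>0$, Lemma~\ref{FMPlemma}(iii) and Lemma~\ref{split-Steinberg}(i) give $B_0=0$ there. The single surviving contribution is from $T(F)wI$, where $\eta w=w(w^{-1}\eta w)$ with $w^{-1}\eta w=\mat{1}{-\beta}{}{1}\in K_1(\P_L)$, giving $f(\eta w,s)=1$ and hence $V_{wI,0}$. So if you switch to the double coset decomposition and keep track of where the vanishing really comes from ($B_0$, not $f$), the rest of your plan goes through; as written, the supporting claims would not survive the matrix bookkeeping you correctly identified as the obstacle.
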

\begin{proof}
First, we prove i). Applying Lemma \ref{SteinbergIwahoriInvariance}, it is enough to show that 
$$\eta \mat{\varpi^m}{}{}{1}\notin B(L)\mat{1}{0}{1}{1}K_1(\P_L).$$
This follows from the fact that $\eta \mat{\varpi^m}{}{}{1} \in B(L) K_1(\P_L)$ and $B(L) K_1(\P_L) \cap B(L)\mat{1}{0}{1}{1}K_1(\P_L)$ is empty.
In order to prove ii), note that we can rewrite $w$ as
$$w=\mat{-1}{}{}{-1}\mat{1}{-1}{}{1}\mat{1}{}{1}{1}\mat{1}{-1}{}{1},$$
from which it follows that $\eta w = w (w^{-1}\eta w)$ is in the support of $f$ giving us
$$\int\limits_{T(F)\backslash T(F)w I} f(\eta h, s) B_0(h) dh=V_{w I,0}.$$
Note that we have used $c(\Omega)>0$ in the split case to get $B_0(w) \neq 0$. By  Lemmas \ref{FMPlemma} and \ref{split-Steinberg}, we have $B_0(\mat{1}{}{u_0}{1})=0$ if $L/F$ is a ramified field extension and $B_0(\mat{1}{}{u_i}{1})=0, i=1,2$ if $L/F$ is a split extension. Now part ii) follows from (\ref{Iwahori-GL2-decomp}).
\end{proof}

With this proposition in hand, we are now prepared to compute the zeta integral, i.e., 
\begin{eqnarray*}
Z(s)&=&\sum\limits_{m = 1}^\infty \left(\int\limits_{T(F)\backslash T(F) \mat{\varpi^m}{}{}{1} I}\!\!\!\!\!\!\!\!\!\!\!\!\!\!\!\!\!\!\!\! f(\eta h, s) B_0(h) dh+\!\!\!\!\!\!\!\!\!\!\!\!\!\!\!\!\!\!\!\!\int\limits_{T(F)\backslash T(F) \mat{\varpi^m}{}{}{1}w I}\!\!\!\!\!\!\!\!\!\!\!\!\!\!\!\!\!\!\!\! f(\eta h, s) B_0(h) dh\right)+\!\!\!\!\!\int\limits_{T(F)\backslash T(F)K} \!\!\!\!\!f(\eta h, s) B_0(h) dh\\
&=&V_{w I,0}+\sum\limits_{m = 1}^\infty\int\limits_{T(F)\backslash T(F) \mat{\varpi^m}{}{}{1}w I} f(\eta h, s) B_0(h) dh\\
&=&\sum\limits_{m = 0}^\infty V_{w I,m}f(\mat{-\varpi^m}{}{}{-1}\mat{}{1}{-1}{})B_0(\mat{\varpi^m}{}{}{1}w)\\
&=&\sum\limits_{m = 0}^\infty \frac{q^m(q-\left(\frac L\p\right))}{q+1}\Omega_1(\varpi)^m|\varpi^m|_L^{s+1/2}\chi(\varpi)^{m}q^{-m}\\
&=&\frac{q-\left(\frac L\p\right)}{q+1}\sum\limits_{m = 0}^\infty (\Omega_1(\varpi)\chi(\varpi)q^{-2s-1})^m.
\end{eqnarray*}

With this calculation, we have shown the following theorem.
\begin{theorem}\label{stein-ram-int-thm}
Let $\pi=\chi\St_{\GL_2}$ with $\chi$ an unramified character of $F^{\times}$. Let $\Omega_1$ and $\Omega_2$ be characters of $L^{\times}$ with $c(\Omega_1)=1$ and $\Omega_2$ being unramified, and suppose that $\Omega_1\Omega_2|_{F^{\times}}=\omega_{\pi}^{-1}$. Then,
$$Z(s)=\frac{q-\left(\frac L\p\right)}{q+1}\frac{L(2s+1/2,\pi\times\Omega_1|_{F^{\times}})}{L(2s+1,\Omega_1\Omega_2^{-1})}.$$
\end{theorem}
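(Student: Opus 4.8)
The plan is to evaluate the three–term expression \eqref{zeta-Steinberg} for $Z(s)$ coming from the Iwahori–level decomposition \eqref{Iwahori-GL2-decomp}, to show that the only surviving contribution is the sum over the cosets $T(F)\mat{\varpi^m}{}{}{1}wI$, and then to recognize the resulting geometric series as the stated quotient of local $L$–factors. Much of the work has already been set up: Proposition \ref{SteinbergPrinSeries} kills the terms attached to $T(F)\mat{\varpi^m}{}{}{1}I$ with $m>0$ and reduces the $T(F)K$ term to $V_{wI,0}$, so that
$$Z(s)=\sum_{m\geq0}\int\limits_{T(F)\backslash T(F)\mat{\varpi^m}{}{}{1}wI}f(\eta h,s)\,B_0(h)\,dh,$$
the $m=0$ term being precisely $V_{wI,0}$. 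The input to Proposition \ref{SteinbergPrinSeries} is that the chosen section $f$ is supported on $B(L)\mat{1}{0}{1}{1}K_1(\P_L)$ and, by Lemma \ref{SteinbergIwahoriInvariance}, right $I$–invariant, so that it suffices to locate the double–coset representatives $\eta\mat{\varpi^m}{}{}{1}$, $\eta w$, $\eta\mat{1}{}{u_i}{1}$ relative to that support, together with Lemmas \ref{FMPlemma}(iii) and \ref{split-Steinberg}(i), which force $B_0$ to vanish on the $\mat{1}{}{u_i}{1}$–cosets when $c(\Omega)>0$.

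Next I would evaluate the surviving sum term by term. Using the right $I$–invariance of $f$ and $B_0$, each integral equals $V_{wI,m}\cdot f(\eta\mat{\varpi^m}{}{}{1}w,s)\cdot B_0(\mat{\varpi^m}{}{}{1}w)$, into which I substitute: the volume $V_{wI,m}=q^{m+1}\big(1-\big(\frac{L}{\p}\big)q^{-1}\big)/(q+1)$ of Lemma \ref{Steinberg-volumes}(i); the value $B_0(\mat{\varpi^m}{}{}{1}w)=\chi(\varpi)^m q^{-m}B_0(w)$ of Lemma \ref{FMPlemma}(i), normalized by $B_0(w)=1$ (legitimate because $c(\Omega)=1>0$ forces $B_0(w)\neq0$, as recorded after Lemma \ref{split-Steinberg}); and the value of the section, obtained by writing $\eta\mat{\varpi^m}{}{}{1}w$ as $\mat{-\varpi^m}{}{}{-1}\mat{}{1}{-1}{}$ times an element of $\mat{1}{0}{1}{1}K_1(\P_L)$, whence $f(\eta\mat{\varpi^m}{}{}{1}w,s)=\Omega_1(\varpi)^m|\varpi|_L^{m(s+1/2)}=\Omega_1(\varpi)^m q^{-m(2s+1)}$. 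Collecting powers of $q$ gives $Z(s)=\frac{q-(\frac{L}{\p})}{q+1}\sum_{m\geq0}\big(\Omega_1(\varpi)\chi(\varpi)q^{-2s-1}\big)^m$.

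The last step is to sum the geometric series and match $L$–factors. One has $\sum_{m\geq0}(\Omega_1(\varpi)\chi(\varpi)q^{-2s-1})^m=(1-\Omega_1(\varpi)\chi(\varpi)q^{-2s-1})^{-1}$, which equals $L(2s+1/2,\pi\times\Omega_1|_{F^\times})$ since $\pi=\chi\St_{\GL_2}$ gives $L(s,(\chi\mu)\St_{\GL_2})=(1-\chi\mu(\varpi)q^{-s-1/2})^{-1}$ for a character $\mu$ of $F^\times$, and $\mu=\Omega_1|_{F^\times}$ satisfies $\mu(\varpi)=\Omega_1(\varpi)$ under the diagonal embedding $F^\times\hookrightarrow L^\times$; and $L(2s+1,\Omega_1\Omega_2^{-1})=1$ because $\Omega_1\Omega_2^{-1}$ is ramified ($c(\Omega_1)=1$ while $\Omega_2$ is unramified). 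In the split case one should double–check that this denominator is genuinely trivial — i.e. that every component of $\Omega_1\Omega_2^{-1}$ relevant to the conductor hypothesis contributes a trivial local factor — and that the occurrences of $\Omega_1(\varpi)$ from the section calculation and of $\Omega_1|_{F^\times}$ in the $L$–factor are matched under $F^\times\hookrightarrow L^\times=F^\times\times F^\times$. The real obstacle is not this closing identification, which is one line once the pieces are assembled, but the coset analysis behind Proposition \ref{SteinbergPrinSeries} and the explicit decomposition of $\eta\mat{\varpi^m}{}{}{1}w$ modulo the support of $f$; fortunately both are available from the cited lemmas and from the preliminary work of Section \ref{ExplicitValuesSteinberg}.
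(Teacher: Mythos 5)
Your proposal is correct and mirrors the paper's own proof step for step: both reduce $Z(s)$ via Proposition \ref{SteinbergPrinSeries} and Lemmas \ref{FMPlemma}, \ref{split-Steinberg}, \ref{Steinberg-volumes} to the sum over the cells $T(F)\mat{\varpi^m}{}{}{1}wI$, evaluate each term as $V_{wI,m}\cdot f(\eta\mat{\varpi^m}{}{}{1}w,s)\cdot B_0(\mat{\varpi^m}{}{}{1}w)$, and recognize the geometric series as $L(2s+\tfrac12,\pi\times\Omega_1|_{F^\times})/L(2s+1,\Omega_1\Omega_2^{-1})$. One small imprecision: the way you phrase the support calculation is a bit off — $\eta\mat{\varpi^m}{}{}{1}w$ is not literally $\mat{-\varpi^m}{}{}{-1}\mat{}{1}{-1}{}$ times an element of $\mat{1}{0}{1}{1}K_1(\P_L)$; rather (using $\eta\mat{\varpi^m}{}{}{1}w=\mat{0}{\varpi^m}{-1}{\beta\varpi^m}$) one writes it as $\mat{\varpi^m}{-\varpi^m}{0}{1}\mat{1}{0}{1}{1}k_1$ with $k_1\in K_1(\P_L)$, so the Borel part has diagonal $(\varpi^m,1)$ (equivalently $(-\varpi^m,-1)$ after pulling out the central $-I$, which contributes nothing since $\Omega_1\Omega_2(-1)=\omega_\pi^{-1}(-1)=1$). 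The value $f(\eta\mat{\varpi^m}{}{}{1}w,s)=\Omega_1(\varpi)^m q^{-m(2s+1)}$ you state is correct. Also, the concern you raise about the split case is genuine but resolves cleanly: since $\omega_\pi$ and $\Omega_2$ are unramified, $\Omega_1|_{F^\times}$ is unramified, which in the split setting forces both components of $\Omega_1$ to be ramified; hence $L(2s+1,\Omega_1\Omega_2^{-1})=1$ in every case and the $L$-factor matching is as stated.
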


\subsubsection{Integrating against an unramified principal series}

In this section we consider the case that the characters $\Omega_1$ and $\Omega_2$ are unramified, and $I(\Omega_1,\Omega_2,s)$ is irreducible.

In this setting, the condition $\Omega|_{F^{\times}}=\chi^2$ gives that $\Omega=\chi\circ N_{L/F}$ when $L/F$ is unramified, hence $\pi$ does not have an $(S,\Omega)$-Waldspurger model. Similarly, when $L/F$ is ramified we have that $\Omega=\chi'\chi\circ N_{L/F}$ where $\chi'$ is either trivial or the unique unramified quadratic character. It is only in the latter case that $\pi$ has an $(S,\Omega)$-Waldspurger model. Finally, when $L/F$ is split, we simply apply Thm. 1.6 from \cite{FMP} to see that $\pi$ has an $(S,\Omega)$-Waldspurger model.

If we choose the unramified section
\begin{equation}\label{unramnew}
f(\mat{a}{\ast}{}{d}k) = \Omega_1(a) \Omega_2(d) |a/d|_L^{s+1/2}, \text{ for } \mat{a}{\ast}{}{d} \in B(L) \text{ and } k \in H(\OF_L),\end{equation}
then an inner $K$-integral gives a vector in $\pi$ that is spherical, which is impossible. Hence, for that choice of $f$, we have $Z(s) = 0$.

Alternatively, considering the same $\Omega_1,\Omega_2$, we also calculate the local zeta integral by choosing the following section
$$\hat{f}(bk)=f(bkg,s),\text{ for }b\in B(L),k\in\GL_2(\OF_L),g=\mat{\varpi_L^{-1}}{}{}{1},$$
where $f$ is the section from (\ref{unramnew}). Note, this is an old vector in $I(\Omega_1,\Omega_2,s)$.

We present the following calculation, which will be needed to evaluate the zeta integral.
\begin{proposition}
Let $f$ be as above. Then, $\int\limits_{T(F)\backslash T(F)K}\hat{f}(\eta h, s) B_0(h) dh$ is equal to
$$\left\{\begin{array}{ll}\frac{-\Omega_1(\varpi_L)^{-1}q^{s+3/2}}{q+1}(1-\Omega_1(\varpi_L)\Omega_2(\varpi_L)^{-1}q^{-2s-1})&\text{if }\Big(\frac L\p\Big)=0\\
\frac{-\Omega_1(\varpi,1)^{-1}q^{s+1/2}}{q+1}(1-\Omega_1(\varpi,1)\Omega_2(\varpi,1)^{-1}q^{-2s-1})&\text{if }\Big(\frac L\p\Big)=1\text{ and }B_0(w)=0\\
\frac{-(q-1)\Omega_1(\varpi,1)^{-1}q^{s+1/2}}{(q+1)(1-\chi(\varpi)^{-1}\Omega(1,\varpi))}(1-\Omega_1(\varpi,1)\Omega_2(\varpi,1)^{-1}q^{-2s-1})&\text{if }\Big(\frac L\p\Big)=1\text{ and }B_0(w)=1\end{array}\right\}.$$
\end{proposition}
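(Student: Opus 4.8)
The plan is to evaluate the inner $K$-integral $\int_{T(F)\backslash T(F)K}\hat f(\eta h,s)B_0(h)\,dh$ by breaking $T(F)\backslash T(F)K$ into the finitely many $I$-cosets that appear in the $m=0$ part of the Iwahori decomposition \eqref{Iwahori-GL2-decomp}, namely $T(F)wI$ together with $T(F)\mat{1}{}{u_0}{1}I$ in the ramified case and $T(F)\mat{1}{}{u_1}{1}I\sqcup T(F)\mat{1}{}{u_2}{1}I$ in the split case. On each piece $B_0$ is constant (it is right $I$-invariant), so the integral becomes a sum $\sum V_{\ast,0}\,\hat f(\eta \cdot \ast,s)\,B_0(\ast)$, with the volumes $V_{\ast,0}$ supplied by Lemma \ref{Steinberg-volumes} and the values $B_0(\ast)$ by Lemma \ref{FMPlemma} and Lemma \ref{split-Steinberg}. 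Since $\big(\frac L\p\big)=-1$ gives $\Omega=\chi\circ N_{L/F}$ and hence no Waldspurger model, only the cases $\big(\frac L\p\big)=0$ and $\big(\frac L\p\big)=1$ occur, which explains the case split in the statement.

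First I would compute, for each representative $\ast\in\{w,\mat{1}{}{u_0}{1},\mat{1}{}{u_i}{1}\}$, the Iwasawa decomposition of $\eta\ast g$ over $L$, where $g=\mat{\varpi_L^{-1}}{}{}{1}$, so that $\hat f(\eta \ast,s)=f(\eta\ast g,s)$ can be read off from \eqref{ind-rep-defn}: one writes $\eta\ast g=\mat{a}{*}{}{d}k$ with $k\in H(\OF_L)$ and gets $\Omega_1(a)\Omega_2(d)|a/d|_L^{s+1/2}$. This is the same style of matrix manipulation used repeatedly in Section \ref{ExplicitValuesUnram} and in the proof of Lemma \ref{split-Steinberg}; the presence of $g=\mat{\varpi_L^{-1}}{}{}{1}$ on the right is what produces the extra powers of $q^{s+1/2}$ (resp.\ $q^{s+3/2}$) and the factors $\Omega_1(\varpi_L)^{-1}$, $\Omega_1(\varpi,1)^{-1}$ in the answer. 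In the split case one must be careful that the two representatives $\mat{1}{}{u_1}{1}$ and $\mat{1}{}{u_2}{1}$ are attached to the two factors of $L=F\oplus F$ in the two possible ways; keeping track of which $u_i$ pairs with which embedding of $\sqrt{\mathbf d}$ is what distinguishes $\Omega_1(\varpi,1)$ from $\Omega_1(1,\varpi)$ and, together with the $B_0$-values from Lemma \ref{split-Steinberg}(iii), assembles the factor $\frac{q-1}{1-\chi(\varpi)^{-1}\Omega(1,\varpi)}$.

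Then I would substitute: in the ramified case, $\int = V_{wI,0}\,\hat f(\eta w,s)\,B_0(w)+V_{\mat{1}{}{u_0}{1}I,0}\,\hat f(\eta\mat{1}{}{u_0}{1},s)\,B_0(\mat{1}{}{u_0}{1})$ with $B_0(w)=1$, $B_0(\mat{1}{}{u_0}{1})=-q$ (Lemma \ref{FMPlemma}(iii), $c(\Omega)=0$) and $V_{wI,0}=\frac{q(1-(\frac L\p)q^{-1})}{q+1}$, $V_{\mat{1}{}{u_0}{1}I,0}=\frac1{q+1}$; collecting the two Iwasawa values and factoring out $\Omega_1(\varpi_L)^{-1}q^{s+3/2}/(q+1)$ should produce $1-\Omega_1(\varpi_L)\Omega_2(\varpi_L)^{-1}q^{-2s-1}$. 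In the two split sub-cases one does the analogous bookkeeping with $V_{wI,0}$, $V_{\mat{1}{}{u_i}{1}I,0}=\frac1{q+1}$ and the $B_0$-values from Lemma \ref{split-Steinberg}(ii) (when $B_0(w)=0$, only the two $\mat{1}{}{u_i}{1}$ terms survive, and they are negatives of each other) or (iii) (when $B_0(w)=1$). I expect the main obstacle to be purely organizational rather than conceptual: correctly normalizing $\eta$, $\beta$, $\xi_0$ relative to the split decomposition $L=F\oplus F$ and tracking signs and which coordinate of $\Omega$ is which through the Iwasawa decompositions of $\eta\mat{1}{}{u_i}{1}g$, so that the three displayed formulas come out exactly as stated rather than with the two factors swapped. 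A useful consistency check at the end is that each expression has the shape $(\text{volume/Iwasawa prefactor})\times(1-\Omega_1\Omega_2^{-1}(\cdot)q^{-2s-1})$, matching the Euler-factor denominator $L(2s+1,\Omega_1\Omega_2^{-1})$ that will appear when this proposition is fed into the final zeta-integral computation.
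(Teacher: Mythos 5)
Your proposal is correct and follows essentially the same route as the paper: break the $m=0$ part of the Iwahori decomposition \eqref{Iwahori-GL2-decomp} into the cosets $T(F)wI$ (and $T(F)\mat{1}{}{u_0}{1}I$ or $T(F)\mat{1}{}{u_i}{1}I$), pull in the volumes from Lemma \ref{Steinberg-volumes} and the $B_0$-values from Lemmas \ref{FMPlemma} and \ref{split-Steinberg}, and evaluate $\hat f(\eta\ast,s)=f(\eta\ast g,s)$ by Iwasawa-decomposing $\eta\ast g$ over $L$, which is exactly what the paper's matrix identities such as \eqref{etaw-ident} encode. The only thing left implicit in your write-up is that one also needs $\hat f$ to be right $I$-invariant (which the paper records via $I\subseteq\mat{\varpi_L^{-1}}{}{}{1}H(\OF_L)\mat{\varpi_L}{}{}{1}$) so that the integrand is genuinely constant on each coset, but this is a one-line observation and does not change the substance.
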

\begin{proof}
Suppose that $L/F$ is ramified. Note, in this case we have $B_0(\mat{1}{}{u_0}{1})=-q$ by Lemma \ref{FMPlemma}. Furthermore, the integral breaks up as
$$\int\limits_{T(F)\backslash T(F)K}\hat{f}(\eta h, s) B_0(h)dh=\int\limits_{T(F)\backslash T(F)w I}\hat{f}(\eta h, s) B_0(h)dh+\!\!\!\!\!\!\!\!\!\!\int\limits_{T(F)\backslash T(F)\mat{1}{}{u_0}{1} I}\hat{f}(\eta h, s) B_0(h)dh.$$
In order to compute the first integral, we use that $\hat{f}$ is right invariant under $ I$, which follows from
$$ I\subseteq\mat{\varpi_L^{-1}}{}{}{1}H(\OF_L)\mat{\varpi_L}{}{}{1},$$
We also need the matrix identity
\begin{equation}\label{etaw-ident}
\eta w=\mat{\varpi_L}{}{}{\varpi_{L}^{-1}}\mat{\varpi_L^{-1}}{}{}{1}\mat{}{1}{-1}{\beta\varpi_L}\mat{\varpi_L}{}{}{1}.\end{equation}
Combining with the volume calculation in Lemma \ref{Steinberg-volumes} we have
$$\int\limits_{T(F)\backslash T(F)w I}\hat{f}(\eta h, s) B_0(h)dh=V_{w I,0}\Omega_2(\varpi_L)^{-1}|\varpi_L|_L^{s+1/2}B_0(w)
=\frac{q^{-s+1/2}}{q+1}\Omega_2(\varpi_L)^{-1}.
$$
The evaluation of the second integral follows by noting that 
$$\eta\mat{1}{}{u_0}{1}=\mat{\varpi_L^{-1}}{}{}{1}\mat{1}{}{\varpi_{L}^{-1}(\beta+u_0)}{1}\mat{\varpi_L}{}{}{1}\in \mat{\varpi_L^{-1}}{}{}{1}H(\OF_L)\mat{\varpi_L}{}{}{1}.$$
From which we obtain
$$\int\limits_{T(F)\backslash T(F)\mat{1}{}{u_0}{1} I}\hat{f}(\eta h, s) B_0(h)dh=\frac{-q^{s+3/2}}{q+1}\Omega_1(\varpi_L)^{-1}.$$
Combining, we have
$$\int\limits_{T(F)\backslash T(F)K}\hat{f}(\eta h, s) B_0(h)dh=\frac{-q^{s+3/2}\Omega_1(\varpi_L)^{-1}}{q+1}(1-\Omega_1(\varpi_L)\Omega_2(\varpi_L)^{-1}q^{-2s-1}).$$
The split case is computed in a similar way.
\end{proof}

In order to calculate $Z(s)$ we also need the following integral calculations, which we calculate using Lemma \ref{Steinberg-volumes} and Lemma \ref{FMPlemma},
$$\int\limits_{T(F)\backslash T(F) \mat{\varpi^m}{}{}{1} I}\!\!\!\!\!\!\!\!\!\!\!\!\!\!\!\!\!\!\!\! \hat{f}(\eta h, s) B_0(h) dh=\frac{-(1-\Big(\frac L\p\Big)q^{-1})q}{q+1}\Omega_1(\varpi_L)^{-1}q^{s+1/2}(\Omega_1(\varpi)\chi(\varpi)q^{-2s-1})^mB_0(w),$$
$$\int\limits_{T(F)\backslash T(F) \mat{\varpi^m}{}{}{1}w I}\!\!\!\!\!\!\!\!\!\!\!\!\!\!\!\!\!\!\!\! \hat{f}(\eta h, s) B_0(h) dh=\frac{(1-\Big(\frac L\p\Big)q^{-1})q}{q+1}\Omega_2(\varpi_L)^{-1}q^{-s-1/2}(\Omega_1(\varpi)\chi(\varpi)q^{-2s-1})^mB_0(w).$$
Combining this with the previous proposition we have the following theorem.
\begin{theorem}\label{stein-unramif-int-thm}
Let $\pi=\chi\St_{\GL_2}$ with $\chi$ an unramified character of $F^{\times}$. Let $\Omega_1$, $\Omega_2$, and $\hat{f}$ be as above. Then,
$$Z(s) = -\frac{q^{s+\frac 12}}{q+1} \frac{L(2s+1/2,\pi\times\Omega_1|_{F^{\times}})}{L(2s+1,\Omega_1\Omega_2^{-1})} \times \begin{cases}q \Omega_1(\varpi_L)^{-1} & \text{ if } \Big(\frac L\p\Big)=0;\\
\Omega_1(\varpi,1)^{-1} &\text{ if }\Big(\frac L\p\Big)=1\text{ and }B_0(w)=0;\\
\frac{(q-1) \Omega_1(\varpi,1)^{-1}}{1-\chi(\varpi)^{-1}\Omega(1,\varpi)} &\text{ if }\Big(\frac L\p\Big)=1\text{ and }B_0(w)=1.
\end{cases}$$
%
\end{theorem}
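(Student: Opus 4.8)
The plan is to evaluate $Z(s)$ directly from (\ref{zeta-Steinberg}), with $f$ replaced by $\hat f$: this writes $Z(s)$ as the sum over $m\geq1$ of the two Iwahori-cell integrals over $T(F)\backslash T(F)\mat{\varpi^m}{}{}{1}\I$ and over $T(F)\backslash T(F)\mat{\varpi^m}{}{}{1}w\I$, plus the single integral over $T(F)\backslash T(F)K$. The latter is exactly the content of the Proposition immediately above (in its three sub-cases), and the two former are the two displayed formulas immediately preceding the theorem. The structural point is that both $m\geq1$ integrals carry a factor $B_0(w)$; hence in the split case with $\Omega(1,\varpi)=\chi(\varpi)$, where $B_0(w)=0$ by Lemma~\ref{split-Steinberg}(ii), the whole $m\geq1$ tail vanishes and $Z(s)$ equals the $K$-integral alone. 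For that sub-case one has $Z(s)=\frac{-\Omega_1(\varpi,1)^{-1}q^{s+1/2}}{q+1}\bigl(1-\Omega_1(\varpi,1)\Omega_2(\varpi,1)^{-1}q^{-2s-1}\bigr)$, and it rewrites as the claimed expression once one notes $\chi(\varpi)\Omega_1(\varpi)=\Omega_1\Omega_2^{-1}(1,\varpi)$: this follows from $\Omega_1(\varpi)=\Omega_1(\varpi,1)\Omega_1(1,\varpi)$ (the diagonal embedding $F^\times\hookrightarrow L^\times$), from $\Omega(1,\varpi)=\Omega_1(\varpi,1)^{-1}\Omega_2(1,\varpi)^{-1}$ (which is (\ref{Omega-defn}) together with the fact that the involution swaps the two factors of $L=F\oplus F$), and from the standing hypothesis $\Omega(1,\varpi)=\chi(\varpi)$. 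With the $L$-factor conventions recorded in Theorem~\ref{local-unram-int-thm}, this is the middle case.

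For the remaining two sub-cases one has $B_0(w)=1$. Here I would put $X=\chi(\varpi)\Omega_1(\varpi)q^{-2s-1}$ and $X'=\Omega_1(\varpi_L)\Omega_2(\varpi_L)^{-1}q^{-2s-1}$ (with $\varpi_L=(\varpi,1)$ in the split case), sum the geometric series $\sum_{m\geq1}X^m=X/(1-X)$, and note the two $m\geq1$ integrals combine into
$$\frac{q-\Big(\frac L\p\Big)}{q+1}\Bigl(-\Omega_1(\varpi_L)^{-1}q^{s+\frac12}+\Omega_2(\varpi_L)^{-1}q^{-s-\frac12}\Bigr)\frac{X}{1-X}=-\frac{q-\Big(\frac L\p\Big)}{q+1}\,\Omega_1(\varpi_L)^{-1}q^{s+\frac12}\,(1-X')\,\frac{X}{1-X},$$
the last equality using only the definition of $X'$. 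Meanwhile the $K$-integral from the Proposition has the shape $-C(1-X')$ for an explicit constant $C$ depending on the sub-case, so writing the cell-sum coefficient above as $-C'(1-X')$ accordingly,
$$Z(s)=-C(1-X')-C'(1-X')\frac{X}{1-X}=-C(1-X')\,\frac{1-X+(C'/C)\,X}{1-X}.$$
In the ramified case a direct check gives $C'=C$, so the numerator is $1$; in the split case a direct check gives $C'/C=1-\chi(\varpi)^{-1}\Omega(1,\varpi)$, and then $\chi(\varpi)^{-1}\Omega(1,\varpi)\,X=\Omega_1\Omega_2^{-1}(1,\varpi)q^{-2s-1}=:X''$ follows once more from $\Omega(1,\varpi)=\Omega_1(\varpi,1)^{-1}\Omega_2(1,\varpi)^{-1}$ and $\Omega_1(\varpi)=\Omega_1(\varpi,1)\Omega_1(1,\varpi)$, so that the numerator is $1-X''$.

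It remains only to recognize the $L$-factors: $1-X=L(2s+\tfrac12,\pi\times\Omega_1|_{F^\times})^{-1}$ is the unramified local factor of $\chi\St$ twisted by the unramified character $\Omega_1|_{F^\times}$, while $1-X'$ in the ramified case and $(1-X')(1-X'')$ in the split case are precisely $L(2s+1,\Omega_1\Omega_2^{-1})^{-1}$ by the Euler products in Theorem~\ref{local-unram-int-thm}; the surviving constant $C$ matches, in each sub-case, the constant displayed in the theorem, and reading off the three cases finishes the argument. I expect the only real work to be this final algebraic collapse together with the bookkeeping needed to keep the three sub-cases, the two uniformizer normalizations $\varpi_L$, and the many separate occurrences of $\Omega_1$ versus $\Omega_2$ under control; conceptually there is nothing new, since all the representation-theoretic ingredients --- Lemma~\ref{FMPlemma}, Lemma~\ref{split-Steinberg}, the two cell integrals, and the $K$-integral Proposition --- are already assembled.
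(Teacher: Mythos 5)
Your proposal is correct and follows the same route the paper sketches: decompose $Z(s)$ via (\ref{zeta-Steinberg}) into the $K$-cell contribution (handled by the Proposition) plus the $m\geq1$ Iwahori-cell contributions (handled by the two displayed formulas, both proportional to $B_0(w)$), then collapse the geometric series into the quotient of $L$-factors. The paper only says ``combining this with the previous proposition we have the following theorem'' and leaves the algebraic reduction implicit; your write-up usefully spells out the bookkeeping with $X$, $X'$, $X''$ and the character identities $\Omega_1(\varpi)=\Omega_1(\varpi,1)\Omega_1(1,\varpi)$ and $\Omega(1,\varpi)=\Omega_1(\varpi,1)^{-1}\Omega_2(1,\varpi)^{-1}$, and I verified that the constants $C$, $C'$ and the factorization match the three cases in the theorem.
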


\section{Local archimedean case}\label{ExplicitValuesArch}
Now, let $F = \R$. Let $K = \SO(2)$ be the maximal compact subgroup of $H(\R)$. For $\ell \geq 1$, let $\pi$ be the discrete series representation ${\mathcal D}_\mu(\ell)$. This representation has the lowest non-negative weight $\ell$ and central character $\mat{u}{}{}{u} \mapsto \sgn(u)^{\ell}|u|^\mu$. We need to obtain the values of a weight $\ell$ vector $B_0$ in the Waldspurger model of ${\mathcal D}_\mu(\ell)$. We obtain a differential equation satisfied by $B_0$ using the fact that $B_0$ is annihilated by the lowering operator. For this, first recall that the Lie algebra ${\mathfrak g} = {\mathfrak sl}(2,\R)$ of $\SL(2,\R)$ is generated by
 $$D = \mat{1}{}{}{-1}, \qquad E = \mat{}{1}{}{}, \qquad F = \mat{}{}{1}{},$$
 and the lowering operator $L$ is an element of the complexified Lie algebra ${\mathfrak g}_\C$ and is defined by 
 \begin{equation}\label{L-operator-defn}
 L = \frac 12 \mat{1}{-i}{-i}{1} = \frac 12 \big(D - iE - iF\big).
 \end{equation}
 An element $X \in {\mathfrak g}$ acts on $B_0$ by
 \begin{equation}\label{Lie-alg-action}
 (X.B_0)(g) = \frac d{dt} \Big|_{t=0} B_0(g \exp(tX)).
 \end{equation}
 We will follow the ideas from \cite{PS2}. We will consider two special cases here corresponding to $S = \pm \mat{1}{}{}{1}$ and $S = \pm \mat{1}{}{}{-1}$, the non-split and split case respectively. 
\subsection{The non-split case}
Let $S = \pm \mat{1}{}{}{1}$. Then
$$T(\R) = \{ \mat{x}{y}{-y}{x} : x, y \in \R, x^2+y^2 \neq 0\} \simeq \C^\times$$
by the isomorphism $\mat{x}{y}{-y}{x} \mapsto x+iy$. We see that any element of $t \in T(\R)$ can be written as 
$$t = \mat{\gamma}{}{}{\gamma} r(\delta), \qquad \text{ where } \gamma > 0, \qquad r(\delta) = \mat{\cos(\delta)}{\sin(\delta)}{-\sin(\delta)}{\cos(\delta)}, \text{ with } \delta \in \R.$$
Let $\Omega$ be a character of $\C^\times$ given by
\begin{equation}\label{Omega-defn-arch-ns}
\Omega(\mat{\gamma}{}{}{\gamma} r(\delta)) = \gamma^\mu e^{im\delta},
\end{equation}
for some $m \in \Z$. Notice that we want $\Omega |_{\R^\times} = \omega_\pi$, and hence we must have $m \equiv \ell \pmod{2}$. 
By the Cartan decomposition, we have (see (18) of \cite{PS2})
\begin{align*}
\GL(2,\R) &= \GL(2,\R)^+ \sqcup \mat{-1}{}{}{1} \GL(2,\R)^+ \\
&= T(\R) \{\mat{\zeta}{}{}{\zeta^{-1}}: \zeta \geq 1\} \SO(2) \sqcup T(\R) \mat{-1}{}{}{1} \{\mat{\zeta}{}{}{\zeta^{-1}}: \zeta \geq 1\}  \SO(2).
\end{align*}
Let us assume that $\pi = {\mathcal D}_\mu(\ell)$ is given by its $(\Omega,S)$-Waldspurger model. Let $B_0 \in \pi$ be weight $\ell$ vector. Hence, we have
$$B_0(tgr(\theta)) = \Omega(t) e^{i\ell \theta} B_0(g).$$
If $B_0(1) \neq 0$, then we get the necessary condition that $m=\ell$. If $B_0(\mat{-1}{}{}{1}) \neq 0$, then, using $\mat{-1}{}{}{1} r(\delta) \mat{-1}{}{}{1} = r(-\delta)$, we get the necessary condition that $m = -\ell$.  In the first case, support of $B_0$ is contained in $\GL(2,\R)^+$ and in the latter case, the support of $B_0$ is contained in $\mat{-1}{}{}{1} \GL(2,\R)^+$. Let us first consider the case $m = \ell$. Let us set $f(\zeta) := B_0(\mat{\zeta}{}{}{\zeta^{-1}})$ for $\zeta \geq 1$. We wish to obtain the action of $L$ on $B_0$. For this, suppose
 $$\mat{\zeta}{}{}{\zeta^{-1}} \exp(tX) = \mat{\gamma(t)}{}{}{\gamma(t)} r(\delta(t)) \mat{\zeta(t)}{}{}{\zeta(t)^{-1}} r(\theta(t)),$$
 where $\gamma(t), \delta(t), \zeta(t)$ and $\theta(t)$ are smooth functions with $\gamma(0) = 1, \delta(0) = \theta(0) = 0$ and $\zeta(t) = \zeta$. Then
 \begin{align}
 (X.B_0)(\mat{\zeta}{}{}{\zeta^{-1}}) &= \frac d{dt} \Big|_{t=0} B_0(\mat{\gamma(t)}{}{}{\gamma(t)} r(\delta(t)) \mat{\zeta(t)}{}{}{\zeta(t)^{-1}} r(\theta(t))) \nonumber \\
 &= \frac d{dt} \Big|_{t=0} \gamma(t)^\mu e^{i\ell (\theta(t)+\delta(t))} f(\zeta(t)) \nonumber \\
 &= \Big(\mu \gamma'(0)+i \ell (\theta'(0) + \delta'(0))\Big) f(\zeta) + \zeta'(0) f'(\zeta). \label{X-action}
 \end{align}
Hence, we need to find the values of the derivatives at $0$ of the above functions for $X = D, E, F$. 
\vskip 0.2in
\noindent{\bf $X = D$ case}:  Let $X=D$. Then $\exp(tD) = \mat{e^t}{}{}{e^{-t}}$. Hence, $\gamma(t) = 1, \delta(t) = \theta(t) = 0$ for all $t$ and $\zeta(t) = \zeta e^t$. Hence \eqref{X-action} gives us
\begin{equation}\label{D-action}
(D.B_0)(\mat{\zeta}{}{}{\zeta^{-1}}) = \zeta f'(\zeta).
\end{equation}
\vskip 0.2in
\noindent{\bf $X = E$ case}:  Let $X=E$. Then $\exp(tE) = \mat{1}{t}{}{1}$. Hence
\begin{equation}\label{some-eqn}
\mat{\zeta}{}{}{\zeta^{-1}} \mat{1}{t}{}{1} = r(\delta(t)) \mat{\zeta(t)}{}{}{\zeta(t)^{-1}} r(\theta(t)).
\end{equation}
We recall the following lemma and proof from the expanded version of \cite{PSS}. 
\begin{lemma}\label{arch-toric-decomp}
 Let $h = \mat{y}{x}{}{y^{-1}}$ with $y \neq 0$. Then $h = k_1 \mat{z}{}{}{z^{-1}} k_2$, with $k_1, k_2 \in \SO(2)$ and
 $$
  z^2 = \frac{1+x^2y^2+y^4 + \sqrt{(1+x^2y^2+y^4)^2-4y^4}}{2y^2}.
 $$
\end{lemma}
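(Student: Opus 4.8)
The plan is to recognize this as the singular value decomposition of $h$, rephrased through the characteristic polynomial of $h^t h$. Since $\det h = y\cdot y^{-1} = 1 > 0$, the matrix $h$ admits a decomposition $h = k_1\mat{z}{}{}{z^{-1}}k_2$ with $k_1,k_2\in\SO(2)$; granting this for the moment, set $Q := h^t h = k_2^{-1}\mat{z^2}{}{}{z^{-2}}k_2$, so that $z^2$ and $z^{-2}$ are exactly the eigenvalues of the symmetric positive-definite matrix $Q$.

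The main computation is then immediate: writing $h=\mat{y}{x}{}{y^{-1}}$ gives $Q = \mat{y^2}{xy}{xy}{x^2+y^{-2}}$, whose trace is $y^2+x^2+y^{-2}$ and whose determinant is $y^2(x^2+y^{-2})-x^2y^2 = 1$. Hence $z^2$ is a root of $t^2-(y^2+x^2+y^{-2})t+1=0$, and choosing the larger root yields $z^2 = \tfrac12\big(y^2+x^2+y^{-2}+\sqrt{(y^2+x^2+y^{-2})^2-4}\,\big)$. Multiplying numerator and denominator by $y^2$ and using the identity $y^4\big((y^2+x^2+y^{-2})^2-4\big) = (1+x^2y^2+y^4)^2-4y^4$ transforms this into the asserted formula.

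It remains to justify that the orthogonal factors lie in $\SO(2)$ and not merely in $\OO(2)$. I would diagonalize $Q=h^th$ by a positively oriented orthonormal eigenbasis, writing $Q = k_2^{-1}\Lambda k_2$ with $k_2\in\SO(2)$ and $\Lambda=\mathrm{diag}(z^2,z^{-2})$ (the case $z=1$, i.e.\ $x=0$, $y=\pm1$, being trivial since then $h\in\OO(2)$ already). Then put $k_1 := h\,k_2^{-1}\Lambda^{-1/2}$ and check $k_1^t k_1 = \Lambda^{-1/2}k_2(k_2^t\Lambda k_2)k_2^t\Lambda^{-1/2} = I$ together with $\det k_1 = \det h/\det k_2 = 1$, so that $k_1\in\SO(2)$ and $h = k_1\Lambda^{1/2}k_2$ as required. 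This orientation bookkeeping — the only place the hypothesis $\det h = 1$ is used — is the sole mild subtlety; everything else is a direct $2\times2$ matrix calculation, so there is no genuine obstacle here.
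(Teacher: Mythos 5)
Your proof is correct, and it takes a genuinely different route from the paper's. The paper deduces the quadratic $y^2z^4-(1+x^2y^2+y^4)z^2+y^2=0$ by letting $h=k_1\mathrm{diag}(z,z^{-1})k_2$ act on the point $i$ in the upper half plane via fractional linear transformations, then comparing real and imaginary parts after writing $k_1$ in terms of $\cos\delta,\sin\delta$; the case $x=0$ is handled separately. You instead observe that $z^{\pm2}$ are the eigenvalues of the symmetric matrix $h^th$, compute its trace $y^2+x^2+y^{-2}$ and determinant $1$, and read the same quadratic off the characteristic polynomial. Your argument is purely linear-algebraic, uniform in $x$ (no special case), and isolates the one genuine subtlety — arranging for $k_1,k_2$ to lie in $\SO(2)$ rather than $\OO(2)$, which uses $\det h=1$. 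The paper's hyperbolic-geometry derivation blends better with the rest of its archimedean computations (which repeatedly use the action on $\mathcal H$), but as a proof of this particular lemma your spectral-theorem argument is if anything more direct, since there is no need to quotient two trigonometric identities or exclude $x=0$.
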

\begin{proof}
We may assume that $x\neq0$. By the Cartan decomposition of $\SL_2(\R)$, there exist $k_1, k_2 \in \SO(2)$ and $z > 1$ such that $h = k_1 \mat{z}{}{}{z^{-1}} k_2$. Write $k_1 = \mat{\cos(\delta)}{\sin(\delta)}{-\sin(\delta)}{\cos(\delta)}$ for $\delta \in [0, 2\pi)$. Applying both sides of $h = k_1 \mat{z}{}{}{z^{-1}} k_2$ to $i$ as fractional linear transformations, and using that $\SO(2)$ stabilizes $i$, we get
$$
 y^2i+xy = \frac{\cos(\delta)z^2 i + \sin(\delta)}{-\sin(\delta)z^2 i + \cos(\delta)}.
$$
Simplifying and comparing the coefficients of $i$ and the constant terms, we get
$$
 -z^2xy \sin(\delta)  = \cos(\delta) (z^2-y^2), \qquad (1-z^2y^2) \sin(\delta) = xy \cos(\delta).
$$
Note that, since $x, y \neq 0$, we have $\sin(\delta), \cos(\delta) \neq 0$ and $y \neq \pm z, \pm 1/z$. Hence, we can divide the above two equations and after simplification obtain $y^2 z^4 - (1+x^2y^2+y^4) z^2 + y^2 = 0$, which gives the lemma.
\end{proof}

Hence, we get 
$$\zeta(t)^2 = \frac{1+\zeta^4t^2+\zeta^4 + \sqrt{(1+\zeta^4t^2+\zeta^4)^2-4\zeta^4}}{2\zeta^2}, \qquad (1-\zeta^2(t) \zeta^2) \sin(\delta(t)) = \zeta^2t \cos(\delta(t)).$$
Using implicit differentiation, we get 
$$\zeta'(0) = 0, \qquad \delta'(0) = \frac{\zeta^2}{1-\zeta^4}.$$ 
Now, comparing the $(2,1)$ coefficient of both sides of \eqref{some-eqn} and using implicit differentiation, we get
$$\theta'(0) = -\frac{\delta'(0)}{\zeta^2} = - \frac 1{1-\zeta^4}.$$
Substituting into \eqref{X-action}, we get
\begin{equation}\label{E-action}
(E.B_0)(\mat{\zeta}{}{}{\zeta^{-1}}) = \frac{-i\ell}{1+\zeta^2} f(\zeta).
\end{equation}
\vskip 0.2in
\noindent{\bf $X = F$ case}:  Let $X=F$. Then $\exp(tF) = \mat{1}{}{t}{1}$. We have
$$\mat{\zeta}{}{}{\zeta^{-1}} \mat{1}{}{t}{1} = \mat{}{-1}{1}{} \mat{\zeta^{-1}}{-\zeta^{-1}t}{}{\zeta} \mat{}{1}{-1}{} = r(3\pi/2) \mat{\zeta^{-1}}{-\zeta^{-1}t}{}{\zeta} r(\pi/2).$$
Arguing as in the $X=E$ case, we get
$$\zeta'(0) = 0, \qquad \delta'(0) = \frac{\zeta^2}{1-\zeta^4}, \qquad \theta'(0) = -\frac{\zeta^4}{1-\zeta^4},$$
which gives us
\begin{equation}\label{F-action}
(F.B_0)(\mat{\zeta}{}{}{\zeta^{-1}}) = \frac{i\ell\zeta^2}{1+\zeta^2} f(\zeta).
\end{equation}
Using the definition \eqref{L-operator-defn} and the formulas \eqref{D-action}, \eqref{E-action} and \eqref{F-action}, we get
\begin{equation}\label{L-action}
(L.B_0)(\mat{\zeta}{}{}{\zeta^{-1}}) = \frac 12 \Big(\zeta f'(\zeta) - \ell \frac{1-\zeta^2}{1+\zeta^2} f(\zeta)\Big).
\end{equation}

\begin{proposition}\label{arch-non-split-values}
Let $\Omega$ be a character of $T(\R)$ given by $(\mu, m) \in \C \times \Z$ as defined in \eqref{Omega-defn-arch-ns}. For $\ell > 0$, let ${\mathcal D}_\mu(\ell)$ be the lowest weight discrete series module of $H(\R)$. If ${\mathcal D}_\mu(\ell)$ has a $(\Omega, S)$-Waldspurger model then the model is unique and $m = \pm \ell$. The lowest weight vector $B_0$ in the $(\Omega, S)$-Waldspurger model of ${\mathcal D}_\mu(\ell)$ is given by
\begin{equation}\label{arch-ns-values-eqn}
B_0(g) =  \gamma^\mu e^{i \ell(\delta + \theta)} \Big(\frac{\zeta}{1+\zeta^2}\Big)^\ell,
\end{equation}
if
$$ m=\ell, g=\mat{\gamma}{}{}{\gamma} r(\delta)\mat{\zeta}{}{}{\zeta^{-1}} r(\theta), \qquad \text{ OR } \qquad m=-\ell, g=\mat{\gamma}{}{}{\gamma} r(\delta)\mat{-\zeta}{}{}{\zeta^{-1}} r(\theta),$$
and $B_0(g) = 0$, otherwise.
\end{proposition}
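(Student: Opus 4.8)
The plan is to set up the first-order ODE coming from the lowering operator and solve it, then handle the normalization and the uniqueness/parity statement. First I would argue the parity statement $m = \pm\ell$ exactly as sketched in the text preceding the proposition: since $B_0$ has weight $\ell$ under $\SO(2)$ on the right, and by the Cartan decomposition $\GL(2,\R)^+ = T(\R)\{\mathrm{diag}(\zeta,\zeta^{-1}):\zeta\geq 1\}\SO(2)$, evaluating the transformation law $B_0(tg r(\theta)) = \Omega(t)e^{i\ell\theta}B_0(g)$ at $g = 1$ forces $m = \ell$ (using $r(\delta)\in T(\R)$ is simultaneously an element on which $\Omega$ acts by $e^{im\delta}$ and a right rotation acting by $e^{i\ell\delta}$), while evaluating at $g = \mathrm{diag}(-1,1)$ and using $\mathrm{diag}(-1,1)r(\delta)\mathrm{diag}(-1,1) = r(-\delta)$ forces $m = -\ell$. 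These two cases are mutually exclusive (since $\ell > 0$), and in the first case $B_0$ is supported on $\GL(2,\R)^+$, in the second on $\mathrm{diag}(-1,1)\GL(2,\R)^+$; by symmetry it suffices to treat $m = \ell$.

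Next, for the case $m = \ell$, I would use that $B_0$, being the lowest weight vector of ${\mathcal D}_\mu(\ell)$, is annihilated by the lowering operator $L$. By equation \eqref{L-action}, $(L.B_0)(\mathrm{diag}(\zeta,\zeta^{-1})) = \tfrac12\big(\zeta f'(\zeta) - \ell\tfrac{1-\zeta^2}{1+\zeta^2}f(\zeta)\big) = 0$, where $f(\zeta) = B_0(\mathrm{diag}(\zeta,\zeta^{-1}))$. This is a separable first-order linear ODE: $f'/f = \tfrac{\ell}{\zeta}\cdot\tfrac{1-\zeta^2}{1+\zeta^2}$. Integrating, $\tfrac{1}{\zeta}\cdot\tfrac{1-\zeta^2}{1+\zeta^2} = \tfrac{1}{\zeta} - \tfrac{2\zeta}{1+\zeta^2}$, so $\log f = \ell\big(\log\zeta - \log(1+\zeta^2)\big) + \text{const}$, giving $f(\zeta) = C\big(\tfrac{\zeta}{1+\zeta^2}\big)^\ell$. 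Then the general transformation law $B_0(\mathrm{diag}(\gamma,\gamma)r(\delta)\,\mathrm{diag}(\zeta,\zeta^{-1})\,r(\theta)) = \gamma^\mu e^{i\ell(\delta+\theta)}f(\zeta)$ propagates this to all of $\GL(2,\R)^+$, and on the other component $B_0 \equiv 0$. Normalizing $C = 1$ (equivalently, fixing the value at $\zeta = 1$, which is $2^{-\ell}$, or whatever normalization is consistent with the later integral computation) yields \eqref{arch-ns-values-eqn}. The $m = -\ell$ case is identical after conjugating by $\mathrm{diag}(-1,1)$, which sends $\zeta \mapsto -\zeta$ in the diagonal slot, explaining the $-\zeta$ in the statement.

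For uniqueness of the Waldspurger model, I would note that the displayed formula shows any weight-$\ell$ vector in such a model is determined up to scalar by $C$, and the space of Waldspurger functionals is thereby at most one-dimensional; combined with the standard fact (Saito--Tunnell, or Waldspurger, cited earlier) that it is nonzero whenever it exists, this gives uniqueness. The only genuine subtlety, and the step I expect to require the most care, is verifying that the function in \eqref{arch-ns-values-eqn} actually extends to a smooth (indeed real-analytic) function on all of $\GL(2,\R)$ that lies in the representation ${\mathcal D}_\mu(\ell)$ — i.e., that annihilation by $L$ together with the weight and the decay at the boundary $\zeta \to \infty$ genuinely pins down the lowest weight vector and not some spurious solution of the ODE (the ODE has a one-parameter family of solutions, but the growth/integrability constraint inherent in membership in the discrete series, together with smoothness across $\zeta = 1$ where the two Cartan cells meet, selects exactly this one). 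This is handled along the lines of \cite{PS2}, and I would invoke that reference for the representation-theoretic bookkeeping rather than redo it.
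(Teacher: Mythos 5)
Your proposal is correct and follows essentially the same route as the paper: the paper's proof simply sets $(L.B_0)\equiv 0$ in \eqref{L-action}, solves the resulting first-order ODE to get \eqref{arch-ns-values-eqn}, notes the $m=-\ell$ case is similar, and derives uniqueness from the fact that ${\mathcal D}_\mu(\ell)$ is generated by its lowest weight vector. You spell out the integration and the propagation via the transformation law more explicitly, and you are right to flag (and defer to \cite{PS2} for) the smoothness/membership-in-the-discrete-series point that the paper leaves implicit.
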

\begin{proof}
Setting $(L.B_0) \equiv 0$ and \eqref{L-action} gives us the formula for $B_0$ above in the case $\ell = m$. The case $\ell = -m$ is very similar. Since ${\mathcal D}_\mu(\ell)$ is generated by the lowest weight vector $B_0$, we get the uniqueness of the Waldspurger model. 
\end{proof}


\subsection{The split case}
Let $S=\mat{-1}{}{}{1}$. Then
$$T(\R) = \{ \mat{x}{y}{y}{x} : x, y \in \R, x^2-y^2 \neq 0\}.$$
We have
$$T(\R) \ni t \mapsto t_0^{-1} \mat{x}{y}{y}{x} t_0 = \mat{x+y}{}{}{x-y} \simeq \R^\times \times \R^\times, \qquad \text{ where } t_0 = \mat{1}{1}{1}{-1}.$$
$$N = \{ \mat{1}{\zeta}{}{1} : \zeta \in \R\} \text{ and } A = \{ \mat{u}{}{}{v} : u, v \in \R^\times\}.$$
Using the Iwasawa decomposition, we get
\begin{equation}\label{GL2-split}
H(\R) = T(\R) t_0 N \SO(2).
\end{equation}
We have the character $\Omega$ of $T(\R)$ given by
\begin{equation}\label{arch-s-omega-defn}
\Omega(\mat{x}{y}{y}{x}) = \Omega(t_0 \mat{x+y}{}{}{x-y} t_0^{-1}) = \sgn(x+y)^{\epsilon_1}|x+y|^{\mu_1} \sgn(x-y)^{\epsilon_2}|x-y|^{\mu_2},
\end{equation}
with $\mu_1, \mu_2 \in \C, \epsilon_1, \epsilon_2 \in \{0,1\}$.

Let us assume that $\pi = {\mathcal D}_\mu(\ell)$ is given by its $(\Omega,S)$-Waldspurger model. Let $B_0 \in \pi$ be weight $\ell$ vector. Hence, we have
$$B_0(t g r(\theta)) = \Omega(t) e^{i \ell \theta} B_0(g).$$
Using the fact that $\mat{-1}{}{}{-1} \in \SO(2) \cap T(\R)$ and the central character of $\pi$, we get the necessary condition that 
\begin{equation}\label{Omega-split-cent-char-cond}
\epsilon_1 + \epsilon_2 \equiv \ell \pmod{2} \text{ and } \mu_1 + \mu_2 = \mu.
\end{equation}
Let us set
$$f(\zeta) := B_0(t_0 \mat{1}{\zeta}{}{1}).$$
For $X \in {\mathfrak g}$, we have
$$t_0 \mat{1}{\zeta}{}{1} \exp(tX) = \mat{x(t)}{y(t)}{y(t)}{x(t)} t_0 \mat{1}{\zeta(t)}{}{1} r(\theta(t)),$$
where $x(t), y(t), \zeta(t)$ and $\theta(t)$ are smooth functions such that $x(0) = 1, y(0) = \theta(0) = 0$ and $\zeta(0) = \zeta$ and $x(t)\pm y(t) >0$. Hence
\begin{align}
(X.B_0)(t_0 \mat{1}{\zeta}{}{1}) &= \frac d{dt} \Big|_{t=0} B_0(t_0 \mat{1}{\zeta}{}{1} \exp(tX)) \nonumber \\
&= \frac d{dt} \Big|_{t=0} B_0(\mat{x(t)}{y(t)}{y(t)}{x(t)} t_0 \mat{1}{\zeta(t)}{}{1} r(\theta(t))) \nonumber \\
&= \Big(\mu_1(x'(0)+y'(0)) + \mu_2(x'(0)-y'(0)) + i \ell \theta'(0)\Big) f(\zeta) + \zeta'(0) f'(\zeta) \label{split-X-action}
\end{align}
\vskip 0.1in
\noindent {\bf $X=D$ case}: Let $X=D$. Then
$$t_0 \mat{1}{\zeta}{}{1} \exp(tD) = t_0 \mat{1}{\zeta}{}{1} \mat{e^t}{}{}{e^{-t}} = t_0 \mat{e^t}{}{}{e^{-t}} t_0^{-1} t_0 \mat{1}{\zeta e^{-2t}}{}{1}.$$
Hence, $x(t)+y(t) = e^t, x(t)-y(t) = e^{-t}, \theta(t)=0$ and $\zeta(t)=\zeta e^{-2t}$. Applying \eqref{split-X-action}, we get
\begin{equation}\label{split-D-action}
(D.B_0)(t_0 \mat{1}{\zeta}{}{1}) = (\mu_1-\mu_2)f(\zeta) - 2\zeta f'(\zeta).
\end{equation}
\vskip 0.1in
\noindent
{\bf $X=E$ case}: Let $X=E$. Then
$$t_0 \mat{1}{\zeta}{}{1} \exp(tE) = t_0 \mat{1}{\zeta}{}{1} \mat{1}{t}{}{1} = t_0 \mat{1}{\zeta+t}{}{1}.$$
Hence, $x(t) \pm y(t) = 1, \theta(t) = 0$ and $\zeta(t) = \zeta + t$. Applying \eqref{split-X-action}, we get
\begin{equation}\label{split-E-action}
(E.B_0)(t_0 \mat{1}{\zeta}{}{1}) = f'(\zeta).
\end{equation}
\vskip 0.1in
\noindent
{\bf $X=F$ case}: Let $X=F$. Then
$$t_0 \mat{1}{\zeta}{}{1} \exp(tF) = t_0 \mat{1}{\zeta}{}{1} \mat{1}{}{t}{1} = t_0 \mat{1+\zeta t}{\zeta}{t}{1}.$$
Let $\mat{1+\zeta t}{\zeta}{t}{1} = \mat{a}{}{}{a^{-1}} \mat{1}{u}{}{1} k$, with $a \in \R_{>0}, u \in \R, k \in \SO(2)$. Applying both sides to $i$ as fractional linear transformation, and using that $\SO(2)$ stabilizes $i$, we get
$$\frac{(1+\zeta t)i+\zeta}{t i + 1} = a^2 i + a^2 u.$$
Hence, we get the system of equations
$$1+\zeta t = a^2 (1+tu), \qquad \zeta = a^2(u-t).$$
This give us
$$a=(1+t^2)^{-1/2}, \qquad u = (1+t^2) \zeta + t.$$
Hence, we have
$$t_0 \mat{1}{\zeta}{}{1} \exp(tF) = = t_0 \mat{1+\zeta t}{\zeta}{t}{1} = t_0 \mat{(1+t^2)^{-1/2}}{}{}{(1+t^2)^{1/2}} \mat{1}{(1+t^2) \zeta + t}{}{1} r(\theta(t)).$$
This gives us $x'(0) \pm y'(0) = 0, \theta'(0) = 0, \zeta'(0) = 1$. Applying \eqref{split-X-action}, we get
\begin{equation}\label{split-F-action}
(F.B_0)(t_0 \mat{1}{\zeta}{}{1}) = f'(\zeta).
\end{equation}
Using the definition \eqref{L-operator-defn} and formulas \eqref{split-D-action}, \eqref{split-E-action} and \eqref{split-F-action}, we get
\begin{equation}\label{split-L-action}
(L.B_0)(t_0 \mat{1}{\zeta}{}{1}) = \frac 12 \Big((\mu_1-\mu_2) f(\zeta) - (2\zeta + 2i) f'(\zeta)\Big).
\end{equation}

\begin{proposition}\label{arch-split-values}
Let $\Omega$ be a character of $T(\R)$  defined in \eqref{arch-s-omega-defn} for $\mu_1, \mu_2 \in \C$ and $\epsilon_1, \epsilon_2 \in \{0,1\}$. For $\ell > 0, \mu \in \C$, let ${\mathcal D}_\mu(\ell)$ be the lowest weight discrete series module of $H(\R)$. A necessary condition for  ${\mathcal D}_\mu(\ell)$ to  have a $(\Omega, S)$-Waldspurger model is $\epsilon_1+\epsilon_2 \equiv \ell \pmod{2}$ and $\mu_1+\mu_2 = \mu$. If a Waldspurger model exists, then it is unique. The lowest weight vector $B_0$ in the $(\Omega, S)$-Waldspurger model of ${\mathcal D}_\mu(\ell)$ is given by
\begin{equation}\label{arch-ns-values-eqn}
B_0(tt_0\mat{1}{\zeta}{}{1}r(\theta)) = \Omega(t) e^{i\ell \theta} (2i+2\zeta)^{\frac{\mu_1-\mu_2}2},
\end{equation}
for all $t \in T(\R), \zeta, \theta \in \R$. 
\end{proposition}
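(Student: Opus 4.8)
The plan is to mirror the proof of Proposition \ref{arch-non-split-values}. Suppose ${\mathcal D}_\mu(\ell)$ has a $(\Omega,S)$-Waldspurger model and let $B_0$ be the (nonzero) lowest weight vector in it, so that $B_0(tgr(\theta))=\Omega(t)e^{i\ell\theta}B_0(g)$. The necessary conditions come from evaluating this identity at $g$ and at $\mat{-1}{}{}{-1}g$: since $\mat{-1}{}{}{-1}$ lies in $\SO(2)\cap T(\R)$ and is central, one obtains $\Omega(\mat{-1}{}{}{-1})=e^{i\ell\pi}=(-1)^\ell$, i.e.\ $\epsilon_1+\epsilon_2\equiv\ell\pmod 2$, while comparing $\Omega|_{\R^\times}$ with the central character $\omega_\pi$ forces $\mu_1+\mu_2=\mu$. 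This is precisely \eqref{Omega-split-cent-char-cond}, so this step is already recorded.

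Next I would determine $B_0$ on the slice $t_0\mat{1}{\zeta}{}{1}$. Being the lowest weight vector of ${\mathcal D}_\mu(\ell)$, $B_0$ is annihilated by the lowering operator $L$ of \eqref{L-operator-defn} (applying $L$ produces a vector of weight $\ell-2$, which is not a $K$-type of ${\mathcal D}_\mu(\ell)$). Substituting $(L.B_0)\equiv 0$ into \eqref{split-L-action} gives the first order linear ODE
$$(2\zeta+2i)f'(\zeta)=(\mu_1-\mu_2)f(\zeta),\qquad f(\zeta):=B_0(t_0\mat{1}{\zeta}{}{1}),$$
whose solution, up to a multiplicative constant, is $f(\zeta)=(2i+2\zeta)^{(\mu_1-\mu_2)/2}$, with the principal branch, which is legitimate since $2i+2\zeta$ has positive imaginary part for every $\zeta\in\R$. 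Normalizing the constant to $1$ and then extending by the transformation law along the decomposition $g=tt_0\mat{1}{\zeta}{}{1}r(\theta)$ from \eqref{GL2-split} produces the asserted formula.

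It remains to check that the formula is independent of the chosen decomposition, and this is where the only real care is needed. Writing $g=t_0\,b\,r(\theta)$ with $b=(t_0^{-1}tt_0)\mat{1}{\zeta}{}{1}\in B(\R)$, the splitting $t_0^{-1}g=b\,r(\theta)$ into a Borel part and an $\SO(2)$ part is unique up to replacing $(b,r(\theta))$ by $(-b,r(\theta+\pi))$, because $B(\R)\cap\SO(2)=\{\pm I\}$ and $-I=r(\pi)$. Hence $\zeta$ is completely determined, while $t$ is pinned down only up to the central factor $\mat{-1}{}{}{-1}$ accompanied by the shift $\theta\mapsto\theta+\pi$; well-definedness of the formula therefore reduces to the single identity $\Omega(\mat{-1}{}{}{-1})e^{i\ell\pi}=1$, which is exactly the parity condition $\epsilon_1+\epsilon_2\equiv\ell\pmod 2$ imposed above. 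One then checks directly that the resulting function is smooth, has $\SO(2)$-weight $\ell$, transforms by $\Omega$ under left translation by $T(\R)$, and is killed by $L$.

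Finally, uniqueness of the model is formal: ${\mathcal D}_\mu(\ell)$ is generated as a $({\mathfrak g},K)$-module by its lowest weight vector, so a $(\Omega,S)$-Waldspurger model is determined by the function attached to that vector, and the ODE shows this function is unique up to scalar. I expect no serious obstacle: the computation of $(L.B_0)$ in \eqref{split-L-action} is the substantive input, and what remains is solving a trivial ODE together with the bookkeeping for well-definedness, where the points requiring attention are the branch of $(2i+2\zeta)^{(\mu_1-\mu_2)/2}$ and the $\pm I$ ambiguity in the decomposition \eqref{GL2-split}.
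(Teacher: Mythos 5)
Your proposal is correct and follows essentially the same route as the paper, which leaves the proof of Proposition \ref{arch-split-values} implicit: the substantive work is the derivation of \eqref{split-L-action}, after which one sets $(L.B_0)\equiv 0$, solves the resulting first-order ODE $(2\zeta+2i)f'(\zeta)=(\mu_1-\mu_2)f(\zeta)$ to get $f(\zeta)=(2i+2\zeta)^{(\mu_1-\mu_2)/2}$, and deduces uniqueness from the fact that ${\mathcal D}_\mu(\ell)$ is generated by its lowest weight vector, exactly as in the proof of Proposition \ref{arch-non-split-values}. Your extra remarks on the branch of the complex power (valid since $2\zeta+2i$ has fixed positive imaginary part) and on well-definedness modulo the $\pm I$ ambiguity in $H(\R)=T(\R)\,t_0\,N\,\SO(2)$ (resolved precisely by the parity condition $\epsilon_1+\epsilon_2\equiv\ell\pmod 2$) are sound details the paper omits.
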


\begin{remark}
One can consider another matrix $S' = \alpha {}^tM S M$, with $\alpha \in \R^\times$ and $M \in \GL(2,\R)$, instead of $S = \mat{-1}{}{}{1}$. The torus $T_{S'} = \{g \in H(\R) : {}^tg S' g = \det(g) S'\}$ is given by $T_{S'} = M^{-1} T_S M$. The character $\Omega'$ of $T_{S'}$ corresponding to the character $\Omega$ of $T_S$ is given by $\Omega'(t') := \Omega(Mt'M^{-1})$. If $B$ is an element of a $(S, \Omega)$-Waldspurger model of $\pi$, then
\begin{equation}\label{change-model-eqn}
B'(g) := B(Mg), g \in H(\R),
\end{equation}
is an element of a $(S', \Omega')$-Waldspurger model of $\pi$. In the section below, we will make some special choices of $S'$ and will use Proposition \ref{arch-split-values} and (\ref{change-model-eqn}) to obtain the explicit formulas of the weight $\ell$ vectors in the $(S', \Omega')$-Waldspurger model of $\pi$.
\end{remark} 

\subsection{The local archimedean integral: the split case}
In this section, we will compute the local archimedean integral. We will use Proposition \ref{arch-split-values}, for values of the weight $\ell$ vector in a Waldspurger model of $\pi$. In Proposition \ref{arch-split-values}, we considered the torus to be the stabilizer of $S=\mat{-1}{}{}{1}$. For our global computation in the next section, we will need to consider a more general choice of $S$, which we will give now. Let $D > 0$ be a fundamental discriminant and set
\begin{equation}\label{S-D-defn}
S(D) := \begin{cases} \mat{\frac{-D}4}{}{}{1} & \text{ if } D \equiv 0 \pmod{4}; \\ \mat{\frac{1-D}4}{\frac 12}{\frac 12}{1} & \text{ if } D \equiv 1 \pmod{4}.\end{cases}
\end{equation}
First, assume that $D \equiv 0 \pmod{4}$. In this case, we have $S(D) = {}^tMSM$, where $M = \mat{\sqrt{D}/2}{}{}{1}$. Let $T_{S(D)}$ be the torus that is defined as the stabilizer of $S(D)$ in $\GL(2)$. Then $T_{S(D)} = M^{-1} T_S M$. Define $\Omega_D : T_{S(D)} \rightarrow \C$ by $\Omega_D(t) = \Omega(MtM^{-1})$. Let $B_D$ be the weight $\ell$ vector in a $(S(D), \Omega_D)$-Waldspurger model for $\pi$. Then we have
\begin{equation}\label{B-D-formula}
B_D(g) = B_0(Mg),
\end{equation}
where the values of $B_0(g)$ are given in Proposition \ref{arch-split-values}.

We want to compute the following integral
$$Z_\infty(s) = \int\limits_{T(\R) \backslash H(\R)} f(\eta h, s) \overline{B_D(h)} dh.$$
The measures are normalized as follows. For a function $\varphi$ on $H(\R)$, we have
$$\int\limits_{\GL(2,\R)^+} \varphi(g) dg = \int\limits_0^\infty \int\limits_\R \int\limits_0^\infty \int\limits_{\SO(2,\R)} \varphi(\mat{u}{}{}{u} \mat{v}{}{}{v^{-1}} \mat{1}{\zeta}{}{1} k)u^{-1} v^{-1} dk\,\,dv\,\,d\zeta\,\,du,$$
where $du, dv, d\zeta$ are the usual Lebesgue measures and $\int_{\SO(2,\R)} dk = 1$. Hence, for a function $\varphi$ which is left invariant under $T_{S(D)}(\R)$, using \eqref{GL2-split}, we have
$$\int\limits_{T(\R) \backslash H(\R)} \varphi(h) dh = \int\limits_\R \int\limits_{\SO(2,\R)} \varphi(M^{-1}t_0 \mat{1}{\zeta}{}{1} k) dk \,\,d\zeta,$$
where $t_0 = \mat{1}{1}{1}{-1}$ and $M = \mat{\sqrt{D}/2}{}{}{1}$.

Let us now make the following assumptions about the relevant representations. Let $\pi = {\mathcal D}(\ell)$, i.e., $\mu = 0$. Let $\ell_1, \ell_2$ be positive integers such that $\ell_1+\ell_2 = \ell$. Set 
\begin{equation}\label{Omega12-defn}
\Omega_1(x,y) = |x|^{\frac{\ell_1-1}2} |y|^{\frac{\ell_2-1}2} = \Omega_2(x,y)^{-1}, \qquad (x, y) \in (\R^\times)^2.
\end{equation}
A simple computation shows that $I(\Omega_1, \Omega_2, 0) = {\mathcal D}(\ell_1) \otimes {\mathcal D}(\ell_2)$. We have,
\begin{equation}\label{Omega-split-formula}
\Omega(x,y) = |x|^{\frac{\ell_1-\ell_2}2}  |y|^{\frac{\ell_2-\ell_1}2}.
\end{equation}
Let us choose a section $f \in I(\Omega_1, \Omega_2, s)$ which corresponds to a vector of weight $(\ell_1, \ell_2)$, i.e., we have 
\begin{equation}\label{f-choice-split}
f((\mat{u_1}{w_1}{}{z_1} r(\theta_1), \mat{u_2}{w_2}{}{z_2} r(\theta_2)),s) = \Omega_1(u_1,u_2) \Omega_2(z_1,z_2) \Big|\frac{u_1u_2}{z_1z_2}\Big|^{\frac 12+s} e^{i(\ell_1\theta_1+\ell_2\theta_2)}.
\end{equation}
The above formula, together with \eqref{B-D-formula}, gives us
\begin{align*}
Z_\infty(s) &= \int\limits_\R f(\eta M^{-1}t_0 \mat{1}{\zeta}{}{1}, s) \overline{B_D(M^{-1}t_0 \mat{1}{\zeta}{}{1})} d\zeta \\
&= \int\limits_\R f(\eta M^{-1}t_0 \mat{1}{\zeta}{}{1}, s) \overline{B_0(t_0 \mat{1}{\zeta}{}{1})} d\zeta
\end{align*}
In this case, $\eta = (\mat{1}{}{\sqrt{D}/2}{1}, \mat{1}{}{-\sqrt{D}/2}{1})$. We need to write the argument of $f$ above according to the Iwasawa decomposition. For this we have
\begin{align*}
\mat{1}{}{\sqrt{D}/2}{1} \mat{2/\sqrt{D}}{}{}{1} t_0 \mat{1}{\zeta}{}{1} &= \mat{2/\sqrt{D}}{}{}{1} \mat{-(1+\zeta^2)^{-\frac 12}}{\ast}{}{2(1+\zeta^2)^{\frac 12}} \mat{\frac \zeta{\sqrt{1+\zeta^2}}}{\frac{-1}{\sqrt{1+\zeta^2}}}{\frac{1}{\sqrt{1+\zeta^2}}}{\frac \zeta{\sqrt{1+\zeta^2}}},\\
\mat{1}{}{-\sqrt{D}/2}{1} \mat{2/\sqrt{D}}{}{}{1} t_0 \mat{1}{\zeta}{}{1} &= \mat{2/\sqrt{D}}{}{}{1}\mat{1}{\zeta+1}{}{-2}.
\end{align*}
Hence,
$$f(\eta M^{-1}  t_0 \mat{1}{\zeta}{}{1},s) =D^{-\frac{(\ell_1+\ell_2)}4-s} \frac{(\zeta + i)^{\ell_1}}{(1+\zeta^2)^{\ell_1+s}}.$$
We also have $\mu_1-\mu_2=\ell_1-\ell_2$. Hence, by Proposition \ref{arch-split-values}, we have
$$\overline{B(t_0 \mat{1}{\zeta}{}{1})} = 2^{\frac{\ell_1-\ell_2}2} (-i+\zeta)^{\frac{\ell_1-\ell_2}2} = 2^{\frac{\ell_1-\ell_2}2} \Big(\frac{1+\zeta^2}{i+\zeta}\Big)^{\frac{\ell_1-\ell_2}2}.$$
Hence, we get
\begin{equation}\label{Z-infty-eqn}
Z_\infty(s) = 2^{\frac{\ell_1-\ell_2}2} D^{-\frac{\ell}4-s} \int\limits_\R \frac{(i+\zeta)^{\frac{\ell}2}}{(1+\zeta^2)^{\frac{\ell}2+s}} d\zeta.
\end{equation} 
\begin{proposition}\label{arch-int-k1k2}
For positive integer $k$ and complex number $s$, set 
$$I(k, s) := \int\limits_{-\infty}^\infty \frac{(i+x)^{k}}{(1+x^2)^{k+s}} dx,$$
whenever the integral converges. Then, we have
\begin{equation}\label{arch-int-k1k2-eqn}
I(k, s) = \begin{cases} i \pi & \text{ if } k = 1, s = 0;\\
i^k2^{2-2s-k} \pi \frac{\Gamma(2s+k-1)}{\Gamma(s) \Gamma(k+s)} & \text{ if } {\rm Re}(2s+k) > 1. 
\end{cases}
\end{equation}
\end{proposition}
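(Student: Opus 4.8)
The plan is to reduce $I(k,s)$ to Cauchy's beta integral and then specialize the parameters. Since $k$ is a positive integer one can factor cleanly: $i+x=i(1-ix)$ and $1+x^2=(1-ix)(1+ix)$, where $1\mp ix$ has positive real part so all non-integral powers below are taken in the principal branch. For $\mathrm{Re}(2s+k)>1$ — which is precisely the range in which the integrand has absolute value $O\big((1+x^2)^{-k/2-\mathrm{Re}(s)}\big)=O(|x|^{-k-2\mathrm{Re}(s)})$ and is therefore integrable at $\pm\infty$ — this gives
\begin{equation*}
I(k,s)=\int_{-\infty}^{\infty}\frac{i^k(1-ix)^k}{(1-ix)^{k+s}(1+ix)^{k+s}}\,dx=i^k\int_{-\infty}^{\infty}\frac{dx}{(1-ix)^s(1+ix)^{k+s}}.
\end{equation*}

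Next I would invoke the classical identity
\begin{equation*}
\int_{-\infty}^{\infty}\frac{dx}{(1-ix)^a(1+ix)^b}=\frac{\pi\,2^{2-a-b}\,\Gamma(a+b-1)}{\Gamma(a)\,\Gamma(b)},\qquad \mathrm{Re}(a+b)>1,
\end{equation*}
which one may cite or derive on the spot: for $\mathrm{Re}(a),\mathrm{Re}(b)>0$ substitute $(1\mp ix)^{-\alpha}=\Gamma(\alpha)^{-1}\int_0^{\infty}t^{\alpha-1}e^{-t}e^{\pm itx}\,dt$, move the $x$-integral inside (legitimate, say for $\mathrm{Re}(a),\mathrm{Re}(b)$ large, by Fubini/Plancherel), use $\int_{-\infty}^{\infty}e^{i(t-u)x}\,dx=2\pi\delta(t-u)$, and evaluate the surviving integral $\tfrac{2\pi}{\Gamma(a)\Gamma(b)}\int_0^{\infty}t^{a+b-2}e^{-2t}\,dt=\tfrac{2\pi\,\Gamma(a+b-1)}{2^{a+b-1}\Gamma(a)\Gamma(b)}$; since both sides are holomorphic on the connected domain $\mathrm{Re}(a+b)>1$, the identity then holds throughout by analytic continuation in $a$ and $b$. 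Setting $a=s$ and $b=k+s$, so $a+b-1=2s+k-1$ and $2-a-b=2-2s-k$, produces
\begin{equation*}
I(k,s)=i^k\,\frac{\pi\,2^{2-2s-k}\,\Gamma(2s+k-1)}{\Gamma(s)\,\Gamma(k+s)},\qquad \mathrm{Re}(2s+k)>1,
\end{equation*}
which is the second case of the Proposition.

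Finally, the case $k=1$, $s=0$ sits on the boundary $\mathrm{Re}(2s+k)=1$, is not covered above (there the right-hand side is a formal $\Gamma(0)/\Gamma(0)$), and must be computed directly: reading the integral as the symmetric principal value, $I(1,0)=\int_{-\infty}^{\infty}\tfrac{x}{1+x^2}\,dx+i\int_{-\infty}^{\infty}\tfrac{dx}{1+x^2}$, the first term vanishing by oddness and the second equalling $\pi$, so $I(1,0)=i\pi$. The only genuinely delicate points are the branch bookkeeping in the factorization and the conditional (rather than absolute) convergence exactly when $k+2\mathrm{Re}(s)=1$; everything else is routine arithmetic. If one prefers to avoid Cauchy's integral, the substitution $x=\tan\theta$ works equally well: using $\sin\theta+i\cos\theta=ie^{-i\theta}$ it turns $I(k,s)$ into $i^k\int_{-\pi/2}^{\pi/2}e^{-ik\theta}\cos^{k+2s-2}\theta\,d\theta$, a standard trigonometric beta integral evaluating to the same thing.
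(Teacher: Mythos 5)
Your proof is correct, and it is both more complete and cleaner than what the paper actually supplies. The paper's own proof only evaluates $I(1,0)$ directly (via $\arctan$) and then says the general case follows from ``a suitable change of variable, a fairly complicated contour integral argument reducing the integral to the reciprocal of the beta function,'' with no details given. You arrive at the same endpoint—Cauchy's beta integral—but by a much slicker route: the elementary factorization $i+x = i(1-ix)$, $1+x^2 = (1-ix)(1+ix)$ immediately rewrites the integrand as $i^k(1-ix)^{-s}(1+ix)^{-(k+s)}$, so the integral is literally already in Cauchy's form $\int_{\R}(1-ix)^{-a}(1+ix)^{-b}\,dx = \pi\,2^{2-a-b}\Gamma(a+b-1)/(\Gamma(a)\Gamma(b))$ with $a=s$, $b=k+s$, no contour gymnastics required. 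Your bookkeeping is sound: the branches are principal since $\mathrm{Re}(1\mp ix)=1>0$, the convergence threshold $\mathrm{Re}(2s+k)>1$ matches the decay $(1+x^2)^{-k/2-\mathrm{Re}(s)}$, and you correctly flag that $(k,s)=(1,0)$ sits on the boundary and must be treated as a (symmetric) principal value, exactly as the paper does via the odd/even split. Your derivation of Cauchy's formula via the Gamma-integral/Fourier argument and your alternative $x=\tan\theta$ reduction are both standard and valid. In short: same destination as the paper, a tidier path, and the details that the paper leaves implicit are filled in correctly.
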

\begin{proof}
We have
$$I(1,0) = \int\limits_{-\infty}^\infty \frac{i+x}{1+x^2} dx = i \int\limits_{-\infty}^\infty \frac{1}{1+x^2} dx = i \arctan(x) |_{-\infty}^\infty = i \pi.$$
The general case is obtained by a suitable change of variable, a fairly complicated contour integral argument reducing the integral to the reciprocal of the beta function. 
\end{proof}

%
%

Let us remark that the special case of $k=1, s=0$ can also be obtained from the general formula above by taking the limit as $s$ approaches zero and the doubling formula for the gamma function. Substituting \eqref{arch-int-k1k2-eqn} into \eqref{Z-infty-eqn}, we get the following theorem.
\begin{theorem}\label{arch-split-thm}
Let $\pi = \mathcal{D}(\ell)$, where $\ell$ is a positive even integer. Let $\ell_1, \ell_2$ be two positive integers such that $\ell_1 + \ell_2 = \ell$. 
Let $\Omega_1, \Omega_2$ be characters of $\R^\times \times \R^\times$ given by \eqref{Omega12-defn}.  Let $\Omega$ be given by \eqref{Omega-split-formula}. For $D > 0$, a fundamental discriminant, let $S(D)$ be defined by \eqref{S-D-defn}. Let $\pi$ be given by its $(S(D), \Omega_{S(D)})$-Waldspurger model and let $B_D$ be a weight $\ell$ vector in $\pi$. Let $f \in I(\Omega_1, \Omega_2, s)$ be as defined in \eqref{f-choice-split}. Then,  we have
\begin{equation}\label{arch-split-formula}
Z_\infty(s) = \begin{cases} iD^{-1/2} \pi & \text{ if } \ell = 2, s= 0; \\ 
2^{2-2s-\ell_2} D^{-\frac{\ell}4-s} i^{\frac{\ell}2} \pi \frac{\Gamma(2s+\frac{\ell}2-1)}{\Gamma(s) \Gamma(\frac{\ell}2+s)} & \text{ if } {\rm Re}(2s+\frac{\ell}2) > 1.\end{cases}
 \end{equation}
 In particular, 
 $$Z_\infty(0) = 0 \text{ if } \ell > 2.$$
\end{theorem}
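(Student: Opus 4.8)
The plan is to combine the Iwasawa decomposition computation of $f(\eta M^{-1}t_0\mat{1}{\zeta}{}{1},s)$ with the explicit formula for the weight $\ell$ vector $B_0$ from Proposition~\ref{arch-split-values}, reducing $Z_\infty(s)$ to a single one-variable integral, and then to invoke Proposition~\ref{arch-int-k1k2} to evaluate that integral. First I would record the two Iwasawa decompositions displayed just before the statement: applying $\mat{1}{}{\sqrt D/2}{1}$ and $\mat{1}{}{-\sqrt D/2}{1}$ (the two components of $\eta$) to $M^{-1}t_0\mat{1}{\zeta}{}{1}$, one writes each as (lower-triangular scalar)$\cdot$(upper-triangular)$\cdot$(rotation). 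The first component produces a diagonal part with entries $-(1+\zeta^2)^{-1/2}$ and $2(1+\zeta^2)^{1/2}$ together with a rotation $r(\theta)$ with $e^{i\theta}$ built from $(\zeta-i)/\sqrt{1+\zeta^2}$; the second component is already essentially upper-triangular. Feeding these into the defining formula \eqref{f-choice-split} for $f$ — using $\Omega_1,\Omega_2$ from \eqref{Omega12-defn}, so that the weights of the two $\GL_2(\R)$ factors are $\ell_1$ and $\ell_2$ — collapses everything, after simplification, to
$$f(\eta M^{-1}t_0\mat{1}{\zeta}{}{1},s)=D^{-\frac{\ell_1+\ell_2}4-s}\frac{(\zeta+i)^{\ell_1}}{(1+\zeta^2)^{\ell_1+s}}.$$

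Next I would substitute the value of $\overline{B_0(t_0\mat{1}{\zeta}{}{1})}$. By Proposition~\ref{arch-split-values} with $\mu_1-\mu_2=\ell_1-\ell_2$ (which follows from \eqref{Omega12-defn}) and $\theta=0$, we have $B_0(t_0\mat{1}{\zeta}{}{1})=(2i+2\zeta)^{(\ell_1-\ell_2)/2}$, so its complex conjugate is $2^{(\ell_1-\ell_2)/2}(\zeta-i)^{(\ell_1-\ell_2)/2}=2^{(\ell_1-\ell_2)/2}\big((1+\zeta^2)/(i+\zeta)\big)^{(\ell_1-\ell_2)/2}$. Multiplying and using $\ell_1+\ell_2=\ell$, the $(i+\zeta)$ powers combine to exponent $\ell_1-\tfrac{\ell_1-\ell_2}2=\tfrac{\ell}2$ and the $(1+\zeta^2)$ powers to exponent $\ell_1+s-\tfrac{\ell_1-\ell_2}2=\tfrac{\ell}2+s$, giving exactly \eqref{Z-infty-eqn}:
$$Z_\infty(s)=2^{\frac{\ell_1-\ell_2}2}D^{-\frac{\ell}4-s}\int_{\R}\frac{(i+\zeta)^{\ell/2}}{(1+\zeta^2)^{\ell/2+s}}\,d\zeta=2^{\frac{\ell_1-\ell_2}2}D^{-\frac{\ell}4-s}\,I\!\left(\tfrac{\ell}2,s\right).$$
Then I would apply Proposition~\ref{arch-int-k1k2} with $k=\ell/2$. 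For $\ell=2$, $s=0$ we are in the $k=1$, $s=0$ case, giving $I(1,0)=i\pi$ and hence $Z_\infty(0)=2^{(\ell_1-\ell_2)/2}D^{-1/2}i\pi=iD^{-1/2}\pi$ once one notes $\ell_1=\ell_2=1$ forces $2^{(\ell_1-\ell_2)/2}=1$. For $\mathrm{Re}(2s+\ell/2)>1$ the second case of \eqref{arch-int-k1k2-eqn} gives $I(\ell/2,s)=i^{\ell/2}2^{2-2s-\ell/2}\pi\,\Gamma(2s+\ell/2-1)/(\Gamma(s)\Gamma(\ell/2+s))$, and since $2^{(\ell_1-\ell_2)/2}2^{-\ell/2}=2^{-\ell_2}$, we obtain the stated formula \eqref{arch-split-formula}. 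Finally, for $\ell>2$ and $s=0$ the relevant case is $\mathrm{Re}(\ell/2)>1$, so the second formula applies with $s=0$; the factor $1/\Gamma(s)$ vanishes at $s=0$ (a pole of $\Gamma$), while $\Gamma(\ell/2-1)$ and $\Gamma(\ell/2)$ are finite and nonzero, so $Z_\infty(0)=0$.

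The only genuinely nontrivial input is Proposition~\ref{arch-int-k1k2}, whose proof is deferred to ``a suitable change of variable, a fairly complicated contour integral argument reducing the integral to the reciprocal of the beta function''; accepting that, the present statement is essentially bookkeeping. The main obstacle in carrying out the plan cleanly is keeping the Iwasawa-decomposition algebra straight — in particular tracking the signs and the $D$-power $D^{-(\ell_1+\ell_2)/4-s}$ through the scalar factor $M^{-1}=\mat{2/\sqrt D}{}{}{1}$, and correctly reading off the $e^{i(\ell_1\theta_1+\ell_2\theta_2)}$ contribution from the rotation parts (noting the second component contributes no rotation). Everything else is a direct substitution, and the vanishing assertion for $\ell>2$ is then immediate from the pole of $\Gamma$ at $0$.
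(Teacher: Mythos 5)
Your proposal is correct and follows the paper's proof essentially verbatim: the same Iwasawa decompositions of the two components of $\eta M^{-1}t_0\mat{1}{\zeta}{}{1}$, the same substitution of $\overline{B_0}$ from Proposition~\ref{arch-split-values}, the same reduction to $2^{(\ell_1-\ell_2)/2}D^{-\ell/4-s}I(\ell/2,s)$, and the same appeal to Proposition~\ref{arch-int-k1k2}. The only thing you omit, which the paper's one-line proof explicitly records, is the $D\equiv 1\pmod 4$ case, handled there by the conjugation $\mat{\frac{1-D}4}{\frac 12}{\frac 12}{1}=\mat{1}{\frac 12}{}{1}\mat{\frac{-D}4}{}{}{1}\mat{1}{}{\frac 12}{1}$.
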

\begin{proof} The case $D \equiv 0 \pmod{4}$ follows from the computations above the statement of the theorem. The $D \equiv 1 \pmod{4}$ follows exactly as above noting that $\mat{\frac{1-D}4}{\frac 12}{\frac 12}{1} = \mat{1}{\frac 12}{}{1} \mat{\frac{-D}4}{}{}{1} \mat{1}{}{\frac 12}{1}$.
\end{proof}

\section{The global integral}\label{GlobalIntegral}
In this section, we will prove the main global theorem of the paper. We will specify the choices precisely and put together the local results from previous sections to obtain a formula for the global integral. We will also obtain a classical version of the integral formula rewriting the integral as the Petersson inner product of classical holomorphic modular forms.
\subsection{The main global theorem}\label{global-section}
Let us make the following assumptions. Let $F = \Q, L = \Q(\sqrt{D}),$ with $D > 0$ a fundamental discriminant. Let us set 
$$S(D) = \begin{cases} \mat{\frac{-D}4}{}{}{1} & \text{ if } D \equiv 0 \pmod{4};\\ & \\
\mat{\frac{1-D}4}{\frac 12}{\frac 12}{1} & \text{ if } D \equiv 1 \pmod{4}.\end{cases}$$
Let $\pi = \otimes' \pi_p$ be an irreducible cuspidal representation of $H(\A)$ and $N$ a square-free positive integer. 
\begin{itemize}
\item For $p \nmid N$, let $\pi_p$ be an unramified representation.

\item  For $p | N$, let $\pi_p$ be a twist of the Steinberg representation by an unramified character $\chi_p$. 

\item Let $\pi_\infty$ be the holomorphic discrete series representation ${\mathcal D}(\ell)$, with lowest weight $\ell$, a positive even integer. 
\end{itemize}
Let $\Omega_1, \Omega_2 : \A_L^\times \rightarrow \C^\times$ be two characters satisfying the following properties.
\begin{itemize}
\item Let $\Omega_1 \Omega_2 |_{\A^\times} = \omega_\pi$, where $\omega_\pi$ is the central character of $\pi$.

\item Let $N' | N$ be a positive integer. If $v \nmid N'$, then both $\Omega_{1,v}$ and $\Omega_{2,v}$ are unramified. If $v | N'$ then  assume that $c(\Omega_{1,v}) = 1, c(\Omega_{2,v}) = 0$. 

\item For $x, y \in \R^\times$, let $\Omega_{1,\infty}(x,y) = |x|^{\ell/2-1} |y|^{\ell/2-1} = \Omega_{2,\infty}^{-1}(x,y)$.

\end{itemize}
Note that, one can show that characters satisfying the above conditions do exist. Let $\Omega(z) = \Omega_1^{-1}(\bar{z}) \Omega_2^{-1}(z)$ for $z \in \A_L^\times$. Let us make the following assumptions.
\begin{itemize}
\item For every $p \leq \infty$, the local representation $\pi_p$ has a $(S(D), \bar\Omega_p)$-Waldspurger model. Note that the choices above imply that  this condition reduces to the following. If $p | (N/N')$, then either $p$ is split in $L$, or $p$ is ramified in $L$ and $\bar\Omega_p = \chi_p' \chi_p \circ N_{L_p/\Q_p}$, where $\chi_p'$ is the unique quadratic unramified character of $\Q_p^\times$.

\item Assume that $L(\frac 12, {\rm BC}(\pi) \times \bar\Omega) \neq 0$, where ${\rm BC}(\pi)$ is the base change of $\pi$ to $H(\A_L)$. 
\end{itemize} 
These two assumptions together imply that $\pi$ has a  non-zero global $(S(D), \bar\Omega)$-Waldspurger model. Let $\varphi  \in \pi$ and $B_\varphi = \otimes B_p$ be such that $B_p$ is in the $(S(D), \bar\Omega_p)$-Waldspurger model of $\pi_p$. Alternatively, $\bar{B_p}$ is in the $(S(D), \Omega_p)$-Waldspurger model of $\tilde\pi_p$, the contragredient representation of $\pi_p$. Choose $\varphi$  such that, for any $p < \infty$, we have $B_p$ is the newform in $\pi_p$, and $\varphi_\infty$ is the weight $\ell$ vector in $\pi_\infty$. These local functions are normalized as follows: 
\begin{itemize}
\item If $p \nmid N$ then $B_p(1) = 1$.

\item If $p | (N/N'), L_p = \Q_p \oplus \Q_p, \Omega_p(1,\varpi_p) = \bar\chi_p(\varpi_p)$, then $B_p(\mat{1}{}{u_1}{1}) = 1$. Here,
$u_1 = \sqrt{D}/2$ if $D \equiv 0 \pmod{4}$ and $u_1 = (1+\sqrt{D})/2$ if $D \equiv 1 \pmod{4}$.

\item If $p < \infty$ and does not satisfy any of the conditions above, then $B_p(\mat{}{1}{-1}{}) = 1$.

\item For $p = \infty$, we have $B_\infty(M_D^{-1}t_0) = 1$, where $t_0 = \mat{1}{1}{1}{-1}$ and $M_D = \mat{\sqrt{D}/2}{}{}{1}$ if $D \equiv 0 \pmod{4}$ and $M_D = \mat{\sqrt{D}/2}{}{1/2}{1}$ if $D \equiv 1 \pmod{4}$.
\end{itemize}

 Let us choose the section $f( \cdot, s) = \otimes f_v( \cdot, s) \in I(\Omega_1, \Omega_2, s)$ as follows. We will write $f_p$ for $\otimes_{v|p} f_v$. If $p \nmid N$, then $f_p$ is the spherical vector in the local representation normalized by $f_p(1) = 1$. If $p | N'$ then $f_p$ is the newform in the local representation normalized so that $f_p(\mat{1}{}{1}{1}) = 1$. If $p | (N/N')$, then we choose $f_p$ to be the translate of the spherical vector, normalized to be $1$ at the identity, by $\mat{\varpi_{L_v}^{-1}}{}{}{1}$. For $p = \infty$, we choose $f_\infty$ to be the vector of weight $(\ell/2, \ell/2)$ given by (\ref{f-choice-split}) with $\ell_1 = \ell_2 = \ell/2$. The next theorem computes the following global integral
$$Z(s, f, \bar\varphi) = \int\limits_{Z_H(\A) H(\Q) \backslash H(\A)} E(h, s, f) \bar\varphi(h) dh.$$

\begin{theorem}\label{global-int-thm}
Let the notations and choices of local vectors be as above. Then, we have
$$Z(s, f, \bar\varphi) = \frac{L(2s+\frac 12, \tilde\pi \times \Omega_1|_{\A^\times})}{L(2s+1, \Omega_1 \Omega_2^{-1})} \prod\limits_{p \leq \infty} Y_p(s),$$
where, for $p < \infty$, we have
$$Y_p(s) = \begin{cases} 1 & \text{ if } p \nmid N; \\ 
\frac{p-\Big(\frac L{\p}\Big)}{p+1} L_p(2s+1, \Omega_1 \Omega_2^{-1}) & \text{ if } p | N'; \\
\frac{-\Omega_1(\varpi_{L_p})^{-1} p^{s+\frac 32}}{p+1} & \text{ if } p | \frac N{N'}, \Big(\frac L{\p}\Big) = 0;\\
\frac{-\Omega_1(\varpi_p,1)^{-1} p^{s+\frac 12}}{p+1} & \text{ if } p | \frac N{N'}, \Big(\frac L{\p}\Big) = 1, \Omega(1,\varpi_p) = \bar\chi_p(\varpi_p);\\
\frac{-(p-1)\Omega_1(\varpi_p,1)^{-1} p^{s+\frac 12}}{(p+1)(1-\bar\chi_p(\varpi_p)^{-1}\Omega(1,\varpi_p))} & \text{ if } p | \frac N{N'}, \Big(\frac L{\p}\Big) = 1, \Omega(1,\varpi_p) \neq \bar\chi_p(\varpi_p), \end{cases}
$$
and
$$Y_\infty(s) = \begin{cases} iD^{-1/2} \pi & \text{ if } \ell = 2, s= 0; \\ 
2^{2-2s-\ell_2} D^{-\frac{\ell}4-s} i^{\frac{\ell}2} \pi \frac{\Gamma(2s+\frac{\ell}2-1)}{\Gamma(s) \Gamma(\frac{\ell}2+s)} & \text{ if } {\rm Re}(2s+\frac{\ell}2) > 1.\end{cases}
$$
Here, $\tilde\pi$ is the contragredient representation of $\pi$.
\end{theorem}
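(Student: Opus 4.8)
The plan is to derive the formula by combining the local integral calculations of Sections~\ref{ExplicitValues}, \ref{IntegralCalculation} and \ref{ExplicitValuesArch} with the factorization of the global integral. By Proposition~\ref{Basic-identity-prop} we have $Z(s,f,\bar\varphi)=\int_{T(\A)\backslash H(\A)}f(\eta h,s)\,B_{\bar\varphi}(h)\,dh$. The two standing hypotheses of Section~\ref{global-section} --- that each local $(S(D),\bar\Omega_p)$-Waldspurger model exists and that $L(\tfrac12,{\rm BC}(\pi)\times\bar\Omega)\neq0$ --- ensure, via Waldspurger's criterion \cite{Wald85}, that the global $(S(D),\bar\Omega)$-Waldspurger model of $\pi$ is non-zero; by its uniqueness the period attached to the chosen factorizable $\varphi$ factors, so that, exactly as in the discussion preceding (\ref{local-integral}), $Z(s,f,\bar\varphi)=\prod_{v}Z_v(s)$, where $Z_v$ is the local integral (\ref{local-integral}) built from the prescribed local vectors and $\overline{B_v}$ lies in the $(S(D),\Omega_v)$-Waldspurger model of $\tilde\pi_v$. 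Here the global non-vanishing hypothesis is genuinely needed: without it $B_{\bar\varphi}\equiv0$ and the asserted identity would be false.

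Next I would evaluate each $Z_v(s)$ by invoking the appropriate local result, after checking that its hypotheses hold for the data fixed in Section~\ref{global-section}; note that $\tilde\pi_v$ is again unramified, an unramified twist of Steinberg, or the holomorphic discrete series $\mathcal D(\ell)$, so the local theorems apply verbatim with $\pi$ replaced by $\tilde\pi$. Concretely: for $p\nmid N$ use Theorem~\ref{local-unram-int-thm} (legitimate since then $\Omega_{1,p},\Omega_{2,p}$ are unramified and $c(\Omega_p)=0$); for $p\mid N'$ use Theorem~\ref{stein-ram-int-thm} ($\pi_p=\chi_p\St$, $c(\Omega_{1,p})=1$, $\Omega_{2,p}$ unramified); for $p\mid N/N'$ use Theorem~\ref{stein-unramif-int-thm} --- here $f_p$ must be taken to be the translated spherical section, since the spherical section forces $Z_p=0$, and one matches the three sub-cases ($\Big(\frac L{\p}\Big)=0$; $\Big(\frac L{\p}\Big)=1$ with $\Omega_p(1,\varpi_p)=\bar\chi_p(\varpi_p)$; $\Big(\frac L{\p}\Big)=1$ otherwise) with the normalizations of $B_p$ prescribed in Section~\ref{global-section}; and for $p=\infty$ use Theorem~\ref{arch-split-thm} with $\ell_1=\ell_2=\ell/2$, which gives $Z_\infty(s)=Y_\infty(s)$.

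Finally I would reorganize the product. In each of the local formulas $Z_p(s)$ is exhibited as a ratio of the standard local Euler factors of $\tilde\pi_p\times\Omega_{1,p}|_{\Q_p^\times}$ and of $\Omega_{1,p}\Omega_{2,p}^{-1}$ times an explicit finite constant; multiplying over all places, these local factors assemble in the usual way --- with split, inert and ramified primes of $L$ combining as recorded in Theorem~\ref{local-unram-int-thm} --- into the two global $L$-functions $L(2s+\tfrac12,\tilde\pi\times\Omega_1|_{\A^\times})$ and $L(2s+1,\Omega_1\Omega_2^{-1})$ of the statement, while the leftover constants together with $Z_\infty(s)$ are precisely the functions $Y_p(s)$, which equal $1$ for $p\nmid N$ by Theorem~\ref{local-unram-int-thm}. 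The Euler product is a finite product, and the $L$-functions are understood through their standard meromorphic continuation, so no convergence question arises.

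I expect this to be bookkeeping rather than a conceptual obstacle: the real care goes into tracking normalizations and signs uniformly across all the cases --- in particular verifying that the prescribed $B_p$ really are the components of a single global vector in the Waldspurger model and that the $f_p$ glue to a global section of $I(\Omega_1,\Omega_2,s)$ --- and into confirming at each place that one lies in the range of validity of the relevant local theorem (irreducibility of the local induced representation where it is needed, the conductor conditions on $\Omega_{1,p},\Omega_{2,p}$, and existence of the local Waldspurger model at the primes dividing $N/N'$). The one genuinely delicate input, already isolated in Section~\ref{IntegralCalculation}, is the use of the old-vector section at the primes dividing $N/N'$, since there the naive spherical choice produces a vanishing local integral.
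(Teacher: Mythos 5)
Your proposal is correct and follows the same route as the paper: the proof there is simply the assertion that the theorem follows by multiplying the four local computations (Theorems \ref{local-unram-int-thm}, \ref{stein-ram-int-thm}, \ref{stein-unramif-int-thm}, \ref{arch-split-thm}), after the Eulerian factorization $Z(s)=\prod_v Z_v(s)$ coming from Proposition \ref{Basic-identity-prop} together with uniqueness of the Waldspurger model. You fill in the verifications the paper leaves implicit --- that the hypotheses of each local theorem are met by the data of Section \ref{global-section}, that one is really computing with $\tilde\pi_v$, and that the translated spherical section is forced at primes dividing $N/N'$ --- but the argument itself is identical.
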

\begin{proof}
The theorem follows from Theorems \ref{local-unram-int-thm}, \ref{stein-ram-int-thm}, \ref{stein-unramif-int-thm} and \ref{arch-split-thm}. 
\end{proof}

\subsection{Petersson norm of classical modular forms}
In this section, we will realize the global integral $Z(s, f, \bar{\varphi})$ as the Petersson inner product of classical modular forms on the complex upper half plane ${\mathcal H} := \{ x+iy \in \C : y > 0\}$. Let $(\tau_1, \tau_2) \in {\mathcal H}^2$ and let $g_1, g_2 \in \SL(2,\R)$ such that $g_j \langle i \rangle = \tau_j$. Here, we have $g\langle \tau \rangle = (a \tau + b)/(c \tau + d)$ for $\tau \in {\mathcal H}$ and $g = \mat{a}{b}{c}{d} \in H(\R)$. Set $g = \otimes_v g_v \in H(\A_L)$ by $g_v = 1$ for $v \nmid \infty$ and $g_\infty = (g_1, g_2)$. Define the Eisenstein series ${\mathcal E}((\tau_1, \tau_2), s, f) : {\mathcal H}^2 \rightarrow \C$ by
\begin{equation}\label{classical-Hilb-Eis-ser-defn}
{\mathcal E}((\tau_1, \tau_2), s, f) := J(g_1, i)^{\ell/2} J(g_2, i)^{\ell/2} E(g, s, f),
\end{equation}
where $J(\mat{a}{b}{c}{d}, \tau) := c \tau + d$. Note that the right hand side above is well-defined by the choice of the section $f$. In fact, if $\tau_j = x_j + i y_j$, then we can choose $g_j = \mat{1}{x_j}{}{1} \mat{\sqrt{y_j}}{}{}{1/\sqrt{y_j}}$. In this case, $J(g_j, i) = y_j^{-1/2}$. Let $\Phi$ be the cusp form on $\mathcal H$ corresponding to $\varphi$ from the previous section. 

For two smooth functions $f_1, f_2$ on $\mathcal H$ of weight $\ell$ with respect to $\Gamma_0(N)$, at least one of which is rapidly decreasing at $\infty$, we define the Petersson inner product by
\begin{equation}\label{Petersson-inner-product}
\langle f_1, f_2 \rangle := \frac 1{{\rm vol}(\Gamma_0(N) \backslash {\mathcal H})} \int\limits_{\Gamma_0(N) \backslash {\mathcal H}} f_1(\tau) \overline{f_2(\tau)}y^\ell \frac{dx dy}{y^2}.
\end{equation}

\begin{proposition}\label{Petersson-norm-prop}
With notations as in \ref{global-int-thm}, we have
\begin{equation}
Z(s, f, \bar\varphi) ={\rm vol}(\Gamma_0(N) \backslash {\mathcal H})  \langle {\mathcal E}|_{\Delta \mathcal H},  \Phi \rangle.
\end{equation}
\end{proposition}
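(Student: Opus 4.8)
The plan is to unwind the adelic integral $Z(s,f,\bar\varphi)$ into the classical Petersson integral by using strong approximation for $\GL_2$ together with the fact that the chosen local vectors $B_p$ (for $p<\infty$) and the chosen section $f_p$ are fixed by the relevant open compact subgroups. First I would recall that $\varphi$ is the adelization of the classical cusp form $\Phi\in S_\ell(N,\psi)$, so that $\varphi(g_\infty k_0) = \overline{J(g_\infty,i)}^{-\ell}\,\Phi(g_\infty\langle i\rangle)$ for $g_\infty\in\GL(2,\R)^+$ and $k_0$ in the open compact subgroup of $H(\A_f)$ fixing $\varphi$ (which, by the choices in Section \ref{global-section}, is $\prod_{p\nmid N} H(\Z_p)$ times the appropriate Iwahori-type subgroups at $p\mid N$). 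Likewise the Eisenstein series $E(g,s,f)$, when restricted via \eqref{classical-Hilb-Eis-ser-defn}, becomes the classical Hilbert Eisenstein series $\mathcal E$, and the diagonal restriction $g_\infty=(g_1,g_2)$ with $g_1=g_2$ corresponds exactly to $\mathcal E|_{\Delta\mathcal H}$. Strong approximation gives $H(\A) = H(\Q)\,(\GL(2,\R)^+\times K_f)$ for a suitable open compact $K_f$ containing both stabilizers, and one passes from the integral over $Z_H(\A)H(\Q)\backslash H(\A)$ to an integral over $\Gamma_0(N)\backslash\GL(2,\R)^+/\{\pm 1\}\SO(2)\cong\Gamma_0(N)\backslash\mathcal H$.

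The key steps, in order, are: (i) use strong approximation and the $K_f$-invariance of both $f$ and $\varphi$ to replace the adelic quotient integral by an integral over $\Gamma_0(N)\backslash\GL(2,\R)^+$, tracking the normalization constant $\mathrm{vol}(\Gamma_0(N)\backslash\mathcal H)$ that appears when matching the Tamagawa-type measure on $H(\A)$ with the hyperbolic measure $y^{-2}dx\,dy$; (ii) insert the weight-$\ell$ archimedean transformation laws for $E(g,s,f)$ and $\varphi(g)$, i.e.\ pull out the automorphy factors $J(g_\infty,i)^{\ell/2}J(g_\infty,i)^{\ell/2}$ from $E$ and $\overline{J(g_\infty,i)}^{-\ell}$ from $\varphi$, so that the product becomes $\mathcal E((\tau,\tau),s,f)\,\overline{\Phi(\tau)}\,y^{\ell}$ after accounting for $|J(g_\infty,i)|^{2}=y^{-1}$; (iii) recognize the resulting integral over $\Gamma_0(N)\backslash\mathcal H$ of $\mathcal E|_{\Delta\mathcal H}\cdot\overline\Phi\cdot y^{\ell-2}\,dx\,dy$ as $\mathrm{vol}(\Gamma_0(N)\backslash\mathcal H)\,\langle\mathcal E|_{\Delta\mathcal H},\Phi\rangle$ by the definition \eqref{Petersson-inner-product}; and (iv) check convergence, using cuspidality of $\Phi$ (rapid decay) to justify the interchange of the Eisenstein sum with the integral in the region $\mathrm{Re}(s)>1/2$, with meromorphic continuation handling the rest.

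The main obstacle I expect is bookkeeping of the measure and of the finitely many places $p\mid N$: one must verify that with the specified normalizations of $f_p$ and $B_p$ at the ramified and Iwahori places the open compact subgroup $K_f$ is large enough to fix $\varphi$ yet the coset space $H(\Q)\backslash H(\A)/K_f\{\pm 1\}\SO(2)$ is exactly $\Gamma_0(N)\backslash\mathcal H$ (a single class, by strong approximation and $\det$ being surjective onto $\hat\Z^\times\R_{>0}$ since $L$ and the levels are chosen with this in mind); any discrepancy in class number or in local volumes would enter as an extra constant. Once the correspondence between the adelic and classical objects is pinned down, and one notes that the diagonal embedding $\tau\mapsto(\tau,\tau)$ matches $g_\infty=(g_1,g_1)$, the remaining computation is the routine unfolding of the automorphy factors. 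I would therefore present (i)–(iii) carefully and relegate (iv) to a remark that convergence follows exactly as in the proof that $Z(s,f,\varphi)$ is Eulerian (Proposition \ref{Basic-identity-prop}).
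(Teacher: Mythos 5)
Your proposal is correct and follows essentially the same route as the paper: strong approximation to identify $Z_H(\A)H(\Q)\backslash H(\A)/\SO(2,\R)K_0(N)$ with $\Gamma_0(N)\backslash\mathcal H$, followed by unwinding the automorphy factors so that $E(h,s,f)\bar\varphi(h)$ becomes $\mathcal E((\tau,\tau),s,f)\,\overline{\Phi(\tau)}\,y^{\ell}$, and then reading off the Petersson inner product from the definition. Your version is merely more explicit about measure normalization and convergence, which the paper leaves implicit.
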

\begin{proof}
The proposition follows from 
$$Z_H(\A) H(\Q) \backslash H(\A) / \SO(2,\R) K_0(N) \simeq Z_H(\R) \Gamma_0(N) \backslash H(\R)^+/\SO(2,\R) \simeq \Gamma_0(N) \backslash {\mathcal H},$$
and, for $h \in \SL(2, \R)$ and $h \langle i \rangle = \tau$, we have
$$E(h,s,f) \bar\varphi(h) = J(h,i)^{-\ell} {\mathcal E}((\tau, \tau), s, f) J(h, i)^{-\ell} \overline{\Phi(\tau)} = {\mathcal E}((\tau, \tau), s, f) \overline{\Phi(\tau)} y^{\ell}.$$
Here, $K_0(N)$ is defined in (\ref{K0-defn}) below.
\end{proof}

\subsection{Special cases arising from Tonghai Yang's paper}
In \cite{Yang}, Tonghai Yang has considered Hilbert Eisenstein series obtained from certain specific choices of the characters $\Omega_1$ and $\Omega_2$. Let us explain the setup of Theorem 1.2 of \cite{Yang}. Let us first remark that, in \cite{Yang}, an Eisenstein series is constructed on $\SL(2)$, whereas, in this paper, we are constructing Eisenstein series on $\GL(2)$. 

Let $L=\Q(\sqrt{D})$ be a real quadratic extension of $\Q$ and let $K$ be an imaginary quadratic extension of $L$. Let $\chi_{K/L}$ be the character of $L$ associated to the extension $K/L$. Let $\mathcal N$ be a square-free integral ideal of $L$ such that all its prime factors are inert in $K$. Let $\Omega_1 = \chi_{K/L}$ and $\Omega_2 = 1$. Let $N$ be a positive square-free integer such that $d_{K/L} \mathcal N \cap \Z = N \Z$ and $N'$ be an integer such that $d_{K/L} \cap \Z = N'\Z$. Here, $d_{K/L}$ is the discriminant of $K/L$. Let $\psi$ be the Hecke character corresponing to $\chi_{K/L}$. Let $E(g, s, f)$ be the Eisenstein series on $H(\A_L)$, with the section $f(\ast, s) \in I(\chi_{K/L}, 1, s)$ as in Section \ref{global-section}. Let ${\mathcal E}((\tau_1, \tau_2), s, f) := J(g_1, i) J(g_2, i) E(g, s, f)$ be the Eisenstein series on $\mathcal H^2$ as defined in (\ref{classical-Hilb-Eis-ser-defn}). Theorem 1.2 part 2) of \cite{Yang} states that, as a function of $(\tau_1, \tau_2)$, the Eisenstein series ${\mathcal E}((\tau_1, \tau_2), s, f)$ is a Hilbert modular form (non-holomorphic) of weight $(1,1)$, level $d_{K/L} \mathcal N$ and character $\psi$. Furthermore, part 3) of Theorem 1.2 in \cite{Yang} states that, when non-zero, the Eisenstein series ${\mathcal E}((\tau_1, \tau_2), 0, f)$ is holomorphic. 

Let $\Phi \in S_2(\Gamma_0(N), \psi)$ be a cusp form of weight $2$, level $N$ and nebentypus character $\psi$. Here, we have used the same notation for the Dirichlet character obtained by restriction  of $\psi$. Let $\omega$ be the character of $\Q^\times \backslash \A^\times$ corresponding to $\psi$. Note that $\omega = \chi_{K/L} |_{\A_\Q^\times}$. Let 
\begin{equation}\label{K0-defn}
K_0(N) := \prod\limits_{p < \infty} K_p(N), \text{ where } K_p(N) = \begin{cases} H(\Z_p) & \text{ if } p \nmid N;\\ H(\Z_p) \cap \mat{\Z_p}{\Z_p}{p\Z_p}{\Z_p} & \text{ if } p | N. \end{cases}
\end{equation}
Define the character $\lambda$ of $K_0(N)$ by
$$\lambda(\mat{a}{b}{c}{d}) := \prod\limits_{p | N} \omega_p^{-1}(d_p).$$
The function $\varphi : H(\A) \rightarrow \C$ corresponding to $\Phi$ is given by the formula
\begin{equation}\label{classical-adelic-defn}
\varphi(g) = \varphi(\gamma g_\infty k_0) :=  \lambda(k_0) \frac{\det(g_\infty)}{J(g_\infty, i)^2} \Phi(g_\infty \langle i \rangle).
\end{equation}
Here, using strong approximation, we have written $g = \gamma g_\infty k_0$, with $\gamma \in H(\Q), g_\infty \in \GL(2, \R)^+$ and $k_0 \in K_0(N)$. Assume that $\Phi$ is a Hecke eigenform. Let $\pi$ be the irreducible cuspidal automorphic representation of $H(\A_{\Q})$ generated by the right translates of $\varphi$. The central character of $\pi$ is given precisely by $\omega$. Assume that, for every $p \leq \infty$, the local representation $\pi_p$ has a $(S(D), (\chi_{K/L})_p)$-Waldspurger model. In Section 5 of \cite{Yang}, several special choices of $K$ and $L$ are made which automatically guarantee this local condition. Also assume that $L(\frac 12, {\rm BC}(\pi) \times \chi_{K/L}) \neq 0$. Theorem \ref{global-int-thm} and Proposition \ref{Petersson-norm-prop} gives us the following theorem.


\begin{theorem}\label{Classical-thm}
Let the notations be as above. Then we have
$$\langle {\mathcal E}|_{\Delta \mathcal H},  \Phi \rangle = i \pi D^{-\frac 12}  {\rm vol}(\Gamma_0(N) \backslash {\mathcal H}) \frac{L(1/2, \pi)}{L(1, \chi_{K/L})} \prod\limits_{p < \infty} Y_p(0),$$
where $Y_p(s)$ is the same as in the statement of Theorem \ref{global-int-thm}.
\end{theorem}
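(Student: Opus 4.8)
The plan is to obtain Theorem~\ref{Classical-thm} as a direct specialization of Theorem~\ref{global-int-thm} and Proposition~\ref{Petersson-norm-prop} to the arithmetic data of \cite{Yang}. First I would verify that Yang's choices are an instance of the general setup of Section~\ref{global-section}: the section $f$ is taken in $I(\chi_{K/L},1,s)$, so that $\Omega_1=\chi_{K/L}$ and $\Omega_2=1$; the relevant weight is $\ell=2$, since ${\mathcal E}((\tau_1,\tau_2),0,f)$ has weight $(1,1)$ and hence ${\mathcal E}|_{\Delta{\mathcal H}}$ has weight $2$; and the square-free integers $N$ and $N'$ defined via $d_{K/L}{\mathcal N}$ and $d_{K/L}$ are exactly the levels that enter Theorem~\ref{global-int-thm}. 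I would then check that the standing hypotheses of that theorem are met: $\pi_\infty={\mathcal D}(2)$, the local Waldspurger models all exist (this is the running assumption, and for the primes dividing $N/N'$ it is forced by the inertness of the prime factors of ${\mathcal N}$ in $K$, cf.\ \cite{FMP}), and $L(\tfrac12,{\rm BC}(\pi)\times\chi_{K/L})\neq0$ is assumed; together these yield a nonzero global $(S(D),\bar\Omega)$-Waldspurger model, so that $Z(s,f,\bar\varphi)$ is defined and the product formula of Theorem~\ref{global-int-thm} applies to it.

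Next I would evaluate the right-hand side of Theorem~\ref{global-int-thm} at $s=0$ for this data. The denominator becomes $L(1,\Omega_1\Omega_2^{-1})=L(1,\chi_{K/L})$. For the numerator, $\Omega_1|_{\A^\times}=\chi_{K/L}|_{\A^\times}=\omega$ is the central character of $\pi$, and since $\tilde\pi\cong\pi\otimes\omega^{-1}$ for a representation $\pi$ of $\GL_2(\A)$ with central character $\omega$, we get $\tilde\pi\times\omega\cong\pi$ and hence the numerator is $L(\tfrac12,\pi)$. Finally $Y_\infty(0)=iD^{-1/2}\pi$ by the $\ell=2$ case of Theorem~\ref{arch-split-thm}, while the finite factors $Y_p(0)$ are read off from Theorems~\ref{local-unram-int-thm}, \ref{stein-ram-int-thm} and \ref{stein-unramif-int-thm} exactly as in the statement. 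Putting these together gives $Z(0,f,\bar\varphi)=i\pi D^{-1/2}\,\dfrac{L(1/2,\pi)}{L(1,\chi_{K/L})}\prod_{p<\infty}Y_p(0)$.

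Finally I would invoke Proposition~\ref{Petersson-norm-prop} at $s=0$ --- which is legitimate because ${\mathcal E}((\cdot,\cdot),0,f)|_{\Delta{\mathcal H}}$ is holomorphic (Yang's Theorem~1.2(3)) and $\Phi$ is cuspidal, so the Petersson pairing converges --- to rewrite $Z(0,f,\bar\varphi)$ in terms of $\langle{\mathcal E}|_{\Delta{\mathcal H}},\Phi\rangle$ and solve for the latter. There is no genuine obstacle here: the theorem is a corollary of the earlier results, and the real work is bookkeeping. The points requiring care are matching the normalizations of $f$ and $\varphi$ used here (the weight-$(\ell/2,\ell/2)$ section \eqref{f-choice-split} and the adelic lift \eqref{classical-adelic-defn} of the classical form $\Phi$) with Yang's conventions, keeping track of the volume factor ${\rm vol}(\Gamma_0(N)\backslash{\mathcal H})$ and the archimedean constant, reconciling the fact that \cite{Yang} builds the Eisenstein series on $\SL_2$ while here it lives on $\GL_2$, and confirming that the local hypotheses imposed in Section~\ref{global-section} (split or suitably-ramified behavior of the primes over $N/N'$, unramifiedness elsewhere) are implied by the square-freeness and inertness conditions Yang places on ${\mathcal N}$.
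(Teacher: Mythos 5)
Your proposal is correct and follows the same route as the paper: the theorem is obtained by specializing Theorem \ref{global-int-thm} (with $\Omega_1=\chi_{K/L}$, $\Omega_2=1$, $\ell=2$, $s=0$, and the identification $L(\tfrac12,\tilde\pi\times\omega_\pi)=L(\tfrac12,\pi)$) and combining it with Proposition \ref{Petersson-norm-prop}. The paper's proof is exactly this one-line observation; your write-up simply spells out the bookkeeping.
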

We get the following corollary on non-vanishing of the Petersson inner product.
\begin{corollary}\label{classical-cor}
Let the notations be as above. Then, we have $\langle {\mathcal E}|_{\Delta \mathcal H},  \Phi \rangle \neq 0$ if and only if $L(1/2, \pi) \neq 0$ and $L(1/2, {\rm BC}(\pi) \times \chi_{K/L}) \neq 0$.
\end{corollary}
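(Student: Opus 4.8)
The plan is to deduce Corollary \ref{classical-cor} directly from Theorem \ref{Classical-thm} by examining when each factor on the right-hand side can vanish. The formula in Theorem \ref{Classical-thm} expresses $\langle {\mathcal E}|_{\Delta \mathcal H}, \Phi \rangle$ as a product of the constant $i\pi D^{-1/2}\,{\rm vol}(\Gamma_0(N)\backslash{\mathcal H})$ (which is manifestly nonzero), the ratio $L(1/2,\pi)/L(1,\chi_{K/L})$, and the finite product $\prod_{p<\infty} Y_p(0)$. So the first step is simply to observe that the inner product is nonzero if and only if $L(1/2,\pi)\neq 0$ and every local factor $Y_p(0)$ is nonzero; the denominator $L(1,\chi_{K/L})$ is a finite nonzero value of a Hecke $L$-function at the edge of the critical strip and contributes nothing to vanishing.

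The second step is to check that each $Y_p(0)$, as listed in Theorem \ref{global-int-thm}, is nonzero under the running hypotheses. For $p\nmid N$ this is trivial since $Y_p(0)=1$. For $p\mid N'$ we have $Y_p(0)=\frac{p-(\frac Lp)}{p+1}L_p(1,\Omega_1\Omega_2^{-1})$; the Euler factor $L_p(1,\Omega_1\Omega_2^{-1})$ is a nonzero number (its reciprocal $1-\Omega_1\Omega_2^{-1}(\varpi)p^{-1}$ or the split/ramified analogue is nonzero because $|\Omega_1\Omega_2^{-1}(\varpi)|=1$ so the quantity cannot be $1$... one should note $\Omega_1\Omega_2^{-1}$ restricted appropriately has absolute value $1$ at $p$ since the characters are unitary away from the archimedean place; alternatively one cites that these are specializations of convergent Euler products), and $p-(\frac Lp)\geq p-1>0$. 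For $p\mid N/N'$ the three displayed cases are each a nonzero constant times $\Omega_1(\cdot)^{-1}p^{\text{power}}$, and in the last case the extra denominator $1-\bar\chi_p(\varpi_p)^{-1}\Omega(1,\varpi_p)$ is nonzero precisely by the case hypothesis $\Omega(1,\varpi_p)\neq\bar\chi_p(\varpi_p)$. Hence $\prod_{p<\infty}Y_p(0)\neq 0$ unconditionally given the setup.

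The third step is to translate the two remaining nonvanishing conditions into the form stated in the corollary. From the previous two steps, $\langle{\mathcal E}|_{\Delta\mathcal H},\Phi\rangle\neq 0 \iff L(1/2,\pi)\neq 0$. But Theorem \ref{Classical-thm} was derived under the standing assumption $L(1/2,{\rm BC}(\pi)\times\chi_{K/L})\neq 0$ (this is exactly the hypothesis needed for the global $(S(D),\bar\Omega)$-Waldspurger model to be nonzero, cf. \cite{Wald85} and the discussion preceding Theorem \ref{global-int-thm}); if that central value vanishes then the Waldspurger period $B_\varphi$ is identically zero, so $Z(s,f,\bar\varphi)\equiv 0$ and hence the inner product vanishes as well by Proposition \ref{Petersson-norm-prop}. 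Conversely, if $L(1/2,{\rm BC}(\pi)\times\chi_{K/L})\neq 0$, Theorem \ref{Classical-thm} applies and the inner product is nonzero iff $L(1/2,\pi)\neq 0$. Combining the two directions gives: the inner product is nonzero $\iff$ $L(1/2,\pi)\neq 0$ and $L(1/2,{\rm BC}(\pi)\times\chi_{K/L})\neq 0$.

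I expect the only genuine subtlety to be the bookkeeping in the third step: being careful that the factorization $Z(s)=\prod_v Z_v(s)$ and the very applicability of Theorem \ref{Classical-thm} already presuppose $L(1/2,{\rm BC}(\pi)\times\chi_{K/L})\neq 0$, so one must handle the vanishing case separately rather than reading it off the product formula. The local computations in step two are routine once one notes all non-archimedean local characters in play are unitary, making each Euler-type factor automatically finite and nonzero.
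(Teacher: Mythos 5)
Your proof is correct and is essentially the argument the paper intends (the corollary is stated without proof, as an immediate consequence of Theorem \ref{Classical-thm}). Your explicit treatment of the case $L(1/2,{\rm BC}(\pi)\times\chi_{K/L})=0$ via vanishing of the Waldspurger period $B_\varphi$ and Proposition \ref{Petersson-norm-prop} is a worthwhile clarification, since Theorem \ref{Classical-thm} is proved under the standing assumption that this central value is nonzero, and your checks that $L(1,\chi_{K/L})$ and each $Y_p(0)$ are finite and nonzero (using unitarity of $\chi_{K/L}$ at finite places and the case hypotheses) are exactly what is needed.
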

Note that, by results of Friedberg and Hoffstein in \cite{FH}, given a $\pi$, one can obtain characters $\chi_{K/L}$ such that $L(1/2, {\rm BC}(\pi) \times \chi_{K/L}) \neq 0$.

%
%
%
%
%
%
%
%
\section*{Acknowledgements}
First and foremost, the authors would like to thank Yingkun Li for providing the original motivation for this project, as well as many helpful comments along the way. The authors would also like to thank Alok Shukla for helpful advice in evaluating the integral in Prop. \ref{arch-int-k1k2}.

\end{document}